\newtheorem{theorem}{Theorem}[section]
\newtheorem{lemma}[theorem]{Lemma}
\newtheorem{prop}[theorem]{Proposition} 
\theoremstyle{definition}
\theoremstyle{remark}
\newtheorem{remark}[theorem]{Remark}
\numberwithin{equation}{section}
\newcommand{\ind}[1]{\mathbf{1}_{\{#1\}}}
\newcommand{\E}{\mathbb{E}} 
\newcommand{\p}{\mathbb{P}} 
\newcommand{\R}{\mathbb{R}} 
\DeclareMathOperator{\Med}{Med}	
\title{On Orlicz spaces satisfying the Hoffmann-Jørgensen inequality}
\author{Rados{\l}aw Adamczak}
\address{Institute of Mathematics, University of Warsaw, ul. Banacha 2, 02-097 Warszawa, Poland}
\email{r.adamczak@mimuw.edu.pl}
\author{Dominik Kutek}
\address{Institute of Mathematics, University of Warsaw, ul. Banacha 2, 02-097 Warszawa, Poland}
\email{d.kutek@mimuw.edu.pl}
\date{}
\keywords{Orlicz norm, Banach space valued random variables, Hoffmann-Jørgensen inequality, Talagrand inequality}
\subjclass{Primary 60E15, 60B11, 46E30}
\begin{document}
\maketitle

\begin{abstract}
Building on Talagrand's proof of the Hoffmann-Jørgensen inequality for $L_p$ spaces and its version for the exponential Orlicz spaces we provide a full characterization of Orlicz functions $\Psi$ for which an analogous inequality holds in the Orlicz space $L_\Psi(F)$, where $F$ is an arbitrary Banach space.

As an application we present a characterization of Talagrand-type concentration inequality for suprema of empirical processes with envelope in $L_\Psi$ (equivalently for sums of independent $F$-valued random variables in $L_\Psi(F)$). This result generalizes in particular an inequality by the first-named author concerning exponentially integrable summands and a recent inequality due to Chamakh--Gobet--Liu on summands with $\beta$-heavy tails. Another corollary concerns concentration for convex functions of independent, unbounded random variables, generalizing recent results due to Klochkov--Zhivotovskiy and Sambale.

We also obtain a corollary concerning boundedness in $L_\Psi(F)$ of partial sums of a series of independent random variables, generalizing the original result by Hoffmann-Jørgensen.
\end{abstract}

\section{Introduction}

In the 1970s Jørgen Hoffmann-Jørgensen \cite{MR0356155} established moment inequalities, which over the years have become an important tool in functional analysis and probability theory, with applications to integrability properties of sums of independent random variables as well as stochastic processes with independent increments, convergence of series of independent variables, probabilistic characterizations of certain classes of Banach spaces, the theory of empirical processes and concentration of measure \cite{MR1102015,MR1385671,MR1857312,MR2123200,MR2434306,MR2424985,chamakh:hal-03175697}.
The inequalities are of the form
\begin{align}\label{eq:HJ-mean}
\Big\|\sum_{i=1}^N X_i \Big\|_p \le D_p \Big( \Big\|\sum_{i=1}^N X_i \Big\|_1 + \Big\|\max_{i\le N} \|X_i\|\Big\|_p\Big),
\end{align}
where $X_1,\ldots, X_N$ are independent random variables with values in a Banach space $(F, \|\cdot\|)$ and $p\ge 1$.

In fact, Hoffmann-Jørgensen proved a bound of the form
\begin{align}\label{eq:HJ-quantile}
\Biggr\|\max_{n\le N} \Big\|\sum_{i=1}^n X_i\Big\| \Biggr\|_p \le \widetilde{D}_p \Big(t_0 + \Big\|\max_{i\le N} \|X_i\|\Big\|_p\Big),
\end{align}
 where $t_0$ is an appropriate quantile of  $\max_{n\le N} \|X_1+\ldots+X_n\|$, more precisely
\begin{displaymath}
 t_0 = \inf\{t>0\colon \p(\max_{n\le N} \|X_1+\ldots+X_n\| > t) \le (2\cdot 4^p)^{-1}\}
\end{displaymath}
(see \cite[Proposition 6.8]{MR1102015} for this formulation).

Given the inequality \eqref{eq:HJ-quantile} for $p=1$, it is not difficult to show that the estimates \eqref{eq:HJ-mean} and \eqref{eq:HJ-quantile} are formally equivalent up to the value of the constants $D_p, \widetilde{D}_p$ (see the proof of Theorem \ref{thm:boundedness} below). For this reason we will refer to both of them as to Hoffmann-Jørgensen inequalities.

The constants $D_p$ obtained in \cite{MR0356155} depend exponentially on $p$. It was proven by Michel Talagrand \cite{MR1048946} that as $p \to \infty$ the optimal constants grow like $\frac{p}{\log p}$ (see also the work \cite{MR0770640} by Johnson, Schechtman and Zinn, where this statement is proved for the real-valued case of related Rosenthal's inequality). Talagrand's argument relied on a novel isoperimetric approach, which opened the way to many new concentration results in product spaces.
In particular, Talagrand obtained extensions of the inequality \eqref{eq:HJ-mean} from $L_p$ spaces to exponential Orlicz spaces, which we will now briefly review. Recall that a function $\Psi \colon [0,\infty) \to [0,\infty)$ is called an Orlicz function\footnote{We remark that various naming conventions exist, sometimes the functions we consider are called Young functions or $N$-functions, moreover all three names appear in the literature with subtle differences in definitions (sometimes one imposes additional conditions at zero, allows the value $\infty$, relaxes the strict monotonicity or even convexity conditions). For simplicity we will use the name \emph{Orlicz function} in analogy with \emph{Orlicz space}.} if it is convex, strictly increasing and $\Psi(0) = 0$. For such a function and an $F$-valued random variable $X$ we define
\begin{align}\label{eq:Orlicz-norm}
\|X\|_{\Psi} = \inf\{a> 0\colon \E \Psi(\|X\|/a) \le 1\}.
\end{align}

One then shows that for a probability space $(\Omega,\mathcal{F},\p)$, $\|\cdot\|_\Psi$ defines a norm on the space $L_\Psi(F)$ of all random variables $X\colon \Omega \to  F$, such that $\|X\|_{\Psi} < \infty$. Moreover, $(L_\Psi(F),\|\cdot\|_\Psi)$ is a Banach space. In \cite{MR1048946} Talagrand considered the function $\Psi_1(x) = e^x - 1$ and its generalizations $\Psi_\alpha(x) = e^{x^\alpha} - 1$ for $\alpha > 0$ (when $\alpha \in (0,1)$, $\Psi_\alpha$ is not an Orlicz function, as it is not convex, and $\|\cdot\|_{\Psi_\alpha}$ defined by \eqref{eq:Orlicz-norm} is only a quasi-norm, however $\Psi_\alpha$ can be modified for small values of $x$ to become an Orlicz function and such a modification leads to a norm equivalent to $\|\cdot\|_{\Psi_\alpha}$; for this reason in the literature one often refers to $\|\cdot\|_{\Psi_\alpha}$ as a norm). We will call the norm $\|\cdot\|_{\Psi}$ the Orlicz norm associated with $\Psi$, as it is common in the probabilistic literature (see, e.g., \cite{MR1857312,MR3101846,MR3707425}). We note, however, that in functional analysis it is often referred to as the Luxemburg norm, with the name Orlicz being used for another norm, defined by duality and equivalent to $\|\cdot\|_\Psi$ up to a multiplicative constant 2.

Talagrand proved that if $\Psi = \Psi_\alpha$ and $\alpha \in (0,1]$, then there exists $D = D(\alpha)$, depending only on $\alpha$, such that for all independent $F$-valued random variables $X_1,\ldots,X_N$,
\begin{align}\label{eq:HJTal}
\Big\|\sum_{i=1}^N X_i \Big\|_{\Psi} \le D\Big( \Big\|\sum_{i=1}^N X_i \Big\|_1 + \Big\|\max_{i\le N} \|X_i\|\Big\|_{\Psi}\Big).
\end{align}

He also provided a characterization of Orlicz functions $\Psi$ of the form $\Psi(x) = x\xi(x)$, where $\xi$ is increasing for large $x$, which satisfy \eqref{eq:HJTal}. It turns out that \eqref{eq:HJTal} holds iff there exists $L$ such that for all $u$ large enough $\xi(e^u) \le L\xi(u)$. Formally, Talagrand's result covers random variables with mean zero, but it is easy to see that this assumption can be dropped at a cost of increasing $D$ by a constant depending only on $\Psi$ (see the proof of Theorem \ref{thm:characterization} below).

Recently, the interest in inequalities of the form \eqref{eq:HJTal} was revived in the context of empirical processes and statistical learning without exponential moments. Chamakh, Gobet and Liu in \cite{chamakh:hal-03175697} derived \eqref{eq:HJTal} for $\Psi(x) = \exp(\log^\beta(1+x)) - 1$, where $\beta \ge 1$. This corresponds to the integrability of log-normal variables or more generally exponentials of Weibull variables.

In view of these results it is natural to ask whether there exists a simple characterization of all functions $\Psi$, which satisfy the inequality \eqref{eq:HJTal} for all Banach spaces $F$ and sequences of independent $F$-valued random variables.

The main result of this article gives an affirmative answer to this question, the full characterization is provided in Theorem \ref{thm:characterization} below. It allows us to obtain refined concentration inequalities for unbounded random variables. In particular, we generalize inequalities from \cite{MR2424985,chamakh:hal-03175697,MR2591905} and obtain estimates for suprema of empirical processes (equivalently, sums of independent $F$-valued random variables) in $L_\Psi$,  involving the weak variance as the coefficient in the subgaussian part of the estimate, as in the by now classical inequality due to Talagrand in the bounded case \cite{MR1419006}. As another corollary we derive concentration inequalities for convex Lipschitz functions of random vectors distributed according to product measures with unbounded support, extending results from \cite{MR4073683,10.1007/978-3-031-26979-0_7}. We also provide applications concerning Orlicz integrability of series of independent random variables in Banach spaces, generalizing the results by Hoffmann-Jørgensen \cite{MR0356155}.

The organization of the article is as follows. After introducing the notation and preliminary facts (Section 2) we formulate our main results concerning characterization of the Hoffmann-Jørgensen inequality (Section \ref{sec:main-results}) followed by the aforementioned applications to concentration of measure and probability in Banach spaces (Section \ref{sec:applications}). The proofs of all results are presented in Section \ref{sec:proofs}.

\section{Notation and preliminaries \label{sec:notation}}

In what follows by $C,c$ we will denote constans which are either universal or depend only on the function $\Psi$ or some parameters related to it, but do not depend on the Banach space $F$, the law of the variables $X_i$ or their number $N$. The set of parameters on which the constant may depend will be explained in the text. Sometimes, to stress the dependence we will write, e.g., $C_p, c_\Psi$. The value of such constants may change between occurrences.

To simplify the notation, with an Orlicz function $\Psi\colon[0,\infty) \to [0,\infty)$ we will always associate a function $\psi\colon [0,\infty) \to [0,\infty)$, defined by the equality
\begin{displaymath}
  \Psi(x) = e^{\psi(x)} - 1, \; x\ge 0.
\end{displaymath}
It follows from the definition of the Orlicz function that $\psi$ is increasing, continuous, $\psi(0) = 0$, $\lim_{x\to\infty}\psi(x) = \infty$. Moreover, there exists $c = c_{\Psi} > 0$ such that for large $x$, $\psi(x) \ge \ln(1+cx)$.

For a Banach space $(F,\|\cdot\|)$, by an $F$-valued random variable on a probability space $(\Omega,\mathcal{F},\p)$ we mean a function $X\colon \Omega \to F$, which is $\mathcal{B}(F)/\mathcal{F}$ measurable, where $\mathcal{B}(F)$ is the Borel $\sigma$-field generated by the norm $\|\cdot\|$. An $F$-valued random variable is called Radon if it takes values in a separable subspace of $F$. We refer to \cite{MR1102015} for basic properties of Radon random variables. To avoid technical measurability issues, in what follows we will restrict our attention to Radon variables. In particular, we will use the phrases \emph{Radon random variable} and \emph{random variable} interchangeably.

For an Orlicz function $\Psi$, by $L_\Psi(F)$ we will denote the space of all $F$-valued Radon random variables, such that $\|X\|_{\Psi} < \infty$ (where $\|X\|_{\Psi}$ is defined in \eqref{eq:Orlicz-norm}). It is well known that $(L_{\Psi}, \|\cdot\|_{\Psi})$ is a Banach space. In particular, for $\Psi(x) = x^p$ we have $\|X\|_{\Psi} = \|X\|_p = (\E \|X\|^p)^{1/p}$ and $L_{\Psi}(F) = L_p(F)$. In what follows we will most of the time suppress $F$ from the notation and write simply $L_p$ or $L_\Psi$ instead of $L_p(F)$, $L_{\Psi}(F)$. The meaning of this short-hand notation will be clear from the context. We refer to the monographs \cite{MR0126722,MR1113700} for an overview of the theory of Orlicz spaces.

\section{Main results \label{sec:main-results}}

\subsection{Orlicz functions satisfying the Hoffmann-Jørgensen inequality.}\label{Sec:characterization}

Let $\Psi$ be an Orlicz function. Recall that the function $\psi\colon [0,\infty)\to [0,\infty)$ is defined via $\Psi(x) = e^{\psi(x)} - 1$. We will say that $\Psi$ satisfies the condition \eqref{eq:nsc} if there exists a constant $K>0$ such that for every $s,u \ge K$,
\begin{align}\label{eq:nsc}\tag{HJ}
\psi(su) \le K(s\ln (1+s) + s\psi(u)).
\end{align}

\begin{remark}\label{re:nsc1} One can easily see that the condition \eqref{eq:nsc} is equivalent to the existence of $K' > 0$ such that for all $s,u \ge K'$,
\begin{align}\label{eq:nsc1}\tag{HJ'}
\Psi(su) \le (s\Psi(u))^{K's}.
\end{align}
\end{remark}

It turns out that the condition \eqref{eq:nsc} provides a full characterization of the Hoffmann-Jørgensen inequality. More precisely, the following theorem holds.

\begin{theorem}\label{thm:characterization}
If the Hoffmann-Jørgensen inequality \eqref{eq:HJTal} holds for all sequences of i.i.d., symmetric, real valued, bounded random variables, then $\Psi$ satisfies \eqref{eq:nsc}.
Conversely, if \eqref{eq:nsc} holds, then there exists $D<\infty$, depending only on $\Psi$, such that the inequality \eqref{eq:HJTal} holds for all sequences of independent Banach space valued Radon random variables.
\end{theorem}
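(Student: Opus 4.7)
The plan is to exhibit, for each large $u$, an i.i.d.\ sequence of symmetric bounded real-valued variables that probes $\psi$ at all pairs $(su,u)$ with $s$ large. A natural candidate is $X_i=A\epsilon_i Y_i$ for $i\le N$, where $\epsilon_i$ are Rademacher, $Y_i$ are Bernoulli$(p)$, and the parameters are set to $A=u^2$ and $p=1/(N\Psi(u))$, with $N$ chosen large. One checks directly that $\|M_0\|_\Psi\le u$ for $M_0=\max_i|X_i|$ (from $\E\Psi(M_0/u)=\p(M_0=A)\Psi(u)\le Np\Psi(u)=1$) and that $\|S\|_1\le u$ (since $\|S\|_2=u^2/\sqrt{\Psi(u)}\le u$ using the eventual lower bound $\psi(u)\ge 2\log u$, which is automatic for $u$ large enough in the regime where \eqref{eq:nsc} could fail). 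The assumed inequality \eqref{eq:HJTal} then forces $\|S\|_\Psi\le 2Du$, while a Poisson approximation for $\sum Y_i$ together with the Rademacher concentration on the signs gives the lower tail bound $\p(|S|=kA)\ge c(2\Psi(u))^{-k}/k!$ for every $k\ge 1$. Combining these, substituting $A=u^2$, and taking logarithms yields
\[
\psi\bigl(ku/(2D)\bigr)\le k\psi(u)+k\log(2k/e)+O(\log k),
\]
which, upon setting $s=k/(2D)$ and enlarging the constant, is precisely \eqref{eq:nsc}.

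\textbf{Sufficiency.} By the standard symmetrization argument (pass to the differences $X_i-X_i'$ for independent copies $X_i'$ and control the arising shift by $\|S\|_1$), it suffices to treat the symmetric case. The plan is then to control $\|S\|_\Psi$, or more conveniently $\|M\|_\Psi$ for $M=\max_{n\le N}\|\sum_{i\le n}X_i\|$, by iterating the classical Hoffmann--J\o rgensen tail inequality $\p(M>2t+s)\le 4\p(M>t)^2+\p(M_0>s)$ starting from the quantile $t_0$ of $M$ at level $1/16$ (which satisfies $t_0\lesssim\|S\|_1$ in the symmetric case) and with a sequence of scales $s_j$ chosen to balance the two terms, so that the successive tail probabilities satisfy $\p(M>t_k)\lesssim 2^{-2^k}$ and the $s_j$ admit estimates in terms of $\|M_0\|_\Psi$ through $\psi^{-1}$. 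To conclude $\|M\|_\Psi\le K(t_0+\|M_0\|_\Psi)$ one integrates the iterated tail estimate against $\Psi'$; the hypothesis in its equivalent multiplicative form \eqref{eq:nsc1}, $\Psi(su)\le(s\Psi(u))^{Ks}$, is exactly what is needed to ensure that $\Psi$ applied to the iterated scales does not overwhelm the doubly exponential tail decay, so the resulting series is summable and the constants depend only on $\Psi$.

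\textbf{Main obstacle.} The technically delicate step lies in the sufficiency direction: the growth of $\Psi$ under multiplicative dilation must compensate the accumulation of scales produced by the iteration, and this matching is precisely captured by \eqref{eq:nsc1}. Verifying that the final constant depends only on $\Psi$, and not on the Banach space $F$, the number $N$, or the distribution of the variables, requires careful bookkeeping of how the tail bounds at each level interact with the $\Psi$-moments---any loss at this stage would only yield a version of \eqref{eq:HJTal} with a strictly larger right-hand side, and it is the exact form of \eqref{eq:nsc} that produces a constant of the correct order, consistent with the counterexample in the necessity direction.
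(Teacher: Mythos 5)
Your idea here is close to the paper's, but you complicate it and introduce a slip. The paper takes the simpler i.i.d.\ law $\p(X_j=\pm u)=(2N\Psi(u))^{-1}$, $\p(X_j=0)=1-(N\Psi(u))^{-1}$, checks $\|\max_j|X_j|\|_\Psi\le 1$ and $\|\sum X_j\|_1\le u/\Psi(u)\le 1/c$ (by convexity of $\Psi$), deduces $\|\sum X_j\|_\Psi\le C_\Psi$ from \eqref{eq:HJTal}, and then tests against the single atom $\{|\sum X_j|=Nu\}$, which has probability at least $(2N\Psi(u))^{-N}$; no Poisson approximation is needed. In your version the bound $\|S\|_1\le u$ is the weak link: you pass through $\|S\|_2=u^2/\sqrt{\Psi(u)}$ and then claim $\psi(u)\ge 2\log u$ ``is automatic where \eqref{eq:nsc} could fail.'' That is not true --- $\psi(u)\ge 2\log u$ can fail along a sequence $u\to\infty$ even when $\Psi$ has superpolynomial $\limsup$ growth (compare the piecewise-affine construction in Proposition~\ref{prop:counterexample}, where on long affine stretches $\Psi(u)/u^2\to 0$). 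The fix is elementary: bound $\|S\|_1$ directly by $A\cdot Np=u^2/\Psi(u)\le Cu$ using $\Psi(u)\ge cu$ (convexity). With that correction your necessity argument works and is essentially the paper's, just reparametrized.

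\textbf{Sufficiency.} This is where your proposal breaks down, and the gap is not a matter of bookkeeping. You propose to iterate the classical Hoffmann--J\o rgensen tail inequality $\p(M>2t+s)\le 4\p(M>t)^2+\p(M_0>s)$ and integrate against $\Psi'$. This scheme can only succeed when $\Psi$ has at most polynomial growth, and the paper explicitly discusses this: Ziegler's argument, which is exactly your iteration, gives \eqref{eq:HJTal} only under the $\Delta_2(\infty)$--condition, which forces polynomial growth. But \eqref{eq:nsc} covers $\Psi_1(x)=e^x-1$ and all $\Psi_\alpha$, $\alpha\in(0,1]$, and for these the iteration provably fails. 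To see this concretely for $\Psi_1$ (so $\psi(x)=x$): the iteration $p_{k+1}\le 4p_k^2+q_k$ forces $p_k\lesssim e^{-c\,2^k}$, hence the scales must satisfy $\p(M_0>s_j)\lesssim e^{-c\,2^j}$, i.e.\ $s_j\gtrsim\|M_0\|_\Psi\,\psi^{-1}(c\,2^j)=c\|M_0\|_\Psi\,2^j$, and then $t_k=\sum_{j<k}2^{k-1-j}s_j\gtrsim\|M_0\|_\Psi\,k\,2^k$. The resulting sum $\sum_k\Psi\bigl(t_k/(a\|M_0\|_\Psi)\bigr)p_k\gtrsim\sum_k\exp\bigl(2^k(k/a-c)\bigr)$ diverges for \emph{every} $a$; the factor $k$ in $t_k$ is fatal. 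Equivalently, the classical iteration gives $L_p$--constants $D_p$ that grow exponentially in $p$, whereas $\|\cdot\|_{\Psi_1}\simeq\sup_p\|\cdot\|_p/p$ requires $D_p=O(p)$, and in fact $D_p\asymp p/\log p$ is optimal. This is precisely why Talagrand's isoperimetric method is indispensable. The paper's actual engine is Lemma~\ref{le:crucial-lemma} (built on Theorem~\ref{thm:q-point}, ``approximation by $q$ points'') applied with $q=q(t)$ and $k=k(t)$ chosen adaptively as in \eqref{eq:q-k-definition}, together with a careful union bound over the joint large values of the order statistics $Y_{2^{p_0}},\dots,Y_{2^p}$; condition \eqref{eq:nsc} is used not to tame an iterated doubling, but to compare $\psi$ evaluated at the sum $\sum_j 2^{j-p_0}a_j$ with the individual $\psi(a_i)$'s. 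Your ``main obstacle'' paragraph correctly localizes the difficulty but proposes a tool that cannot resolve it.
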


\subsection{Examples\label{sec:examples}}
Let us now provide several examples of Orlicz functions satisfying the Hoffmann-Jørgensen inequality.

Observe first that if $\Psi(x) = e^{\psi(x)} - 1$ and there exists $x_0$ such that $\psi$ is concave on $[x_0,\infty)$, then \eqref{eq:nsc} is clearly satisfied, since by concavity for $s, u \ge \max(1,2x_0)$, we have
\begin{displaymath}
  \psi(su) \le \psi(x_0) + \frac{su-x_0}{u-x_0}(\psi(u) - \psi(x_0)) \le \psi(x_0) + 2s\psi(u) \le 3s\psi(u).
\end{displaymath}

In particular, the functions $\Psi_\alpha(x) = e^{x^\alpha} - 1$ for $\alpha \le 1$, considered by Talagrand in \cite{MR1048946}, satisfy \eqref{eq:nsc}. As mentioned in the introduction, for $\alpha < 1$ they are not convex, but can be modified to Orlicz functions $\widetilde{\Psi}_\alpha$ in such a way that the norm $\|\cdot\|_{\widetilde{\Psi}_\alpha}$ is equivalent to the quasi-norm $\|\cdot\|_{\Psi_\alpha}$.

The above observation covers also the functions
\begin{align}\label{eq:beta-Orlicz}
\Psi_{\beta}^{HT}(x) = e^{\ln^\beta(x+1)} - 1, \; \beta \ge 1,
\end{align}
considered recently by Chamakh, Gobet and Liu \cite{chamakh:hal-03175697}. This class of functions is of interest, since the finiteness of $\|X\|_{\Psi_\beta^{HT}}$ implies the existence of all moments, but does not imply the existence of exponential moments of any order, so the tails of $X$ may be considered moderately heavy. In particular, the case $\beta = 2$ captures the log-normal distribution, and typical examples one may have in mind in general are exponentials of Weibull variables. We refer to \cite{chamakh:hal-03175697} for more examples and applications to statistical learning based on heavy-tailed observables.

Let us observe that the above examples can be also extended to functions $\Psi(x) = e^{\psi(x)} - 1$, where $\psi(x) = x^\alpha f(x)$, $\alpha \in (0,1)$ or $ \psi(x) = \log^\beta(1+x)f(x)$, $\beta \ge 1$, and $f(x)$ is any finite product of iterated logarithms, as such functions satisfy $\psi(su) \le s\psi(u)$ for $s,u$ large enough and thus the condition \eqref{eq:nsc}.

Setting $\psi(x) = \log(1+x^p)$ we get $\Psi(x) = x^p$, and in particular we recover the original inequalities \eqref{eq:HJ-mean} due to Hoffmann-Jørgensen. We note that an argument based only on the condition $\eqref{eq:nsc}$ does not allow to recover the optimal growth of constants $D_p$, since the condition \eqref{eq:nsc} does not capture the full regularity of the function $\psi$ in this case, i.e., the fact that $\psi(su)$ for $u$ and $s$ large may be bounded by a much smaller quantity than the right hand side of \eqref{eq:nsc}.

In \cite{MR1464686} Ziegler, using the original proof of Hoffmann-Jørgensen, obtained \eqref{eq:HJTal} for $\Psi$ satisfying the $\Delta_2(\infty)$--condition
\begin{displaymath}
  \Psi(2u) \le C\Psi(u)
\end{displaymath}
for some $C < \infty$ and all $u$ large enough (we remark that this is a very common assumption in the theory of Orlicz spaces, known to be equivalent to many functional-analytic and probabilistic properties of the space $L_\Psi$). Clearly, if $\Psi$ satisfies this property, then there exist constants $K,p$ such that $\Psi(x) \le K x^p$ for $x > K$. It turns out that this growth condition on $\Psi$, weaker than the $\Delta_2(\infty)$-condition, is actually sufficient for \eqref{eq:nsc} and consequently for \eqref{eq:HJTal} to hold. Indeed, by convexity of $\Psi$, there exists $c > 0$ such that $\Psi(x) \ge cx$ for $x$ large enough, and so for some constant $C$ and $u,s$ large enough,
\begin{displaymath}
\frac{1}{2}\psi(us) \le \ln (\Psi(us)) \le p\ln(us) + \ln(K) \le C s\ln (u),
\end{displaymath}
which clearly implies \eqref{eq:nsc}. We thus obtain a generalization of Ziegler's result (note that there are Orlicz functions of polynomial growth which do not satisfy the $\Delta_2$ condition). We remark that Ziegler worked in fact in a more general setting of (not necessarily measurable) suprema of empirical processes and outer expectations, while we restrict our attention to Radon random variables. One should be able to establish our results also in Ziegler's setting, this would however require additional technicalities related mostly to the lack of measurability and the need to work with outer integrals.

 In view of the last class of examples, one may ask if there is some function $\Phi$ of superpolynomial growth such that the Hoffman-Jørgensen inequality holds for all Orlicz functions $\Psi$ satisfying $\Psi(x) \le \Phi(x)$ for $x$ large enough. This turns out not to be the case as explained in the following proposition, whose proof is deferred to Section \ref{sec:proofs-main-results}.

\begin{prop}\label{prop:counterexample}
Assume that $\Phi$ is an Orlicz function such that $\limsup_{x \to +\infty} \frac{\Phi(x)}{x^p} = +\infty$ for all $p > 0$. Then, there exists an Orlicz function $\Psi$, such that $\Psi(x) \le \Phi(x)$ for all $x$ large enough and $\Psi$ does not satisfy the condition \eqref{eq:nsc}.
\end{prop}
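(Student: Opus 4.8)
The plan is to build $\Psi$ as a piecewise-affine, convex function that is linear with slowly increasing slopes on most of $[0,\infty)$ but performs short, very steep ``jumps'' at a sparse sequence of scales $u_1<u_2<\cdots\to\infty$, each jump cashing in on the superpolynomial growth of $\Phi$. The function will stay below $\Phi$ for all large arguments while satisfying $\psi(s_ku_k)\gg s_k\psi(u_k)$ along a sequence with $s_k,u_k\to\infty$, where $s_k:=k+1$ and $\psi(x)=\ln(1+\Psi(x))$ as elsewhere in the paper; by the negation of \eqref{eq:nsc} (or of \eqref{eq:nsc1}, cf.\ Remark~\ref{re:nsc1}) this already exhibits the failure of the condition. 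Concretely, on each $[u_k,s_ku_k]$ we let $\Psi$ be affine from $(u_k,\Psi(u_k))$ to $(s_ku_k,\Phi(u_k))$, on each $[s_ku_k,u_{k+1}]$ we continue affinely with the same slope $m_k$, and we set $\Psi(x)=x$ on $[0,u_1]$.

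The crux is to guarantee that $\Psi$ stays under $\Phi$ forever even though it jumps up at every scale. For this I would first record two elementary facts about a convex $\Phi$ with $\Phi(0)=0$: the map $x\mapsto\Phi(x)/x$ is non-decreasing, so $\Phi(su)\ge s\Phi(u)$ for $s\ge1$; and the slope of any chord of $\Phi$ over $[a,b]$ is at most the right derivative $\Phi'_+(b)$, while $\Phi$ lies above its tangent line at $b$. Combining these yields the \emph{slope comparison} that drives the construction: $m_k\le\Phi(u_k)/((s_k-1)u_k)\le\Phi(u_k)/u_k\le\Phi'_+(s_ku_k)$, the last step using $\Phi(s_ku_k)\ge s_k\Phi(u_k)$; since moreover $\Psi(s_ku_k)=\Phi(u_k)\le\Phi(s_ku_k)$, continuing $\Psi$ affinely past $s_ku_k$ with slope $m_k$ keeps $\Psi\le\Phi$ on $[s_ku_k,u_{k+1}]$. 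This is precisely why the jumps must be multiplicatively \emph{short} (the ratio $s_ku_k/u_k=s_k$ is only linear in $k$): with a large ratio the post-jump slope of $\Psi$ would be too steep to fit under $\Phi$, and the term $s_k\psi(u_k)$ would be too big for $\psi(s_ku_k)$ to dominate.

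I would then choose the scales recursively. Fix a fast-growing exponent sequence, say $p_k=k^3$ (with $p_1$ enlarged to $2$). Given $u_1,\dots,u_{k-1}$, the hypothesis $\limsup_{x\to\infty}\Phi(x)/x^{p_k}=\infty$ provides arbitrarily large $x$ with $\Phi(x)\ge x^{p_k}$, so we may pick such a $u_k$ that also exceeds a single explicit threshold, e.g.\ $u_k\ge 4\Phi(u_{k-1})$; this keeps the scales and jumps ordered, forces $\Psi(u_k)\le\tfrac12\Phi(u_k)$, and ensures $\ln u_k\ge L_{k-1}:=\ln(1+\Phi(u_{k-1}))$. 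With $\Psi$ defined as above (and the obvious handling of $k=1$), the verification is a short sequence of checks. (i) $\Psi$ is an Orlicz function: its slopes are positive and non-decreasing, the only substantive point being $m_k\ge m_{k-1}$, which holds because $\Phi(u_k)\ge u_k^{p_k}$ makes $m_k$ enormous while $m_{k-1}\le\Phi(u_{k-1})/u_{k-1}$ is controlled by the threshold on $u_k$. (ii) $\Psi(x)\le\Phi(x)$ for $x\ge u_1$: on each jump interval $\Psi\le\Phi(u_k)\le\Phi$, and on each affine continuation the slope comparison above gives $\Psi\le\Phi$; on $[0,u_1)$ nothing is required. (iii) $\Psi$ violates \eqref{eq:nsc}: from $\Psi(u_k)\le 2\Phi(u_{k-1})u_k/u_{k-1}$ one gets $\psi(u_k)\le L_{k-1}+\ln u_k+O(1)$, whereas $\psi(s_ku_k)=\ln(1+\Phi(u_k))\ge p_k\ln u_k$, so that $\ln u_k\ge L_{k-1}$ and $p_k=k^3$ give, for all large $k$, $\psi(s_ku_k)>k\,s_k\bigl(\ln(1+s_k)+\psi(u_k)\bigr)$; since $s_k,u_k\to\infty$, this rules out every candidate constant in \eqref{eq:nsc}.

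The main obstacle I anticipate is the slope-comparison step together with the bookkeeping it forces: $u_k$ must be taken large enough relative to all previously fixed data that the jump fits under $\Phi$ and can be continued, while the exponent $p_k$ must grow fast enough that the jump is registered by $\psi$, i.e.\ that $\psi(s_ku_k)$ dwarfs $s_k\psi(u_k)$. Both are possible only because the hypothesis on $\Phi$ lets us place $u_k$ arbitrarily far out at each step; once that is granted, all the constraints imposed at stage $k$ depend only on $u_1,\dots,u_{k-1}$, and the recursion goes through.
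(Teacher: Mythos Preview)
Your argument is correct and follows essentially the same route as the paper: an inductive piecewise-affine construction of $\Psi$ below $\Phi$, with a sparse sequence of scales $u_k\to\infty$ chosen so that the jump over $[u_k,s_ku_k]$ is large enough to violate \eqref{eq:nsc} (equivalently \eqref{eq:nsc1}) for every candidate constant. The only cosmetic differences are that the paper works with the multiplicative form \eqref{eq:nsc1} and takes the slope on $[u_n,u_{n+1}]$ to be $\Phi'(u_n+)$, whereas you work directly with \eqref{eq:nsc} and fix the slope by prescribing $\Psi(s_ku_k)=\Phi(u_k)$; your slope-comparison step $m_k\le\Phi(u_k)/u_k\le\Phi'_+(s_ku_k)$ plays the same role as the paper's condition~(iv).
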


\section{Applications \label{sec:applications}}
\subsection{Talagrand-type inequalities involving weak variance\label{sec:weak-variance}}

Inequalities of Hoffmann-Jørgensen type provide strong integrability for sums of Banach space valued random variables, however in general do not capture fully their concentration properties. Consider for instance a situation, in which $X_1,X_2,\ldots,$ are i.i.d. and $S_N = X_1+\ldots+X_N$, normalized by $\sqrt{N}$ converges in distribution to an $F$-valued Gaussian random variable $G$ (we will not discuss here conditions under which such convergence holds and refer the reader to the classical monographs \cite{MR0576407,MR1102015}). Thanks to the Gaussian concentration inequality (see, e.g., \cite{MR1849347,MR1102015}), the limiting variable satisfies
\begin{displaymath}
  \p\Big(\Big|\|G\|- \E \|G\|\Big|\ge t \Big) \le 2\exp\Big(-\frac{t^2}{2\sigma^2}\Big),
\end{displaymath}
where $\sigma^2 = \sup_{f \in F^\ast, \|f\|\le 1} \E f(G)^2$ is the \emph{weak variance} of $G$ ($F^\ast$ is the dual of $F$, for simplicity we denote the dual norm in $F^\ast$ also by $\|\cdot\|$). In analogy with one-dimensional Bernstein and Bennett's inequalities (see, e.g., chapters 2.7, 2.8 in the monograph \cite{MR3185193}) one expects that a similar inequality should hold for $S_n$, i.e. that one can obtain an inequality for the tail of $|\|S_N\| - \E\|S_N\||$ which, at least for \emph{moderate} values of $t$, would exhibit a subgaussian behaviour, with the subgaussian coefficient $\sqrt{N} \sup_{\|f\| \le 1} \E f(X_1)^2$. In the case of bounded variables this has been achieved by Talagrand \cite{MR1419006} as another example of application of his strong isoperimetric approach. More precisely, Talagrand established the following result \cite[Theorem 1.4]{MR1419006}.

\begin{theorem}\label{thm:Talagrand-inequality}
Let $X_1,\ldots,X_N$ be independent random variables with values in a measurable space $(\mathcal{S},\mathcal{G})$ and let $\mathcal{A}$ be a countable class of measurable real valued bounded functions on $\mathcal{S}$. Consider the random variable
\begin{displaymath}
  S = \sup_{f\in \mathcal{A}} \sum_{i=1}^N f(X_i)
\end{displaymath}
and define
\begin{displaymath}
 U = \sup_{f \in \mathcal A}\|f\|_{\infty}
\end{displaymath}
and
\begin{displaymath}
  \Sigma^2 = \E \sup_{f \in \mathcal A} \sum_{i=1}^N f(X_i)^2.
\end{displaymath}
Then, for every $t > 0$,
\begin{align}\label{eq:Talagrand-empirical}
  \p(|S - \E S| \ge t) \le C\exp\Big(-\frac{t}{CU}\log\Big(1+\frac{tU}{\Sigma^2}\Big)\Big),
\end{align}
where $C$ is an absolute constant.
\end{theorem}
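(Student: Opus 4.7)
The plan is to prove the Bennett-type bound by controlling the log-moment generating function $\Lambda(\lambda) = \log \E e^{\lambda(S - \E S)}$ via the entropy method for product measures, and then to deduce the tail estimate via Chernoff's inequality together with optimization in $\lambda$. Since $\mathcal{A}$ is countable, $S$ is measurable; one can select a (random) maximizer $\hat f \in \mathcal{A}$ with $S = \sum_i \hat f(X_i)$, which plays the role of a pivot in the per-coordinate estimates.

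The first key step is the tensorization of entropy, $\Ent(g) \le \sum_{i=1}^N \E\,\Ent_i(g)$, applied to $g = e^{\lambda(S - \E S)}$. Combined with a one-dimensional entropy estimate of Bousquet--Klein--Rio type, this bounds $\Ent(g)$ by $C\lambda^2 \phi(C\lambda U)\,\E[g \sum_i (S - S_i')^2]$, where $S_i'$ denotes the value of $S$ after replacing $X_i$ by an independent copy $X_i'$ and $\phi(x) = (e^x - 1 - x)/x^2$. The assumption $\sup_{f \in \mathcal{A}} \|f\|_\infty \le U$ supplies the pointwise bound $|S - S_i'| \le 2U$; more importantly, by the pivot trick one obtains $(S - S_i')_+^2 \le (\hat f(X_i) - \hat f(X_i'))^2$ using the original maximizer, and a symmetric estimate on the negative side using the maximizer of the perturbed configuration. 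Summing squared increments over $i$ yields $\sum_i \E(\hat f(X_i) - \hat f(X_i'))^2 \le 2\E \sup_{f \in \mathcal{A}} \sum_i f(X_i)^2 = 2\Sigma^2$, which is the whole point of the construction.

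These ingredients combine into a Herbst-type differential inequality $\lambda \Lambda'(\lambda) - \Lambda(\lambda) \le C \lambda^2 \Sigma^2 \phi(C\lambda U)$ valid for $\lambda \in [0, 1/(CU))$. Integrating on $(0,\lambda]$ and using $\Lambda(\lambda)/\lambda \to 0$ as $\lambda \to 0^+$ gives $\Lambda(\lambda) \le C \lambda^2 \Sigma^2 \phi(C\lambda U)$; Chernoff combined with optimization in $\lambda$ then produces the claimed Bennett bound $\p(S - \E S \ge t) \le C\exp(-(t/(CU))\log(1 + tU/\Sigma^2))$. The lower tail follows from the same argument applied to $-S$, which is a supremum over the class $-\mathcal{A}$.

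The main obstacle is recovering the \emph{weak} variance proxy $\Sigma^2 = \E \sup_{f \in \mathcal{A}} \sum_i f(X_i)^2$ rather than the much cruder $N \sup_f \E f(X_1)^2$ that a naive application of the method would produce. This is precisely what forces the choice of a single maximizer $\hat f$ before tensorizing, together with the sign-dependent pivot selection in the per-coordinate increment bounds; without this the contraction of the variance term across the $N$ coordinates would collapse into a sum of $N$ unrelated suprema. Once the single-$\hat f$ domination is in place, the rest of the argument is standard Bernstein--Herbst bookkeeping.
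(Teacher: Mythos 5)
This theorem is not proved in the paper at all: it is quoted as \cite[Theorem~1.4]{MR1419006}, where Talagrand established it via his isoperimetric ``approximation by $q$ points'' method---the same machinery (cf.\ Theorem~\ref{thm:q-point} and Lemma~\ref{le:crucial-lemma}) on which the paper builds its own main results. Your proof takes a genuinely different, and also well-established, route: the entropy method (log-Sobolev tensorization and a Herbst argument), in the style of Ledoux, Massart, Rio, Bousquet and Klein--Rio. The paper explicitly acknowledges this alternative school of proofs in the paragraph following the theorem, noting that entropy-based arguments yield versions of \eqref{eq:Talagrand-empirical} with explicit and sometimes optimal constants. So the comparison is really between two different strategies for the same black-box result: Talagrand's isoperimetric approach is more geometric, interfaces directly with the $q$-point machinery used later in the paper, and was historically the first; the entropy method is more analytic, gives sharper numerical constants, and avoids the heavy combinatorial isoperimetry.

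Your sketch does capture the essential ideas of the entropy-method proof: tensorization of entropy, the single-pivot maximizer $\hat f$ chosen before perturbing a coordinate (which is exactly what produces $\Sigma^2 = \E\sup_f \sum_i f(X_i)^2$ rather than a sum of $N$ unrelated suprema), and the Herbst differential inequality $\lambda\Lambda'(\lambda)-\Lambda(\lambda)\le C\lambda^2\Sigma^2\phi(C\lambda U)$ integrated in $\lambda$ and fed into Chernoff. That said, several steps are glossed over and would require real care to make rigorous. The key technical object is the \emph{weighted} expectation $\E\bigl[e^{\lambda(S-\E S)}\sum_i (S-S_i')_\pm^2\bigr]$, not the plain average $\sum_i\E(\hat f(X_i)-\hat f(X_i'))^2$; passing from one to the other is precisely where the Bousquet/Klein--Rio arguments spend most of their effort, and they need the specific structure of the modified logarithmic Sobolev inequality (separate treatment of positive and negative increments, and a one-dimensional Poisson-type bound) rather than the blanket form you wrote involving $\phi(C\lambda U)$ multiplying a quadratic functional. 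Similarly, the ``symmetric estimate on the negative side using the maximizer of the perturbed configuration'' is a genuine complication: that maximizer depends on $X_i'$, and bounding the resulting term by $\Sigma^2$ again takes extra work (and historically required separate treatment by Klein and Klein--Rio for the lower tail). These gaps do not indicate a wrong approach---this is exactly how the entropy-method proofs go---but as written the sketch is closer to an outline than a proof.
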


Taking $\mathcal{S} = F$ with $\mathcal{A}$  being the unit ball in the dual of the separable subspace of $F$, containing the range of $X_i$'s, we obtain an inequality in our setting of Radon variables. For $\mathcal{A}$ consisting of only one function, Talagrand's result reduces up to constants to Bennett's inequality and recovers the Gaussian behaviour of the tail of a sum of independent bounded random variables for small values of $t$ (note that $tU^{-1}\log(1+ tU/\Sigma^2) \simeq t^2/\Sigma^2$ for $tU/\Sigma^2$ small) and the Poisson behaviour for larger $t$.

It may initially seem that the parameter $\Sigma^2$ is not of weak type, since the supremum appears inside the expectation, however under an additional assumption that $\E f(X_i) = 0$ for all $f \in \mathcal{A}$ and $i \le N$, thanks to Talagrand's contraction principle \cite[Theorem 4.12]{MR1102015} and the classical Hoffmann-Jørgensen inequality one may estimate
\begin{align}\label{eq:strong-weak-variance}
  \Sigma^2 \le \sup_{f \in \mathcal{A}} \sum_{i=1}^N \E f(X_i)^2 + 32\sqrt{\E \max_{i\le N}\sup_{f\in\mathcal{A}} |f(X_i)|^2} \E\sup_{f \in \mathcal{A}}\Big|\sum_{i=1}^N f(X_i)\Big| +  8\E \max_{i\le N} \sup_{f\in\mathcal{A}} |f(X_i)|^2,
\end{align}
which allows to replace $\Sigma^2$ by the weak variance. Estimate of this type appeared first in the work by Talagrand for bounded variables and then in full generality were obtained by Gin\'e, Lata{\l}a and Zinn in \cite{MR1857312}. The above version is taken from \cite{MR2123200}. If the class $\mathcal{A}$ is symmetric around zero, then $\sup_{f \in \mathcal{A}}|\sum_{i=1}^N f(X_i)| = S$ and so Talagrand's inequality together with \eqref{eq:strong-weak-variance} allows to obtain deviation inequalities above $(1+\varepsilon) \E S$ or below $(1-\varepsilon)\E S$, expressed only in terms of the weak variance and the $L_\infty$-bound on the class $\mathcal{A}$ (see, e.g., \cite{MR1782276}). For this reason in what follows we will not focus on the weak parameter but we will express our inequalities in terms of $\Sigma^2$. We remark, that since the publication of Talagrand's result, several other approaches to concentration for bounded empirical processes were introduced, most notably ones based on the entropy method, leading to versions of the inequality \eqref{eq:Talagrand-empirical} with explicit and in some situations optimal constants (see, e.g., \cite{MR1399224,MR1782276,MR2135312,MR1890640}). Since our goal is to illustrate applications of Hoffmann-Jørgensen type inequalities \eqref{eq:HJTal} and we allow our constants to depend on the function $\Psi$, we will not discuss them in detail. Let us only mention that the proofs of our results, combined with the knowledge of constants in \eqref{eq:HJTal} and some additional calculations should provide explicit constants in the inequalities we are to discuss. With some more effort one can obtain estimates with the constants in front of the weak variance arbitrarily close to 2, at the cost of the remaining constants (see, e.g. \cite{MR3423302,MR2424985} for examples of such calculations).

Concentration inequalities for suprema of unbounded empirical processes or sums of unbounded $F$-valued variables, counterparting Theorem \ref{thm:Talagrand-inequality}, appeared in \cite{MR2123200,MR2424985,MR3423302,MR2434306,MR2591905,MR3263097,MR4138129,chamakh:hal-03175697}. They have found applications to the laws of iterated logarithm, estimates of bounded empirical processes of ergodic Markov chains, error bounds in statistical learning theory. Their proofs rely on an appropriate truncation allowing for a reduction to Talagrand's inequality, with the unbounded part handled by other means, usually by an appropriate version of the Hoffmann-Jørgensen inequality.

Below we present a general version of an inequality of this type followed by a discussion of optimality of the assumptions.

\begin{theorem}\label{thm:weak-variance}
Consider an Orlicz function $\Psi$ satisfying \eqref{eq:nsc}. Let $X_1,\ldots,X_N$ be independent random variables with values in a measurable space $(\mathcal{S},\mathcal{G})$ and let $\mathcal{A}$ be a countable class of measurable real valued bounded functions on $\mathcal{S}$. Consider the random variable
\begin{displaymath}
  S = \sup_{f\in \mathcal{A}} \sum_{i=1}^N f(X_i).
\end{displaymath}
Then, there exists a constant $c_\Psi > 0$, depending only on $\Psi$, such that both of the following inequalities hold for any $t>0$:
\begin{equation} \label{eq:Bennett}
\p  \Big(\Big|S - \E S\Big| \ge  t\Big) \le 2\exp\Big(- \frac{c_\Psi t}{U} \ln\Big(1 + \frac{tU}{\Sigma^2}\Big)\Big) + \frac{2}{\Psi\Big( \frac{c_\Psi t}{U}\Big) + 1}
\end{equation}
and
\begin{equation}\label{eq:Bernstein}
\p  \Big(\Big|S - \E S\Big|\ge  t\Big)
 \le 2\exp\Big( - \frac{c_\Psi t^2}{\Sigma^2 + tU}\Big) + \frac{2}{\Psi\Big( \frac{c_\Psi t}{U}\Big) + 1},
\end{equation}
where $U= \|\max_{i \le N}\sup_{f\in \mathcal{A}} |f(X_i)|\|_{\Psi}$ and $\Sigma^2 = \E \sup_{f\in \mathcal{A}} \sum_{i=1}^N f^2(X_i)$.
\end{theorem}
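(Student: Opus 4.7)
The plan is to combine Talagrand's concentration inequality for bounded empirical processes (Theorem \ref{thm:Talagrand-inequality}) with the Hoffmann-Jørgensen inequality \eqref{eq:HJTal} for $\Psi$, which holds by Theorem \ref{thm:characterization} since $\Psi$ satisfies \eqref{eq:nsc}, by means of a deterministic truncation at a level $M = c_0 U$, with $c_0 = c_0(\Psi)$ to be chosen large enough. First I will observe that once the constant $c_\Psi$ in the target bound is chosen small enough, for $t \le C_0 U$ with $C_0 = C_0(\Psi)$ the right-hand side of \eqref{eq:Bennett} already exceeds one and there is nothing to prove; hence one may assume $t \ge C_0 U$ throughout.

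Let $Y_i = \sup_{f \in \mathcal{A}} |f(X_i)|$ and $W = \max_{i \le N} Y_i$, so that $\|W\|_\Psi = U$, and introduce the truncated functions $\widetilde{f}(x) = f(x)\ind{\sup_{g \in \mathcal{A}} |g(x)| \le M}$; these are measurable, uniformly bounded by $M$, and indexed by the same countable class $\mathcal{A}$. Writing $\widetilde{S} = \sup_{f \in \mathcal{A}} \sum_{i=1}^N \widetilde{f}(X_i)$, the triangle inequality gives the union bound
\begin{displaymath}
\p(|S - \E S| \ge t) \le \p(|\widetilde{S} - \E \widetilde{S}| \ge t/2) + \p(|S - \widetilde{S}| \ge t/4) + \ind{|\E S - \E \widetilde{S}| \ge t/4}.
\end{displaymath}
For the first summand I will apply Theorem \ref{thm:Talagrand-inequality} to the class $\{\widetilde{f}\}_{f \in \mathcal{A}}$ with $L_\infty$-bound $M = c_0 U$ and weak variance $\widetilde{\Sigma}^2 = \E \sup_f \sum_i \widetilde{f}(X_i)^2 \le \Sigma^2$. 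The elementary estimate $\log(1 + kx) \ge \min(k,1) \log(1+x)$ for $k, x > 0$ shows that the resulting exponent can be converted to the form in \eqref{eq:Bennett} by absorbing $c_0$ into $c_\Psi$.

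The remaining two summands are handled via the pointwise bound $|S - \widetilde{S}| \le V := \sum_{i=1}^N Y_i \ind{Y_i > M}$. Since $\max_i (Y_i \ind{Y_i > M}) \le W$, the Hoffmann-Jørgensen inequality \eqref{eq:HJTal} applied to the independent nonnegative real variables $Y_i \ind{Y_i > M}$ yields $\|V\|_\Psi \le D(\E V + U)$ with $D = D(\Psi)$. The crux is the deterministic estimate $\E V \le C_\Psi U$. I will choose $c_0$ so that $\Psi(c_0) \ge 2$; Markov's inequality in Orlicz form then gives $\p(W > s) \le 1/\Psi(s/U) \le 1/2$ for every $s \ge M$, so that independence combined with $-\log(1 - x) \le 2x$ on $[0, 1/2]$ implies
\begin{displaymath}
\sum_{i=1}^N \p(Y_i > s) \le -\log \p(W \le s) \le 2\p(W > s) \qquad \text{for all } s \ge M.
\end{displaymath}
A layer-cake computation then gives
\begin{displaymath}
\E V = M \sum_i \p(Y_i > M) + \int_M^\infty \sum_i \p(Y_i > s)\, ds \le 2M\p(W > M) + 2\E W \le C_\Psi U,
\end{displaymath}
using $M\p(W > M) \le c_0 U/\Psi(c_0) \le U$ and the standard inequality $\E W \le C_\Psi \|W\|_\Psi$. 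Thus $\|V\|_\Psi \le C'_\Psi U$, and Markov in Orlicz form yields $\p(V \ge t/4) \le 2/(\Psi(c_\Psi t/U) + 1)$, the second term of \eqref{eq:Bennett}. The deterministic indicator vanishes because $|\E S - \E \widetilde{S}| \le \E V \le C_\Psi U \le t/4$ by the assumption $t \ge C_0 U \ge 4 C_\Psi U$. Finally, \eqref{eq:Bernstein} follows from \eqref{eq:Bennett} via the elementary inequality $x \log(1 + x) \ge x^2/(2(1 + x))$ applied with $x = tU/\Sigma^2$.

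The main obstacle is the estimate $\E V \le C_\Psi U$, which is what makes the combination of Talagrand and Hoffmann-Jørgensen produce the advertised parameters. The key observation is that $M = c_0 U$ is chosen to lie in the tail regime of $W$, so that independence forces $\sum_i \p(Y_i > s)$ to be comparable with $\p(W > s)$ for $s \ge M$. The remainder amounts to constant tracking: the single parameter $c_0$ must be chosen simultaneously large enough for this tail comparison and moderate enough, relative to the Bennett prefactor, so that Talagrand applied with bound $M = c_0 U$ still yields the desired $t/U$ scaling.
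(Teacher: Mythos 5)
Your proof is correct and its high–level architecture — truncate at a deterministic level, apply Talagrand's inequality (Theorem~\ref{thm:Talagrand-inequality}) to the bounded part, control the unbounded part in $L_\Psi$ via~\eqref{eq:HJTal}, and finish with the Orlicz Chebyshev bound of Lemma~\ref{le:Orlicz-Chebyshev} — is the same as the paper's. The differences are in the two moving parts: first, the paper truncates at $R = 8\E\max_i F(X_i)$ while you truncate at $M = c_0 U$ with $c_0=c_0(\Psi)$; these levels are comparable (since $\E\max_i F(X_i)\le C_\Psi U$ by Lemma~\ref{le:embedding}), and your choice makes the tail bound $\p(W>s)\le 1/\Psi(s/U)$ immediate but comes at the cost of tracking $c_0$ through the final constant. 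Second, and this is the genuinely different ingredient, the paper controls the first moment of the unbounded contribution by applying the quantile form of Hoffmann–Jørgensen~\eqref{eq:HJ-quantile} with $p=1$ and $t_0=0$ in the space $\ell_\infty(\mathcal{A})$, whereas you compute $\E V$ directly via a layer-cake identity together with the tail comparison $\sum_i \p(Y_i>s)\le 2\p(W>s)$ (which you rederive from independence and $-\log(1-x)\le 2x$ — this is essentially Talagrand's Lemma~8, stated in the paper as Lemma~5.1). Your route is slightly more self-contained, bypassing the quantile inequality at the price of replaying a short probabilistic lemma; both are clean. Your three-term triangle inequality with a deterministic indicator for $|\E S-\E\widetilde S|$ is a cosmetic variant of the paper's grouping of $S_2+\E S_2$, and the derivation of \eqref{eq:Bernstein} from \eqref{eq:Bennett} via $\log(1+x)\ge x/(1+x)$ matches the paper's Remark~\ref{re:Bennett-Bernstein}. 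One small stylistic caveat: you should state explicitly that $c_0$ is chosen $\ge 2$, so that the reduction $\log(1+c_0 tU/(2\Sigma^2))\ge\log(1+tU/\Sigma^2)$ is trivial and no separate lemma on $\log(1+kx)$ is needed, and that the "moderate enough" worry about $c_0$ is vacuous, since $c_0$ only enters the final exponent through $c_\Psi$ and there is no upper constraint.
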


\begin{remark} \label{re:Bennett-Bernstein} The inequality \eqref{eq:Bernstein}, up to the value of the constant, follows from \eqref{eq:Bennett} but we state it explicitly, since it counterparts the classical Bernstein inequality for sums of independent random variables, in particular for functions $\Psi$ which grow at most exponentially it is equivalent (again up to the value of the constant $c_\Psi$) to
\begin{align}\label{eq:Bernstein-equivalent}
\p  \Big(\Big| S - \E S\Big| \ge  t\Big)
 \le 2\exp\Big( - c_\Psi\frac{ t^2}{\Sigma^2}\Big) + \frac{2}{\Psi\Big( \frac{c_\Psi t}{U}\Big) + 1}.
\end{align}
In this case the right-hand sides of \eqref{eq:Bernstein} and \eqref{eq:Bennett} are in fact equivalent up to a constant multiplicative factor. On the other hand, if $\Psi$ grows faster than exponentially (which in view of Theorem \ref{thm:characterization} or earlier results by Talagrand is possible under \eqref{eq:nsc}), then this equivalence does not hold and \eqref{eq:Bennett} may give better results than \eqref{eq:Bernstein}.

\end{remark}

\begin{remark}
Up to the value of the constant $c_\Psi$, the inequality \eqref{eq:Bernstein} together with \eqref{eq:strong-weak-variance} allows to recover several results from the literature (in particular, the results from \cite{MR2434306} and \cite{MR2591905} concerning $\Psi(x) = x^p$, from \cite{MR2424985} on $\Psi_\alpha(x) = e^{x^\alpha} - 1$, and from \cite{chamakh:hal-03175697} on $\Psi(x) = \exp(\log^\beta(1+x)) - 1$), which have found applications to strong limit theorems, statistics, inequalities for additive functionals of Markov chains.
\end{remark}

We will now address the necessity of the assumption \eqref{eq:nsc} in Theorem \ref{thm:weak-variance}.

\begin{prop}\label{prop:weak-variance-necessity}  Assume that $\Psi$ is an Orlicz function such that for all sequences of i.i.d real valued, mean zero random variables $X_1,\ldots,X_N$ with $\|X_1\|_\infty \le 1$ and all $t > 0$,
\begin{multline} \label{eq:Bennett-real-valued}
\p\Big(\sum_{i=1}^N X_i \ge  t\Big) \\
\le C_\Psi\exp\Big(- c_\Psi \frac{t}{\| \max_{i\le N} |X_i|\|_\Psi} \ln\Big(1 + \frac{t\| \max_{i\le N} |X_i|\|_\Psi}{N\E X_1^2}\Big)\Big) + \frac{C_\Psi}{\Psi\Big( c_\Psi \frac{t}{\| \max_{i\le N} |X_i|\|_\Psi}\Big) + 1},
\end{multline}
where $C_\Psi, c_\Psi$ are positive constants depending only on $\Psi$. Then $\Psi$ satisfies the condition \eqref{eq:nsc}.

Moreover, the condition \eqref{eq:nsc} also holds if there exist positive constants $c_\Psi, C_\Psi < \infty$ such that $\Psi(x) \le C_\Psi e^{C_\Psi x}$ for all $x \ge 0$ and  for all sequences $(X_n)_{n\le N}$ as above and all $t > 0$,
\begin{equation}\label{eq:Bernstei-real-valued}
\p\Big(\sum_{i=1}^N X_i \ge  t\Big)  \le C_{\Psi}\exp\Big( - \frac{c_\Psi t^2}{N\E X_1^2 + t\| \max_{i\le N} |X_i|\|_\Psi}\Big) + \frac{C_\Psi}{\Psi\Big( c_\Psi \frac{t}{\| \max_{i\le N} |X_i|\|_\Psi}\Big) + 1}.
\end{equation}

\end{prop}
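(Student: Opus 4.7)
My plan is to apply the hypothesized tail inequality to a carefully chosen family of i.i.d.\ three-atom symmetric random variables, and to compare the resulting upper bound with an explicit Poisson-type lower bound on the tail probability, thereby extracting \eqref{eq:nsc}. The Bennett case will be forced by the shape of its exponent; the Bernstein case will require an additional contradiction argument exploiting the standing upper bound on $\Psi$.

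For each large $u$, set $\lambda:=1/(2\Psi(u))$, $p:=\lambda/N$ with $N$ large depending on $u$, and take $X_1,\dots,X_N$ i.i.d.\ with $\p(X_i=\pm u)=p$ (each sign) and $\p(X_i=0)=1-2p$. These variables are symmetric, mean zero, and bounded by $u$; one computes $N\E X_1^2=u^2/\Psi(u)$, and solving $\Psi(u/a)(1-(1-2p)^N)=1$ gives $\|\max_i|X_i|\|_{\Psi}=1+o(1)$. For every integer $k\ge 1$, the event that exactly $k$ indices take the value $u$ and none takes $-u$ is contained in $\{\sum_i X_i\ge ku\}$, so Stirling yields
\[
\p\Bigl(\sum_{i=1}^N X_i\ge ku\Bigr)\;\ge\;\binom{N}{k}p^{k}(1-2p)^{N-k}\;\ge\;c\Bigl(\frac{e}{2k\Psi(u)}\Bigr)^{k}.
\]

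For \eqref{eq:Bennett-real-valued} at $t=ku$, the exponential term has exponent $c_\Psi ku\ln(1+k\Psi(u)/u)\asymp c_\Psi ku\,\psi(u)$, much larger than $k\psi(u)+k\ln k+O(1)$, so it cannot carry the lower bound; the $\Psi$-Markov term must, giving $\psi(c_\Psi ku)\le k\ln(2k)+k\psi(u)+O(1)$. Substituting $s=c_\Psi k$ and extending to real $s\ge K$ by monotonicity of $\psi$ produces \eqref{eq:nsc}. For \eqref{eq:Bernstei-real-valued} the exponent of the first term simplifies to $\asymp c_\Psi ku$ (as $tU\gg V$), only linear in $u$, so it could a priori absorb the tail when $\psi$ is nearly linear. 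I will argue by contradiction: assume \eqref{eq:nsc} fails, so for every $K$ there exist $u,s\ge K$ with $\psi(su)>K(s\ln(1+s)+s\psi(u))$; pick such a pair with $K$ large and set $k:=\lceil s/c_\Psi\rceil$. Then $\Psi(c_\Psi ku)+1=e^{\psi(c_\Psi ku)}\ge e^{\psi(su)}>e^{Ks(\ln s+\psi(u))}$, making the $\Psi$-term negligible compared to the tail lower bound, so the exponential term must carry it, forcing
\[
c_\Psi u\;\le\;\psi(u)+\ln s+O(1).
\]
Combined with $\Psi(x)\le C_\Psi e^{C_\Psi x}$, i.e.\ $\psi(x)\le C_\Psi x+O(1)$, this two-sided linear sandwich on $\psi$ yields
\[
\psi(su)\;\le\;C_\Psi su+O(1)\;\le\;(2C_\Psi/c_\Psi)\bigl(s\ln s+s\psi(u)\bigr)+O(s),
\]
which contradicts the assumed failure once $K$ exceeds $2C_\Psi/c_\Psi$.

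The main obstacle is precisely this Bernstein step. In the Bennett case the log-factor in the exponent makes its exponential term automatically too small to ever carry a Poisson-type tail of weight $(e/(2k\Psi(u)))^{k}$, and the $\Psi$-Markov term is forced to do all the work, yielding \eqref{eq:nsc} essentially algebraically. In the Bernstein case the exponential term is only linear in $u$ and can genuinely compete with the tail when $\psi$ is linear, so one cannot avoid the dichotomy above; the hypothesis $\Psi\le C_\Psi e^{C_\Psi x}$ is used exactly to close it by ensuring that a linear lower bound on $\psi$ is matched by a linear upper bound, reducing \eqref{eq:nsc} to a ratio estimate. A secondary, routine technicality is uniformizing the Poisson limit in $N$, which I would handle by taking $N$ sufficiently large depending on $u$ and $\Psi$ so that the above asymptotic computations hold up to constants absorbable into the final constant $K$.
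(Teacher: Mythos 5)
Your argument is correct, and your route genuinely differs from the paper's in the Bernstein part. One cosmetic point first: the proposition requires $\|X_1\|_\infty \le 1$, whereas your atoms sit at $\pm u$; this is harmless because both \eqref{eq:Bennett-real-valued} and \eqref{eq:Bernstei-real-valued} are invariant under the simultaneous scaling $X_i\mapsto X_i/u$, $t\mapsto t/u$, so you may simply rescale (the paper avoids the issue entirely by using centered $\{0,1\}$-valued indicators of success probability $1/(N\Psi(u))$, which are already bounded by $1$ and give the same Poisson lower bound $\p(\sum_iX_i\ge s)\ge\exp(-C(s\ln(1+s)+s\psi(u)))$). For the Bennett bound your reasoning matches the paper's: the exponential term always dominates the Poisson tail, so the Markov-type term must carry it, and \eqref{eq:nsc} follows directly. (Your shortcut $\ln(1+k\Psi(u)/u)\asymp\psi(u)$ is not uniform over all Orlicz $\Psi$; for polynomially growing $\Psi$ the left side is only a fixed fraction of $\psi(u)$, but the inequality actually needed, $ku\ln(1+k\Psi(u)/u)\ge C(k\ln k+k\psi(u))$ for $k,u$ large, still holds after a short case split.) For the Bernstein bound the paper invokes Remark~\ref{re:Bennett-Bernstein} and \eqref{eq:Bernstein-equivalent}: under $\Psi(x)\le C_\Psi e^{C_\Psi x}$ the term $tU$ in the Bernstein denominator can be dropped up to constants, the resulting exponent $s^2\Psi(u)$ again always dominates the Poisson tail, and one concludes exactly as in the Bennett case via the comparison $\min(s^2\Psi(u),\psi(csu))\le C(s\ln(1+s)+s\psi(u))$. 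You instead retain the $tU$-term, observe the exponent is then only of order $ku$, concede that it may compete with the Poisson tail, and argue by dichotomy: if \eqref{eq:nsc} fails along witnesses $(s,u)$, the Markov term is negligible against the Poisson lower bound, so the exponential must account for the tail, forcing $c_\Psi u\le C(\ln s+\psi(u))$; pairing this with the linear upper bound $\psi(x)\le C_\Psi x+O(1)$ coming from the growth hypothesis gives $\psi(su)\le C'(s\ln s+s\psi(u))$, which contradicts the chosen witness once $K$ is large. Both proofs use the growth hypothesis exactly once; the paper's is a bit shorter because it piggybacks on the stated equivalence \eqref{eq:Bernstein-equivalent}, while yours is self-contained and makes explicit where the linearity of $\psi$ is the obstruction.
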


\begin{remark}
In the second part of Proposition \ref{prop:weak-variance-necessity} one needs to assume a growth condition on $\Psi$ since clearly for any function of superexponential growth we have $\|\cdot\|_{\Psi} \ge c\|\cdot\|_{\Psi_1}$ (recall that $\Psi_1(x) = e^x - 1$), moreover the second term on the right-hand side of \eqref{eq:Bernstein} is dominated by the first one and so \eqref{eq:Bernstein} follows from its version for $\Psi_1$.
\end{remark}

\subsection{Inequalities for convex functions\label{sec:convex-functions}}

 Another celebrated result by Talagrand is the concentration inequality for convex functions of independent bounded random variables \cite{MR1387624}, which asserts that if $f\colon \R^N\to \R$ is a convex 1-Lipschitz function and $X_1,\ldots,X_N$ are independent random variables with values in $[-1,1]$, then for all $t\ge 0$,

\begin{align}\label{eq:Talagrand-convex}
  \p(|f(X_1,\ldots,X_N) - \Med f(X_1,\ldots,X_N)|\ge t) \le 4e^{-t^2/16}.
\end{align}

One of the important observations related to this result and responsible for many of its applications is that contrary to dimension free concentration for all $1$-Lipschitz functions, which requires strong geometric assumptions on the law of the variables $X_i$ (see, e.g., \cite{MR2749436}), the only assumption required for \eqref{eq:Talagrand-convex} is the uniform boundedness of $X_i$. Following the result by Talagrand, considerable attention has been devoted to extending the inequality \eqref{eq:Talagrand-convex} to unbounded variables. One could think that the uniform boundedness can be replaced with a uniform bound on $\|X_i\|_{\Psi_2}$, however it turns out that dimension free concentration again requires geometric conditions on the law of $X_i$, the difference with the result for Lipschitz functions being that the regularity is required only outside of a compact subset of $\R$ (see the series of papers \cite{MR2749436,MR3706606,MR3825894} by Gozlan et al., in which this question is extensively discussed). In another direction Klochkov and Zhivotovskiy \cite{MR4073683} (see also \cite{MR2391154}) and Huang and Tikhomirov \cite{MR4583676} investigated inequalities which can be obtained just under a bound on $\|X_i\|_{\Psi_2}$. Such estimates contain in general some logarithmic factors or are expressed in terms of $\|\max_{i\le n} |X_i|\|_{\Psi_2}$. Similar (but necessarily weaker) inequalities were also obtained by Huang and Tikhomirov \cite{MR4583676} and by Sambale \cite{10.1007/978-3-031-26979-0_7} for variables with bounded $\|\cdot\|_{\Psi_\alpha}$ norm for $\alpha \in (0,2)$.

The proposition below generalizes results due to Klochkov--Zhivotovskiy \cite{MR4073683} and Sambale \cite{10.1007/978-3-031-26979-0_7} to more general Orlicz norms, $\|\cdot\|_{\Phi}$.

\begin{prop}\label{prop:convex}
Let $\Psi$ be an Orlicz function satisfying \eqref{eq:nsc} and let $\Phi$ be the Orlicz function defined by $\Phi(x) = \Psi(x^2)$, $x \ge 0$. If $X=(X_1,...,X_N)$ is an $\R^N$ valued random variable with independent coordinates $X_1,...,X_N$, such that $\| \max_{i \le N} |X_i| \|_{\Phi} < \infty$, then
\begin{align}\label{eq:convex-unbounded}
\p  \Big( |f(X) - \E f(X)| > t \Big)
\le 2\exp\Big(-c_\Phi\frac{t^2}{(\E \max_{i \le N} |X_i| )^2 }\Big) +  \frac{2}{ \Phi\Big( c_\Phi \frac{t}{\| \max_{i \le N} |X_i| \|_{\Phi}}\Big) + 1}
\end{align}
for any convex and 1-Lipschitz function $f:\mathbb R^N \to \mathbb R$ and $t>0$, where $c_\Phi \in (0,\infty)$ is a constant depending only on $\Phi$.
\end{prop}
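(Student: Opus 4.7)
The plan is to follow the truncation-plus-Talagrand strategy of Klochkov--Zhivotovskiy \cite{MR4073683} and Sambale \cite{10.1007/978-3-031-26979-0_7}: apply Talagrand's classical convex concentration inequality \eqref{eq:Talagrand-convex} to a bounded truncation of the $X_i$, and handle the truncation error via a Hoffmann--Jørgensen-type estimate from Theorem \ref{thm:characterization}. The crucial new ingredient is that $\Phi(x)=\Psi(x^2)$ need not itself satisfy \eqref{eq:nsc} (for instance if $\Psi=\Psi_\alpha$ with $\alpha>1/2$); however, the identity $\|Z\|_{\Phi}^2=\|Z^2\|_{\Psi}$, valid for non-negative $Z$, lets us invoke Theorem \ref{thm:characterization} at the $\Psi$-level, where the assumption of the proposition applies.

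Concretely, I would fix a threshold $M>0$ (to be optimized) and define the coordinate-wise truncation $Y_i=\mathrm{sign}(X_i)\min(|X_i|,M)$. The $Y_i$ are independent and bounded by $M$, and since the truncation map is 1-Lipschitz, $f(Y)$ remains a convex $1$-Lipschitz function of the $Y_i$. Talagrand's inequality \eqref{eq:Talagrand-convex}, combined with the standard median-to-mean adjustment, then yields a subgaussian bound for $f(Y)$ with variance proxy of order $M^2$. On the event $\{\max_i|X_i|\le M\}$ we have $X=Y$ and hence $f(X)=f(Y)$, while the complement is bounded by Chebyshev in $L_{\Phi}$ as $\p(\max_i|X_i|>M)\le 2/(\Phi(M/\|\max_i|X_i|\|_{\Phi})+1)$, matching the heavy-tail term for $M$ proportional to $t$. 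It remains to control the deterministic shift $|\E f(X)-\E f(Y)|\le \E\|X-Y\|_2$, and for this I would write $\|X-Y\|_2^2=\sum_i(X_i-Y_i)^2$ as a sum of independent non-negative real random variables and apply Theorem \ref{thm:characterization} with $\Psi$ to obtain
\[
\Bigl\|\sum_i (X_i-Y_i)^2\Bigr\|_{\Psi}\le D\Bigl(\E\|X-Y\|_2^2+\bigl\|\max_i|X_i-Y_i|^2\bigr\|_{\Psi}\Bigr)\le D\Bigl(\E\|X-Y\|_2^2+\|\max_i|X_i|\|_{\Phi}^2\Bigr),
\]
using $\|\max_i|X_i-Y_i|^2\|_{\Psi}=\|\max_i|X_i-Y_i|\|_{\Phi}^2\le\|\max_i|X_i|\|_{\Phi}^2$. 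Via $\|Z\|_{\Phi}^2=\|Z^2\|_{\Psi}$ this converts into a $\Phi$-norm, hence also an $L_1$, bound on $\|X-Y\|_2$.

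Optimizing $M$, essentially of order $\max\{t,\E\max_i|X_i|\}$, should combine the two estimates into the claimed inequality. The main obstacle is recovering the subgaussian coefficient $(\E\max_i|X_i|)^2$ rather than the a priori larger $\E(\max_i|X_i|)^2$ or $\|\max_i|X_i|\|_{\Phi}^2$. This requires showing that $\E\|X-Y\|_2\le C\,\E\max_i|X_i|$ at the relevant truncation level --- heuristically, that only $O(1)$ coordinates can exceed $M$ in expectation when $M$ is of order $\E\max_i|X_i|$ --- and then carrying this sharper mean-deviation bound through the Talagrand step so that the $M^2$ in the exponent is effectively replaced by $(\E\max_i|X_i|)^2$. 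A secondary technicality, as already noted, is that $\Phi$ need not satisfy \eqref{eq:nsc}, which forces us to apply Theorem \ref{thm:characterization} to the squared quantity $\|X-Y\|_2^2$ under $\Psi$ rather than to $\|X-Y\|_2$ directly under $\Phi$.
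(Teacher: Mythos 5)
Your overall toolbox is the right one (hard truncation, Talagrand's convex inequality for the bounded part, Hoffmann--J{\o}rgensen for the remainder, and the crucial observation that $\|Z\|_\Phi^2 = \|Z^2\|_\Psi$, which is exactly why Theorem~\ref{thm:characterization} must be invoked at the $\Psi$-level on the squared Euclidean norm rather than at the $\Phi$-level). But the decomposition you propose does not actually yield the claimed inequality, and the difficulty you flag as ``the main obstacle'' is a symptom of that decomposition being the wrong one.

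You split via the event $\{\max_i|X_i|\le M\}$, bounding its complement by Chebyshev in $L_\Phi$. This forces a genuine conflict: choosing $M$ of order $t$ makes $\p(\max_i|X_i|>M)$ decay like the heavy-tail term, but then Talagrand's step produces variance proxy $M^2\sim t^2$, so the Gaussian factor degenerates to $O(1)$; choosing $M$ of order $\E\max_i|X_i|$ fixes the Gaussian factor but turns $\p(\max_i|X_i|>M)$ into a constant, with no decay in $t$. No single choice of $M$ resolves both. The paper's way out is a different split. Fix $M = 8\,\E\max_i|X_i|$ once and for all (the factor $8$ makes $\p(\max_i|X_i|>M)\le 1/8$ so that the parameter-free version of \eqref{eq:HJ-quantile} applies), and write
\begin{equation*}
\p\bigl(|f(X)-\E f(X)|\ge 2t\bigr)\le \p\bigl(|f(Y)-\E f(X)|\ge t\bigr) + \p\bigl(|f(X)-f(Y)|\ge t\bigr).
\end{equation*}
The second probability is \emph{not} $\p(X\ne Y)$; it is the tail of the random variable $|f(X)-f(Y)|\le \|Z\|_2$, and it is here that the heavy-tail decay in $t$ comes from. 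You already derived the key estimate $\bigl\|\|Z\|_2^2\bigr\|_\Psi \le C\|\max_i|X_i|\|_\Phi^2$ via the $\Psi$-level Hoffmann--J{\o}rgensen inequality, but you only used it to control $\E\|X-Y\|_2$; the same bound, read as $\bigl\|\|Z\|_2\bigr\|_\Phi \le C\|\max_i|X_i|\|_\Phi$, combined with Lemma~\ref{le:Orlicz-Chebyshev}, gives $\p(|f(X)-f(Y)|\ge t)\le 2/(\Phi(ct/U)+1)$ directly. Once that is in place, the first probability is handled exactly as you outline: Talagrand's inequality for the bounded $Y$ gives variance proxy $M^2=64(\E\max_i|X_i|)^2$ automatically (no extra work is needed to ``recover'' this scale), and the mean shift $|\E f(X)-\E f(Y)|\le C\,\E\max_i|X_i|$ follows from \eqref{eq:HJ-quantile} in $\ell_2^N$, after which one absorbs it into $t/2$ for $t$ large and notes the bound is trivial for small $t$.

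In short: the gap is not in the individual estimates --- all of yours are valid and coincide with the paper's --- but in how you assemble them. Splitting on the event $\{\max_i|X_i|\le M\}$ is lossy and cannot produce a term that decays in $t$; you must instead control the tail of $|f(X)-f(Y)|$ itself, which the $L_\Phi$-norm bound on $\|Z\|_2$ (that you already have) does for free.
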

\begin{remark}
It is easy to show that in the above theorem one can replace the mean by a median, as under \eqref{eq:convex-unbounded} the median and the mean are apart by at most $C_\Psi \|\max_{i\le N} |X_i|\|_{\Phi}$, so it is enough to adjust constants.
\end{remark}

\begin{remark}
We note that unlike in the case of inequalities presented in Section \ref{sec:weak-variance} we do not know if under an additional assumption that $\Psi(x) \le Ce^{Cx}$, the condition \eqref{eq:nsc} is necessary for the inequality \eqref{eq:convex-unbounded} to hold.
\end{remark}

\subsection{Orlicz boundedness of partial sums of random series with independent summands\label{sec:boundedness}}

Let us now pass to an application related to integrability of partial sums of series of independent random variables in Banach spaces, which was the original motivation of Hoffmann-Jørgensen. Recall that a sequence $(Y_n)_{n\ge 1}$ of random variables with values in $(F,\|\cdot\|)$ is said to be stochastically bounded, if for every $\varepsilon > 0$ there exists $K > 0$ such that for all $n \ge 1$, $\p(\|Y_n\| \ge K) < \varepsilon$. In \cite{MR0356155} Hoffmann-Jørgensen proved the following result (see Theorem 3.1 and Corollary 3.2 therein).

\begin{theorem}\label{thm:HJ}
Let $X_n$, $n\ge 1$ be independent random variables with values in a Banach space $(F,\|\cdot\|)$ and let $S_n = X_1+\ldots+X_n$. If the sequence $(S_n)_{n\ge 1}$ is stochastically bounded, then the following conditions are equivalent:
\begin{itemize}
\item[(i)] $(S_n)_n$ is bounded in $L_p$,

\item[(ii)] $\sup_n \|X_n\| \in L_p$,

\item[(iii)] $\sup_n \|S_n\| \in L_p$.
\end{itemize}
\end{theorem}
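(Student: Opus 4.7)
The easy implications (iii) $\Rightarrow$ (i) and (iii) $\Rightarrow$ (ii) are immediate from the pointwise bounds $\|S_n\| \le \sup_m \|S_m\|$ and $\|X_n\| = \|S_n - S_{n-1}\| \le 2\sup_m \|S_m\|$ (with $S_0 = 0$). My plan is therefore to establish the two reverse implications (i) $\Rightarrow$ (iii) and (ii) $\Rightarrow$ (iii); in both, the preparatory step is to use the stochastic boundedness hypothesis to produce a uniform-in-$N$ bound on the quantile $t_0$ appearing in \eqref{eq:HJ-quantile}.

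This uniform bound will come from Ottaviani's maximal inequality. Fix $M > 0$ with $\p(\|S_n\| > M) < 1/4$ for every $n$, available by stochastic boundedness. Then $\p(\|S_N - S_n\| > 2M) \le \p(\|S_N\| > M) + \p(\|S_n\| > M) \le 1/2$ for all $n \le N$, and Ottaviani's inequality yields
\[
\p\Big(\max_{n \le N} \|S_n\| > 2M + s\Big) \le 2\p(\|S_N\| > s)
\]
for every $s > 0$ and $N$. A second application of stochastic boundedness (with threshold $(4 \cdot 4^p)^{-1}$) furnishes $M' > 0$ such that $\p(\|S_N\| > M') \le (4 \cdot 4^p)^{-1}$ for all $N$, and plugging this in shows $t_0 \le 2M + M'$ uniformly in $N$.

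Both implications now drop out. For (ii) $\Rightarrow$ (iii), the inequality \eqref{eq:HJ-quantile} gives
\[
\Big\| \max_{n \le N} \|S_n\| \Big\|_p \le \widetilde{D}_p\Big(2M + M' + \big\|\sup_i \|X_i\|\big\|_p\Big),
\]
which is finite and uniformly bounded in $N$ under (ii), so monotone convergence delivers $\sup_n \|S_n\| \in L_p$. For (i) $\Rightarrow$ (iii) one can bypass \eqref{eq:HJ-quantile} altogether: integrating the Ottaviani tail estimate against $p t^{p-1}\, dt$ yields $\E \max_{n \le N}\|S_n\|^p \le C_p\big(M^p + \sup_m \|S_m\|_p^p\big)$, again uniformly bounded in $N$ under (i), and monotone convergence finishes. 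The only subtlety is ensuring that Ottaviani's denominator and the quantile $t_0$ stay under control as $N$ grows; once this is in place the argument reduces to a routine assembly of Ottaviani's inequality, the Hoffmann-Jørgensen bound \eqref{eq:HJ-quantile}, and monotone convergence.
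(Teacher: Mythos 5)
Your argument is correct, and it is worth flagging up front that the paper does not actually prove Theorem \ref{thm:HJ} itself --- it cites it from Hoffmann-J\o rgensen's original work --- so the natural point of comparison is the paper's proof of the Orlicz analogue, Theorem \ref{thm:boundedness}. Against that proof your route differs in two ways. First, for (i) $\Rightarrow$ (iii) the paper reduces to symmetric summands and invokes L\'evy's inequality, then transfers back to the general case by a symmetrization argument in $\ell_\infty^N(F)$ together with Lemmas \ref{le:symmetrization} and \ref{le:mean-Orlicz}; you instead apply Ottaviani's inequality directly, with stochastic boundedness supplying the uniform denominator bound $\max_{n\le N}\p(\|S_N-S_n\|>2M)\le 1/2$. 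This keeps the step free of any symmetric reduction and, as you observe, makes (i) $\Rightarrow$ (iii) entirely independent of the Hoffmann-J\o rgensen inequality. Second, the paper proves (ii) $\Rightarrow$ (i) $\Rightarrow$ (iii), invoking the Hoffmann-J\o rgensen inequality twice (once with $p=1$ to get $L_1$-boundedness, once in $L_\Psi$ via \eqref{eq:HJTal}), while you go (ii) $\Rightarrow$ (iii) in a single step from the quantile form \eqref{eq:HJ-quantile}, with the uniform bound $t_0\le 2M+M'$ again produced by Ottaviani. Both arguments are sound; yours is a bit leaner in the $L_p$ setting where the quantile form \eqref{eq:HJ-quantile} is directly available, whereas the paper's two-stage structure is designed precisely so that replacing \eqref{eq:HJ-quantile} for general $p$ by the mean-form inequality \eqref{eq:HJTal} yields the Orlicz statement with minimal change. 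One minor bookkeeping point you gloss over: when integrating the Ottaviani tail against $pt^{p-1}\,dt$ you need to split at $t=2M$ and absorb the shift $(u+2M)^{p-1}$, but this is routine (or, cleaner still, set $W=(\max_n\|S_n\|-2M)^+$, note $\p(W>s)\le 2\p(\|S_N\|>s)$, and apply $(a+b)^p\le 2^{p-1}(a^p+b^p)$).
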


Given the Hoffmann-Jørgensen inequality \eqref{eq:HJ-quantile}, the proof of the above theorem is a nowadays standard application of symmetrization techniques and L\'{e}vy's maximal inequality. A straightforward adaptation of this argument gives the following theorem. For completeness we will provide the proof in Section \ref{sec:proofs-boundedness}.

\begin{theorem}\label{thm:boundedness} Let $X_n$, $n\ge 1$ be independent random variables with values in a Banach space $(F,\|\cdot\|)$ and let $S_n = X_1+\ldots+X_n$. Let $\Psi$ be an Orlicz function satisfying the condition \eqref{eq:nsc}. If the sequence $(S_n)_{n\ge 1}$ is stochastically bounded, then the following conditions are equivalent:

\begin{itemize}
\item[(i)] $(S_n)_n$ is bounded in $L_\Psi$,

\item[(ii)] $\sup_n \|X_n\| \in L_\Psi$,

\item[(iii)] $\sup_n \|S_n\| \in L_\Psi$.
\end{itemize}
\end{theorem}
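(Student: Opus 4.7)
The plan is to prove (iii) $\Rightarrow$ (i) and (iii) $\Rightarrow$ (ii) trivially---from monotonicity of $\|\cdot\|_\Psi$ and the bound $\|X_n\|=\|S_n-S_{n-1}\|\le 2\sup_k\|S_k\|$---and then to close the equivalence by showing both (i) $\Rightarrow$ (iii) and (ii) $\Rightarrow$ (iii) through a symmetrization scheme. Let $(X_n')$ be an independent copy of $(X_n)$, and set $\tilde X_n=X_n-X_n'$ and $\tilde S_n=S_n-S_n'$. Then $(\tilde X_n)$ is an independent symmetric sequence and $(\tilde S_n)$ is a stochastically bounded symmetric random walk, inheriting the relevant Orlicz bounds from the original sequence.

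First I would upgrade the mean form \eqref{eq:HJTal} from Theorem \ref{thm:characterization} to a quantile version
\[
\bigl\|\max_{n\le N}\|T_n\|\bigr\|_\Psi \le \widetilde{D}_\Psi\bigl(t_0+\bigl\|\max_{i\le N}\|Y_i\|\bigr\|_\Psi\bigr),
\]
valid for any independent sequence $Y_1,\ldots,Y_N$ in $F$ with partial sums $T_n$, where $t_0$ is an appropriate quantile of $\max_{n\le N}\|T_n\|$. This is the Orlicz counterpart of \eqref{eq:HJ-quantile} and realizes the formal equivalence announced in the introduction; the derivation follows the classical Hoffmann-Jørgensen argument, splitting at the first entry time $\tau=\min\{n\colon\|T_n\|>t_0\}$, using the decomposition $\max_{n\le N}\|T_n\|\le t_0+\max_i\|Y_i\|+\max_{n\ge\tau}\|T_n-T_\tau\|$ on $\{\tau\le N\}$, and applying \eqref{eq:HJTal} conditionally on $\tau$.

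For (ii) $\Rightarrow$ (iii), note that $\|\sup_n\|\tilde X_n\|\|_\Psi<\infty$ follows from (ii). By L\'evy's maximal inequality for the symmetric walk, $\p(\max_{n\le N}\|\tilde S_n\|>t)\le 2\p(\|\tilde S_N\|>t)$, so stochastic boundedness of $(\tilde S_N)$ bounds the quantile $t_0^{(N)}$ uniformly in $N$; the quantile inequality above then yields $\sup_N\|\max_{n\le N}\|\tilde S_n\|\|_\Psi<\infty$, and monotone convergence gives $\|\sup_n\|\tilde S_n\|\|_\Psi<\infty$. The implication (i) $\Rightarrow$ (iii) is parallel but does not need HJ: from $\sup_n\|S_n\|_\Psi<\infty$ we get $\sup_N\|\tilde S_N\|_\Psi<\infty$, and L\'evy's inequality combined with convexity of $\Psi$ yields $\|\max_{n\le N}\|\tilde S_n\|\|_\Psi\le 2\|\tilde S_N\|_\Psi$, after which monotone convergence again gives the conclusion for the symmetrized walk.

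The final step in both cases is desymmetrization. Stochastic boundedness of $(S_n)$ lets us pick vectors $m_n\in F$ with $\sup_n\|m_n\|<\infty$ and $\p(\|S_n-m_n\|\le K)\ge 3/4$ for some fixed $K$; the standard Banach-space desymmetrization bound $\p(\|S_n-m_n\|>t+K)\le 2\p(\|\tilde S_n\|>t)$ then transfers the Orlicz bound on $\sup_n\|\tilde S_n\|$ to one on $\sup_n\|S_n-m_n\|$ and, via the triangle inequality, to $\sup_n\|S_n\|$. The main obstacle I anticipate is the clean construction of the centering sequence $(m_n)$ in an abstract Banach space, where medians are not canonically defined; stochastic boundedness supplies the required uniformity, but the tail comparison has to be handled termwise before taking a supremum in $n$.
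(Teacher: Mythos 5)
Your plan is correct and genuinely different from the paper's in two respects: the symmetrization device and the desymmetrization. The paper proves (iii)$\Rightarrow$(ii) trivially, then (ii)$\Rightarrow$(i) by composing the original $L_1$ Hoffmann-J\o rgensen characterization (Theorem~\ref{thm:HJ}) with the Orlicz mean form \eqref{eq:HJTal}, then (i)$\Rightarrow$(iii) by Rademacher symmetrization around the means, L\'evy's inequality, and a desymmetrization performed by applying Lemma~\ref{le:symmetrization} inside $\ell_\infty^N(F)$ to ad hoc random elements $Z_i=(\,0,\ldots,0,X_i-\E X_i,\ldots,X_i-\E X_i\,)$, which encode running partial sums. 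You instead symmetrize by an independent copy $\tilde S_n=S_n-S_n'$ and desymmetrize by a tail comparison exploiting stochastic boundedness; this buys you a more elementary desymmetrization that does not even require $X_i$ to be integrable a priori. Both of the obstacles you flag at the end actually resolve more easily than you anticipate: stochastic boundedness of $(S_n)$ gives $K$ with $\p(\|S_n\|\le K)\ge 3/4$ uniformly, so you may simply take $m_n=0$; and the termwise bound $\p(\|S_n\|>t+K)\le\tfrac43\,\p(\|\tilde S_n\|>t)$ transfers to the running maximum because the first-entry time $\tau=\min\{n:\|S_n\|>t+K\}$ is $\sigma(X_1,\ldots,X_n)$-measurable and hence independent of $\|S_n'\|$, giving $\p(\max_{n\le N}\|S_n\|>t+K)\le\tfrac43\,\p(\max_{n\le N}\|\tilde S_n\|>t)$ and then an Orlicz bound on $\sup_n\|S_n\|$ via the usual shift-by-$K$ and convexity trick.

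The one step I would push back on is the sketched derivation of the Orlicz quantile inequality. ``Applying \eqref{eq:HJTal} conditionally on $\tau$'' does not factor cleanly: in the decomposition $\max_{n\le N}\|T_n\|\le t_0+\max_i\|Y_i\|+\max_{n\ge\tau}\|T_n-T_\tau\|$, the middle term $\max_i\|Y_i\|$ is not independent of $\{\tau=j\}$, so a direct conditional application of \eqref{eq:HJTal} is blocked. The classical first-entry-time scheme instead produces a tail product inequality of the form $\p(M>3t)\le\p(M>t)^2+\p(\max_i\|Y_i\|>t)$, which must then be iterated inside $\Psi$ --- a nontrivial extra argument. A much shorter derivation, valid for symmetric $T_n$, avoids this entirely: by the $L_1$ quantile HJ \eqref{eq:HJ-quantile} with $p=1$ and Lemma~\ref{le:embedding}, $\|T_N\|_1\le\E\max_{n\le N}\|T_n\|\le C\big(t_0+\|\max_i\|Y_i\|\|_\Psi\big)$; plug this into the mean form \eqref{eq:HJTal} to control $\|T_N\|_\Psi$, and upgrade to $\|\max_{n\le N}\|T_n\|\|_\Psi\le 2\|T_N\|_\Psi$ by L\'evy's inequality and convexity of $\Psi$. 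This is, unpacked, precisely the paper's (ii)$\Rightarrow$(i)$\Rightarrow$(iii) chain and is the cleanest route to the quantile form you need.
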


It turns out that the condition \eqref{eq:nsc} is necessary for the above theorem to hold, as stated in the following proposition, the proof of which is also deferred to Section \ref{sec:proofs-boundedness}.

\begin{prop}\label{prop:boundedness-necessity} Assume that $\Psi$ does not satisfy the condition \eqref{eq:nsc}. Then, there exists a sequence of independent, bounded, symmetric real random variables $(X_n)_{n\ge 1}$, such that:
\begin{itemize}
  \item[(i)] $(S_n)_n$ converges almost surely, in particular is stochastically bounded,
  \item[(ii)] $\sup_n |X_n| \in L_\Psi$,
  \item[(iii)] $(S_n)_n$ is not bounded in $L_\Psi$.
\end{itemize}
\end{prop}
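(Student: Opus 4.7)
The plan is to leverage Theorem \ref{thm:characterization}: since $\Psi$ does not satisfy \eqref{eq:nsc}, the contrapositive of that theorem ensures that the Hoffmann--J\o rgensen inequality \eqref{eq:HJTal} fails, for some i.i.d.\ symmetric bounded real sequence, with an arbitrarily large constant. Concretely, for every $D > 0$ there exist $N \in \mathbb{N}$ and i.i.d.\ symmetric bounded real random variables $X_1,\ldots,X_N$ such that $\|\sum_i X_i\|_\Psi > D(\|\sum_i X_i\|_1 + \|\max_i|X_i|\|_\Psi)$. Applying this with $D = 4^n$ for each $n\ge 1$, I obtain an i.i.d.\ block $(X_{n,i})_{i\le N_n}$ whose block-sum $T_n := \sum_{i \le N_n} X_{n,i}$ satisfies $\|T_n\|_\Psi > 4^n(\|T_n\|_1 + \|\max_i|X_{n,i}|\|_\Psi)$, and I will glue these blocks (realised as independent families on a product probability space) into a single sequence after an appropriate rescaling.

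Specifically, I set $\lambda_n := \|T_n\|_\Psi/2^n$, $Y_{n,i} := X_{n,i}/\lambda_n$, and concatenate the $(Y_{n,i})$ in lexicographic order into a sequence $(Z_j)_{j\ge 1}$ of independent, bounded, symmetric real random variables. The key observation is that this choice forces $\|T'_n\|_\Psi = 2^n \to \infty$ (where $T'_n := \sum_i Y_{n,i} = T_n/\lambda_n$), while the HJ-failure gap $4^n$ drives both $\|T'_n\|_1$ and $\|\max_i|Y_{n,i}|\|_\Psi$ below $2^{-n}$. From this, (ii) is immediate via the pointwise bound $\sup_j|Z_j| \le \sum_n \max_i|Y_{n,i}|$ and the triangle inequality in $L_\Psi$; (i) follows since $\sum_n \|T'_n\|_1 < \infty$ gives a.s.\ absolute convergence of $\sum_n T'_n$ along the block-end subsequence, and L\'evy's maximal inequality combined with Borel--Cantelli makes the within-block fluctuations vanish almost surely as the block index grows, upgrading this to a.s.\ convergence of the full sequence $(S_k)_{k \ge 1}$.

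For (iii) I will invoke the classical fact that if $A$ and $B$ are independent and $B$ is symmetric, then $A+B =_d A-B$, whence the triangle inequality yields $\|B\|_\Psi \le \|A+B\|_\Psi$. Applied with $A := \sum_{m<n} T'_m$ and $B := T'_n$, this gives $\|T'_n\|_\Psi \le \|S_{K_n}\|_\Psi$ where $K_n := N_1 + \cdots + N_n$, so $\|S_{K_n}\|_\Psi \ge 2^n \to \infty$ and $(S_k)_{k\ge 1}$ is not bounded in $L_\Psi$. The main obstacle is arranging (i)--(iii) simultaneously: the HJ-failure factor $4^n$ coming from Theorem \ref{thm:characterization} provides exactly the slack needed in the scaling $\lambda_n = \|T_n\|_\Psi/2^n$ to keep $\|T'_n\|_1$ and $\|\max_i|Y_{n,i}|\|_\Psi$ summable in $n$ without collapsing the divergence of $\|T'_n\|_\Psi$.
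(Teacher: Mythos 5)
Your proof is correct and shares the concatenate-and-rescale architecture with the paper's, but it reaches the witnessing blocks and verifies the three conclusions by genuinely different means. Where the paper re-derives an explicit three-point counterexample family (the distribution $\p(Y_{i,k}=\pm u_k)=(2N_k\Psi(u_k))^{-1}$, taken verbatim from the proof of the first half of Theorem~\ref{thm:characterization}), you treat that first half as a black box: if \eqref{eq:nsc} fails, the contrapositive gives, for each $D=4^n$, some i.i.d.\ symmetric bounded block violating \eqref{eq:HJTal} with that constant, and the rescaling $\lambda_n=\|T_n\|_\Psi/2^n$ cleanly forces $\|T'_n\|_\Psi=2^n$ while pushing $\|T'_n\|_1+\|\max_i|Y_{n,i}|\|_\Psi$ below $2^{-n}$. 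For (i), the paper's Borel--Cantelli argument is crisper --- its blocks are designed so that almost surely only finitely many $X_j$ are nonzero, so $(S_n)$ is eventually constant --- whereas you obtain a.s.\ convergence via $\sum_n\E|T'_n|<\infty$ plus L\'evy's maximal inequality to damp the within-block fluctuations; your route works but needs a short additional argument (e.g., $\E\max_{K_{n-1}<k\le K_n}|S_k-S_{K_{n-1}}|\le 2\E|T'_n|$ and summability) to make the ``upgrade'' rigorous. For (iii), you use the symmetry contraction $\|B\|_\Psi\le\|A+B\|_\Psi$ for independent symmetric $B$, while the paper invokes Jensen's inequality (conditioning on the $k$-th block); these are interchangeable. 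The trade-off: your proof is more modular and does not require re-opening the explicit construction, while the paper's version yields a completely explicit counterexample and a one-line convergence argument.
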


\section{Proofs\label{sec:proofs}}

\subsection{Main technical tools and auxiliary lemmas}

In this section we will present some auxiliary facts to be used in the proof of Theorem \ref{thm:characterization}. Let us start with the following lemma \cite[Lemma 8]{MR1048946}.

\begin{lemma}
Given a sequence $(X_j)_{j \le N}$ of independent $F$-valued random variables and $t > 0$, such that $\p  (\max_{j \le N}\|X_j\| \ge t) \le \frac{1}{2}$, it holds that
\begin{displaymath}
\sum_{j \le N}\p  (\|X_j\| \ge t) \le 2\p  (\max_{j \le N}\|X_j\| \ge t).
\end{displaymath}
\end{lemma}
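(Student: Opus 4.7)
The plan is to reduce the inequality to a one-variable calculus exercise, exploiting independence to collapse the dependence on the Banach space $F$ and on $N$ entirely. Set $p_j = \p(\|X_j\| \ge t)$ and $s = \sum_{j \le N} p_j$. Independence gives
\begin{equation*}
\p\Big(\max_{j \le N}\|X_j\| \ge t\Big) \;=\; 1 - \prod_{j \le N}(1 - p_j),
\end{equation*}
so the target inequality rewrites as $s \le 2\bigl(1 - \prod_j(1-p_j)\bigr)$, and the standing hypothesis rewrites as $\prod_j(1 - p_j) \ge \tfrac{1}{2}$.

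First I would apply the elementary bound $1 - x \le e^{-x}$ for $x \in [0,1]$ to obtain $\prod_j(1-p_j) \le e^{-s}$. Feeding this into the hypothesis yields $e^{-s} \ge \tfrac{1}{2}$, i.e.\ $s \le \ln 2$. The same estimate provides a lower bound on the right-hand side of the target:
\begin{equation*}
2\Big(1 - \prod_{j\le N}(1-p_j)\Big) \;\ge\; 2\bigl(1 - e^{-s}\bigr),
\end{equation*}
so it is enough to verify $s \le 2(1 - e^{-s})$ throughout $[0, \ln 2]$.

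The remaining step is a one-line monotonicity check: the function $g(s) = 2(1-e^{-s}) - s$ satisfies $g(0) = 0$ and $g'(s) = 2e^{-s} - 1 \ge 0$ precisely when $s \le \ln 2$, so $g$ is non-decreasing on $[0,\ln 2]$ and hence non-negative there. Combining the two displays gives $s \le 2\bigl(1 - \prod_j(1-p_j)\bigr)$, which is the claim.

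I do not anticipate any serious obstacle. The only real idea is the contraction of all individual probabilities into the scalar $s$ via $1 - x \le e^{-x}$; this single inequality simultaneously extracts the quantitative content of the hypothesis (namely $s \le \ln 2$) and produces the clean lower bound on the union probability needed to close the argument. Neither the Banach space structure nor the size of $N$ plays any role beyond the product formula for the complement event.
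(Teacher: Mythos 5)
Your proof is correct. The paper itself states this lemma as Talagrand's \cite[Lemma~8]{MR1048946} and gives no proof, so there is nothing in the paper to check you against line by line; let me verify your argument and then point out the proof that more commonly appears in print, for contrast.

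Every step of yours holds: independence gives $\p(\max_{j\le N}\|X_j\|\ge t)=1-\prod_{j\le N}(1-p_j)$; the bound $1-x\le e^{-x}$ yields $\prod_j(1-p_j)\le e^{-s}$, whence the hypothesis $\prod_j(1-p_j)\ge\tfrac12$ forces $e^{-s}\ge\tfrac12$, i.e.\ $s\le\ln 2$, and simultaneously $1-\prod_j(1-p_j)\ge 1-e^{-s}$; finally $g(s)=2(1-e^{-s})-s$ has $g(0)=0$ and $g'(s)=2e^{-s}-1\ge 0$ on $[0,\ln 2]$, so $g\ge 0$ there, closing the argument. The route one more often sees in the literature avoids the calculus entirely via the telescoping identity
\begin{displaymath}
1-\prod_{j\le N}(1-p_j)=\sum_{j\le N}p_j\prod_{k<j}(1-p_k),
\end{displaymath}
and then observes that since every factor $1-p_k$ is at most $1$, each partial product dominates the full one, $\prod_{k<j}(1-p_k)\ge\prod_{k\le N}(1-p_k)\ge\tfrac12$, giving $1-\prod_j(1-p_j)\ge\tfrac12\sum_j p_j$ directly. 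Your version trades that combinatorial identity for the exponential bound and a one-variable monotonicity check; both are equally elementary, both invoke independence only through the product formula for the complement of $\{\max_j\|X_j\|\ge t\}$, and both produce the sharp factor $2$.
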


The next lemma is a standard consequence of the Chebyshev inequality, providing tail estimates for random variables with a finite $\|\cdot\|_{\Psi}$ norm. We skip the easy proof.

\begin{lemma}\label{le:Orlicz-Chebyshev}
Let $Y$ be a nonnegative random variable and $\Psi = e^{\psi}-1$ be an Orlicz function.  Then for every $t > 0$,
\begin{displaymath}
  \p(Y \ge t) \le 2\exp(-\psi(t/\|Y\|_{\Psi})).
\end{displaymath}
\end{lemma}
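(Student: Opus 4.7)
The plan is to reduce the inequality to a direct application of Markov's inequality applied to an exponential moment. Let $a := \|Y\|_{\Psi}$, and note that the boundary cases are trivial: if $a = +\infty$, then $\psi(t/a) = \psi(0) = 0$ and the right-hand side is $2$, while if $a = 0$ then $Y = 0$ almost surely (by Fatou, $\E\Psi(Y/a_n) \le 1$ for every $a_n>0$ forces $Y \equiv 0$), and the right-hand side can be interpreted as $0$ using $\lim_{x\to\infty}\psi(x) = \infty$. So assume $0 < a < \infty$. The first step is to verify that $\E \Psi(Y/a) \le 1$. Taking a decreasing sequence of admissible values $a_n \downarrow a$ in the definition \eqref{eq:Orlicz-norm}, each satisfies $\E \Psi(Y/a_n) \le 1$; since $\Psi$ is nondecreasing and continuous, $\Psi(Y/a_n) \uparrow \Psi(Y/a)$ pointwise, and monotone convergence yields $\E \Psi(Y/a) \le 1$. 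Rewriting via $\Psi = e^{\psi}-1$ gives $\E e^{\psi(Y/a)} \le 2$.

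The second step is a Chebyshev argument. Since $\Psi$ is strictly increasing with $\Psi(0) = 0$, so is $\psi$, and therefore for any $t > 0$,
\[
\{Y \ge t\} = \{\psi(Y/a) \ge \psi(t/a)\} = \{e^{\psi(Y/a)} \ge e^{\psi(t/a)}\}.
\]
Markov's inequality applied to the nonnegative random variable $e^{\psi(Y/a)}$ then yields
\[
\p(Y \ge t) \le e^{-\psi(t/a)}\, \E e^{\psi(Y/a)} \le 2e^{-\psi(t/a)},
\]
which is the claimed bound. I do not anticipate any real obstacle; the only mildly delicate points are the boundary cases of $\|Y\|_{\Psi}$ and the passage from the infimum in \eqref{eq:Orlicz-norm} to the bound $\E\Psi(Y/\|Y\|_{\Psi}) \le 1$, both of which are routine.
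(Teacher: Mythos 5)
The paper explicitly omits the proof, describing it as a standard consequence of Chebyshev's inequality, and your argument is exactly that standard argument: use monotone convergence to pass from the infimum in the Luxemburg norm to $\E\Psi(Y/\|Y\|_\Psi)\le 1$, rewrite this as $\E e^{\psi(Y/\|Y\|_\Psi)}\le 2$, and apply Markov's inequality to the exponential. Your handling of the degenerate values of $\|Y\|_\Psi$ and the step showing the infimum is attained are both correct, so this is fine.
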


\medskip

Another result which we will need is a version of \eqref{eq:Talagrand-convex} with better constants and providing concentration around mean, rather than the median. We remark that we could alternatively use just \eqref{eq:Talagrand-convex}, since subgaussian concentration around mean and around median are equivalent, but the version we provide here can be applied in a more straightforward way. It can be easily obtained, e.g.,  from \cite[Corollary 1]{MR1756011} (see also the comments before Corollary 3 therein). We note that \cite{MR1756011} deals in fact with a much more general dependent situation. A version of the theorem below for convex functions was obtained earlier in \cite[Corollary 1.3]{MR1399224}.

\begin{theorem}\label{thm:convex-Ledoux} If $X_1,\ldots,X_n$ are independent random variables with values in $[-1,1]$ and $f\colon [-1,1]^n \to \R$ is a 1-Lipschitz convex or concave function, then for any $t > 0$,
\begin{displaymath}
  \p(f(X_1,\ldots,X_n) \ge \E f(X_1,\ldots,X_n) + t) \le e^{-t^2/8}.
\end{displaymath}
\end{theorem}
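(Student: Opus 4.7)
My plan is to use the entropy method combined with a convexity-specific Efron--Stein estimate that exploits the one-sided structure of convex functions. The statement is symmetric between convex and concave (a parallel argument using a supremum in place of the infimum in the definition of the auxiliary variables $Z_i$ below handles the concave case), so I describe the convex case only.

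First I would set $Z = f(X_1,\ldots,X_n)$ and, for each $i \le n$,
\begin{equation*}
Z_i = \inf_{x \in [-1,1]} f(X_1,\ldots,X_{i-1},x,X_{i+1},\ldots,X_n),
\end{equation*}
so $Z \ge Z_i$ almost surely. Let $g_i$ be a measurable selection from the subdifferential of $f$ in the $i$-th coordinate at $X$. The subgradient inequality $f(X) - f(X_1,\ldots,x,\ldots,X_n) \le g_i(X_i - x)$, valid for all $x \in [-1,1]$, yields $Z - Z_i \le 2|g_i|$ upon taking the supremum in $x$. Because $f$ is $1$-Lipschitz, $\sum_{i=1}^n g_i^2 \le 1$, which gives the deterministic bound
\begin{equation*}
\sum_{i=1}^n (Z - Z_i)^2 \le 4.
\end{equation*}

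Next I would invoke Massart's one-sided modified logarithmic Sobolev inequality for functions of independent variables, which asserts that for every $\lambda > 0$,
\begin{equation*}
\Ent(e^{\lambda Z}) \le \sum_{i=1}^n \E\bigl[e^{\lambda Z}\bigl(e^{-\lambda(Z-Z_i)} + \lambda(Z-Z_i) - 1\bigr)\bigr].
\end{equation*}
Since $Z - Z_i \ge 0$ and $e^{-u}+u-1 \le u^2/2$ for $u \ge 0$, combining with the previous bound gives $\Ent(e^{\lambda Z}) \le 2\lambda^2\, \E e^{\lambda Z}$. The classical Herbst argument --- dividing by $\lambda^2 \E e^{\lambda Z}$, recognizing the derivative $\frac{d}{d\lambda}\bigl[\lambda^{-1}\log \E e^{\lambda Z}\bigr]$, and integrating from $0$ using $\lambda^{-1}\log \E e^{\lambda Z} \to \E Z$ as $\lambda \to 0^+$ --- then yields $\E e^{\lambda(Z - \E Z)} \le e^{2\lambda^2}$ for every $\lambda > 0$. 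Chernoff's inequality with the optimal choice $\lambda = t/4$ delivers the desired bound $e^{-t^2/8}$.

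The main obstacle I anticipate is the rigorous handling of the subgradient step when $f$ is merely convex and Lipschitz rather than smooth: producing a measurable selection from the subdifferential in each coordinate and verifying $\sum_i g_i^2 \le 1$ at almost every point requires Rademacher's theorem together with the Lipschitz assumption. The cleanest way to avoid these technicalities is to first mollify $f$ by a convex smoothing that preserves convexity and the Lipschitz constant, apply the entropy argument to the smooth approximant where the bounds on $Z - Z_i$ and on $\sum_i g_i^2$ are immediate from ordinary calculus, and then pass to the limit; the remaining steps are routine.
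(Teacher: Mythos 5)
Your argument for the convex case is correct and is precisely the entropy-method proof going back to Ledoux, which the paper itself cites (\cite[Corollary 1.3]{MR1399224}): tensorize entropy, apply the variational fact $\Ent_i(e^{\lambda Z}) \le \E_i\big[e^{\lambda Z}\phi(-\lambda(Z - Z_i))\big]$ with $\phi(u) = e^u - u - 1$, use the self-bounding estimate $\sum_i(Z - Z_i)^2 \le 4$ coming from the subgradient at $X$, and then Herbst. Note that the paper does not actually give a proof of Theorem~\ref{thm:convex-Ledoux}; it cites Samson \cite[Corollary 1]{MR1756011} (whose argument goes through transportation-cost inequalities and coupling, not the entropy method) for the full statement covering both the convex and the concave case, and credits Ledoux for the convex case only.

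The genuine gap is in your treatment of the concave case. You claim a ``parallel argument using a supremum in place of the infimum'' in the definition of $Z_i$. If $Z_i = \sup_{x} f(\ldots,x,\ldots)$, then $Z_i \ge Z$, and for $\lambda > 0$ the quantity $-\lambda(Z - Z_i) = \lambda(Z_i - Z)$ fed into $\phi$ is nonnegative. But the elementary inequality you invoke, $\phi(u) = e^{u} - u - 1 \le u^2/2$, is valid only for $u \le 0$; for $u \ge 0$ the reverse inequality $\phi(u) \ge u^2/2$ holds, so the modified log-Sobolev bound does not collapse to $\Ent(e^{\lambda Z}) \le 2\lambda^2\E e^{\lambda Z}$ and Herbst's argument breaks down. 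This is not a technicality that mollification can fix: the upper tail of a concave $1$-Lipschitz function is genuinely harder than that of a convex one, because the convex trick of bounding $f(X) - f(\cdot,x,\cdot)$ by a single subgradient evaluated at $X$ (giving an $\ell_2$-summable set of increments) has no analogue for a supergradient in the concave direction. Replacing $f$ by $-f$ only transfers the convex upper-tail bound to the concave \emph{lower} tail, not the upper one. To complete the concave case one needs a different mechanism, such as Samson's $T_2$-type transportation inequality and its coupling consequences, which is the route the paper takes via \cite{MR1756011}.
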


The next lemma is a simple estimate on $\psi^{-1}$ for $\psi$ which satisfies the condition \eqref{eq:nsc}.

\begin{lemma} \label{le:phi-inverse} Let $\Psi=e^\psi-1$ be strictly increasing and satisfy the condition \eqref{eq:nsc}. Then, $\widehat{K}\ln(1+x)\psi^{-1}(xy) \ge x\psi^{-1}(y)$ for some $\widehat{K}>0$ and every $x,y \ge \widehat{K}$.
\end{lemma}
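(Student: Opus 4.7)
The strategy is to work with the inverse formulation. For $x,y \ge \widehat{K}$ set $u := \psi^{-1}(y)$ and $t := \frac{x}{\widehat{K}\ln(1+x)}$; then the desired inequality $\widehat{K}\ln(1+x)\,\psi^{-1}(xy) \ge x\psi^{-1}(y)$ is the same as $\psi^{-1}(xy) \ge tu$, and since $\psi$ is strictly increasing, it is equivalent to
\[
\psi(tu) \le xy = x\psi(u).
\]
Bounding $\psi(tu)$ from above is exactly what the assumption \eqref{eq:nsc} is designed to do, so the plan is to apply it with the pair $(s,u) \mapsto (t,u)$ and then, after a routine computation, choose $\widehat{K}$ large enough to absorb the error terms.

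Let $K$ denote the constant from \eqref{eq:nsc}; we may harmlessly assume $K \ge 1$. First I would pick $\widehat{K} \ge \psi(K)$, which guarantees $u = \psi^{-1}(y) \ge \psi^{-1}(\widehat{K}) \ge K$, so the lower bound on $u$ needed in \eqref{eq:nsc} is automatic. In the principal regime $t \ge K$, \eqref{eq:nsc} yields
\[
\psi(tu) \le Kt\ln(1+t) + Kt\psi(u) \le \frac{Kx}{\widehat{K}} + \frac{Kxy}{\widehat{K}\ln(1+x)},
\]
where I used that $t \le x$ (so $\ln(1+t) \le \ln(1+x)$ and $t\ln(1+x) = x/\widehat{K}$) together with $\psi(u) = y$. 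Requiring $\widehat{K}\ln(1+\widehat{K}) \ge 2K$ and $\widehat{K}^2 \ge 2K$, and using $x \ge \widehat{K}$ and $y \ge \widehat{K}$, forces each of the two summands to be at most $xy/2$, which gives $\psi(tu) \le xy$.

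The single subtlety is that $t \ge K$ can fail on the small-$x$ end, since the function $x \mapsto x/\ln(1+x)$ is only eventually large. In the complementary range $t < K$ I would fall back on monotonicity: $\psi(tu) \le \psi(Ku)$, and apply \eqref{eq:nsc} to the legitimate pair $(K,u)$ to obtain $\psi(Ku) \le K^2\ln(1+K) + K^2 y$. Choosing $\widehat{K}$ large enough relative to $K^2\ln(1+K)$ (e.g.\ $\widehat{K} \ge 2K^2$ combined with $\widehat{K}^2 \ge 2K^2\ln(1+K)$) makes this bound at most $xy$ for all $x,y \ge \widehat{K}$. The entire argument is thus a two-case application of \eqref{eq:nsc}, and the main obstacle—a very mild one—is merely to collect the various threshold conditions on $\widehat{K}$ into a single explicit lower bound depending only on $K$.
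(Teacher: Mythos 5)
Your proof is correct and takes essentially the same route as the paper's: apply \eqref{eq:nsc} and invert via strict monotonicity after an appropriate change of variables. The paper starts from $\psi(su)\le K(s\ln(1+s)+s\psi(u))$, substitutes $t=Ks$, then $x=t\ln(1+t)$; you run the same substitution "in reverse," starting from the target $(x,y)$ and defining $t = x/(\widehat{K}\ln(1+x))$, which makes the threshold bookkeeping (including the case $t<K$) somewhat more explicit than the paper's "for $x,y$ large enough, then adjust $\widehat{K}$."
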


\begin{proof} Since $\psi$ is strictly increasing so is $\psi^{-1}$, and hence $su \le \psi^{-1}\Big( K \Big(s\ln(1+s) + s\psi(u)\Big)\Big)$. Let $t = Ks$ and $y = \psi(u)$ to get $t \psi^{-1}(y) \le K \psi^{-1}\Big( t\ln\Big(1+\frac{t}{K}\Big) + ty\Big) \le K\psi^{-1}(t\ln(1+t)y)$, where in the last inequality we assumed without loss of generality that $K \ge 1$. Lastly, let $x = t\ln(1+t)$ and notice that then $t \ge \frac{cx}{\ln(1+x)}$ for some constant $c>0$, if $t$ (or, equivalently, $x$) is large enough. In particular, $x\psi^{-1}(y) \le \frac{K}{c} \ln(1+x)\psi^{-1}(xy)$ for $x,y$ large enough and by adjusting the constant $\widehat{K} \ge \frac{K}{c}$ we obtain $x\psi^{-1}(y) \le \widehat{K} \ln(1+x)\psi^{-1}(xy)$ for any $x,y \ge \widehat{K}$.
\end{proof}

Our next lemma is a symmetrization inequality for Orlicz norms.

\begin{lemma}\label{le:symmetrization} If $X_1,\ldots,X_N$ are independent, $F$-valued, mean zero random variables, and $\varepsilon_1,\ldots,\varepsilon_N$ are independent Rademacher variables, independent of $X_1,\ldots,X_N$, then
\begin{displaymath}
  \frac{1}{2}\Big\|\sum_{i=1}^N \varepsilon_i X_i\Big\|_{\Psi} \le \Big\|\sum_{i=1}^N X_i\Big\|_{\Psi} \le 2 \Big\|\sum_{i=1}^N \varepsilon_i X_i\Big\|_{\Psi}.
\end{displaymath}
\end{lemma}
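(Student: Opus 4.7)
The plan is to run the classical Jensen/independent-copy symmetrization argument, adapted to the Orlicz norm defined via the Luxemburg functional. The single observation that drives both inequalities is that, for any Banach-valued random vectors $Y,Z$ and any $a>0$, the map $y\mapsto \Psi(\|y\|/a)$ is convex (since $\Psi$ is convex nondecreasing and $\|\cdot\|$ is convex), so Jensen's inequality conditional on one variable and the midpoint inequality both apply cleanly.

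Let $X_1',\ldots,X_N'$ be an independent copy of $(X_1,\ldots,X_N)$, independent also of the Rademacher sequence. For the upper bound, I would fix $a>0$, use $\mathbb E X_i'=0$, and write
\begin{align*}
\E \Psi\Big(\Big\|\sum_{i=1}^N X_i\Big\|\big/a\Big)
&=\E \Psi\Big(\Big\|\E'\sum_{i=1}^N(X_i-X_i')\Big\|\big/a\Big) \\
&\le \E \Psi\Big(\Big\|\sum_{i=1}^N(X_i-X_i')\Big\|\big/a\Big)
=\E \Psi\Big(\Big\|\sum_{i=1}^N \varepsilon_i(X_i-X_i')\Big\|\big/a\Big),
\end{align*}
where the first inequality is Jensen's applied to $\E'$ (expectation only in the $X_i'$) and the last equality uses that $(X_i-X_i')_{i\le N}$ is symmetric and independent of $(\varepsilon_i)$. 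Then by the triangle inequality inside $\Psi$, convexity of $\Psi$, and the fact that $\varepsilon_i X_i\stackrel{d}{=}\varepsilon_i X_i'$, the last expression is at most $\E \Psi\bigl(2\|\sum_i \varepsilon_i X_i\|/a\bigr)$. Choosing $a=2\|\sum_i \varepsilon_i X_i\|_\Psi$ makes this last expectation $\le 1$ by the definition of the Orlicz norm, hence $\|\sum_i X_i\|_\Psi\le 2\|\sum_i \varepsilon_i X_i\|_\Psi$.

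For the lower bound I would run essentially the mirror image. Starting from the identity $\sum_i \varepsilon_i X_i=\E'\sum_i \varepsilon_i(X_i-X_i')$ (valid because $\E X_i'=0$), Jensen's inequality applied in $\E'$ and then the symmetry argument give
\[
\E \Psi\Big(\Big\|\sum_{i=1}^N \varepsilon_i X_i\Big\|\big/a\Big)
\le \E \Psi\Big(\Big\|\sum_{i=1}^N \varepsilon_i(X_i-X_i')\Big\|\big/a\Big)
=\E \Psi\Big(\Big\|\sum_{i=1}^N (X_i-X_i')\Big\|\big/a\Big),
\]
and then convexity of $\Psi(\|\cdot\|/a)$ applied to the midpoint of $2\sum X_i$ and $-2\sum X_i'$ bounds this by $\E \Psi\bigl(2\|\sum_i X_i\|/a\bigr)$. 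Setting $a=2\|\sum_i X_i\|_\Psi$ yields $\|\sum_i \varepsilon_i X_i\|_\Psi\le 2\|\sum_i X_i\|_\Psi$.

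There is essentially no obstacle here; the only things to watch are the order in which Jensen's inequality and the convex splitting are applied (so that the normalising constant $a$ ends up attached to $2\sum X_i$ or $2\sum \varepsilon_i X_i$ as appropriate), and the implicit use of Fubini/conditioning which is justified by the independence and Radon hypotheses made throughout the paper.
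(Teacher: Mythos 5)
Your argument is correct and amounts to the same reduction the paper makes: establishing the two inequalities $\E\Psi(\|\sum_i X_i\|/a)\le\E\Psi(2\|\sum_i\varepsilon_i X_i\|/a)$ and $\E\Psi(\|\sum_i\varepsilon_i X_i\|/a)\le\E\Psi(2\|\sum_i X_i\|/a)$ for all $a>0$ and then reading off the norm comparison from the definition of $\|\cdot\|_\Psi$. The only difference is that the paper simply cites the classical symmetrization inequalities for convex functions (Lemma 6.3 in Ledoux--Talagrand), whereas you reprove them via the independent-copy/Jensen argument; both are the standard route.
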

\begin{proof}
By the classical symmetrization inequalities (see, e.g., \cite[Lemma 6.3]{MR1102015}), for any $a > 0$,
\begin{displaymath}
\E \Psi\Big(\frac{\|\sum_{i=1}^N \varepsilon_i X_i\|}{2a}\Big) \le \E \Psi\Big(\frac{\|\sum_{i=1}^N X_i\|}{a}\Big) \le \E \Psi\Big(2 \frac{\|\sum_{i=1}^N \varepsilon_i X_i\|}{a}\Big),
\end{displaymath}
which easily implies the lemma.
\end{proof}

We will also need the following two easy lemmas concerning the relation between the first moment and the Orlicz norm of random variables.

\begin{lemma}\label{le:embedding}
There exists a constant $C_\Psi \in (0,\infty)$ such that for all $X \in L_\Psi(F)$, \begin{displaymath}
\|X\|_{1} \le C_\Psi \|X\|_\Psi.
\end{displaymath}
\end{lemma}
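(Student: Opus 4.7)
The plan is to use convexity of $\Psi$ directly via Jensen's inequality. Set $a = \|X\|_\Psi$ (assuming without loss of generality that $a > 0$, the case $a = 0$ being trivial by monotone convergence). Since $\Psi$ is continuous, increasing, and $a_n \downarrow a$ can be chosen with $\E \Psi(\|X\|/a_n) \le 1$, monotone convergence gives $\E \Psi(\|X\|/a) \le 1$ as well.

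Then I would apply Jensen's inequality to the convex function $\Psi$ to obtain
\begin{displaymath}
\Psi\Bigl(\E \|X\|/a\Bigr) \le \E \Psi(\|X\|/a) \le 1.
\end{displaymath}
Since $\Psi$ is strictly increasing with $\Psi(0)=0$ and $\lim_{x\to\infty}\Psi(x) = \infty$, the inverse $\Psi^{-1}\colon [0,\infty)\to[0,\infty)$ is well-defined, and applying it yields
\begin{displaymath}
\frac{\E \|X\|}{a} \le \Psi^{-1}(1),
\end{displaymath}
i.e.\ $\|X\|_1 \le \Psi^{-1}(1)\|X\|_\Psi$. Hence the constant $C_\Psi = \Psi^{-1}(1)$ works, which is finite because $\Psi$ is an Orlicz function.

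There is no real obstacle here; the only mildly delicate point is justifying that the infimum defining $\|X\|_\Psi$ is attained (or at least that $\E\Psi(\|X\|/\|X\|_\Psi)\le 1$), which follows from monotone convergence and continuity of $\Psi$. Alternatively, to avoid even this small issue one can run the same Jensen argument with $a = \|X\|_\Psi + \varepsilon$ for arbitrary $\varepsilon > 0$ and let $\varepsilon \to 0$.
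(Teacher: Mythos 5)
Your proof is correct, and it follows a recognizably different route from the paper's. The paper exploits convexity indirectly: since $\Psi$ is convex and strictly positive for $x>0$, it admits an affine minorant $\Psi(x) \ge a_\Psi x - b_\Psi$, and plugging this into $\E\Psi(\|X\|) \le 1$ (after normalizing $\|X\|_\Psi = 1$) gives $\E\|X\| \le (1+b_\Psi)/a_\Psi$. You instead invoke Jensen's inequality head-on, getting $\Psi(\E\|X\|/a) \le \E\Psi(\|X\|/a) \le 1$ and hence $\E\|X\| \le \Psi^{-1}(1)\,\|X\|_\Psi$. Your argument is slightly shorter and yields the cleaner, explicit, and in fact optimal constant $\Psi^{-1}(1)$ (consider a constant random variable), whereas the paper's constant depends on the chosen affine minorant. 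The point you flag about whether $\E\Psi(\|X\|/\|X\|_\Psi) \le 1$ is handled correctly by monotone convergence, and your alternative $\varepsilon$-argument is also fine; either dispenses with the issue. Both proofs ultimately rest on convexity of $\Psi$, so the gap in approach is modest, but yours is the more economical one.
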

\begin{proof}
Due to homogenity, we can assume that $\|X\|_\Psi = 1$ (the case $\|X\|_\Psi = 0$ is trivial). Since $\Psi$ is convex and non-zero outside the origin, there exist constants $a_\Psi,b_\Psi > 0$ such that $\Psi(x) \ge a_\Psi x - b_\Psi$ for every $x \ge 0$. In particular, in light of the assumption $\|X\|_\Psi = 1$,
\begin{displaymath}
\mathbb E\|X\| \le \frac{1}{a_\Psi}\Big(\mathbb E\Psi(\|X\|) + b_\Psi \Big) \le \frac{1+b_\Psi}{a_\Psi} = \frac{1+b_\Psi}{a_\Psi}\|X\|_\Psi.
\end{displaymath}
\end{proof}

\begin{lemma}\label{le:mean-Orlicz}
For any $F$-valued integrable random variable $X$,
\begin{displaymath}
\| \E X\|_{\Psi} \le \max(\Psi(1),1)\|\E X\| \le \max(\Psi(1),1)\|X\|_1.
\end{displaymath}
\end{lemma}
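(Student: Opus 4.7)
The plan is to reduce the problem to a direct computation, since $\E X$ is a deterministic element of $F$ and therefore its Orlicz norm is just the Luxemburg norm of a constant vector. Specifically, from the definition \eqref{eq:Orlicz-norm} one has
\begin{displaymath}
\|\E X\|_{\Psi} = \inf\{a > 0 \colon \Psi(\|\E X\|/a) \le 1\}.
\end{displaymath}
So it suffices to exhibit a single value of $a$ satisfying the constraint, namely $a := \max(\Psi(1),1)\|\E X\|$ (the case $\E X = 0$ being trivial).

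The key step is to verify $\Psi(1/\max(\Psi(1),1)) \le 1$, which I would do by splitting into two cases. If $\Psi(1) \le 1$, then $\max(\Psi(1),1) = 1$ and the inequality reduces to the assumption. If $\Psi(1) > 1$, then $\max(\Psi(1),1) = \Psi(1)$ and we need $\Psi(1/\Psi(1)) \le 1$. Since $\Psi$ is convex with $\Psi(0) = 0$, for $t = 1/\Psi(1) \in (0,1]$ one has $\Psi(t\cdot 1) \le t \Psi(1) + (1-t)\Psi(0) = 1$, as required. Plugging this into the previous display gives the first inequality $\|\E X\|_{\Psi} \le \max(\Psi(1),1)\|\E X\|$.

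The second inequality $\|\E X\| \le \|X\|_1$ is the standard Jensen-type bound for the Bochner integral: $\|\E X\| \le \E\|X\|$. There is no real obstacle; the only point worth being careful about is the degenerate case $\E X = 0$, which is handled trivially by noting that both sides of the claimed inequality are then zero.
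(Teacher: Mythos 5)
Your proof is correct and follows essentially the same route as the paper: pick the candidate scaling $a = \max(\Psi(1),1)\|\E X\|$, check $\Psi(1/\max(\Psi(1),1)) \le 1$ by convexity of $\Psi$ together with $\Psi(0)=0$, and finish with Jensen's inequality for the Bochner integral. The paper merely compresses the case split into the phrase ``immediate consequence of convexity,'' so the only difference is your level of detail.
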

\begin{proof}
The proof is an immediate consequence of convexity of $\Psi$. Indeed, \begin{displaymath}
\Psi\Big(\frac{\| \E X\| }{\|\E X\| \max(\Psi(1),1)}\Big) = \Psi\Big(\frac{1}{\max(\Psi(1),1)}\Big) \le 1,
\end{displaymath}
i.e., $\| \E X\|_{\Psi} \le \max(\Psi(1),1) \|\E X\|$. The second estimate follows from Jensen's inequality.
\end{proof}

Finally let us pass to the most important ingredient of the proof, which is a modification of Talagrand's inequality stated as the Basic Estimate (eq. (2.5)) in \cite{MR1048946}.

\begin{lemma}\label{le:crucial-lemma}
Let $X_1,\ldots,X_N$ be independent $F$-valued random variables and $\varepsilon_1,\ldots,\varepsilon_N$ be independent Rademacher variables, independent of the sequence $(X_i)_{i\le N}$. Let $M = \E \|\sum_{i=1}^N \varepsilon_i X_i\|$ and let $Y_1,\ldots,Y_N$ be the non-increasing rearrangement of $\|X_1\|,\ldots,\|X_N\|$. Then for any $q,k \in \mathbb N_+$ and $u,u' > 0$,
\begin{equation}\label{eq:crucial-inequality}
\p\Big( \Big\| \sum_{i \le N} \varepsilon_i X_i \Big\| \ge q^2M + u + u'\Big) \le \exp\Big(-\frac{u^2}{16q^3M^2}\Big) + \frac{4}{q^{k+1}} + \p\Big( \sum_{r \le k}Y_r \ge u'\Big).
\end{equation}
\end{lemma}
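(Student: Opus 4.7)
My plan is to adapt Talagrand's proof of his Basic Estimate (2.5) in \cite{MR1048946}. Truncate at level $t := qM$: define the (random) set $I := \{i : \|X_i\| > qM\}$ and split $S := \sum_i \varepsilon_i X_i = S_s + S_\ell$, where $S_s := \sum_{i \notin I}\varepsilon_i X_i$ and $S_\ell := \sum_{i \in I}\varepsilon_i X_i$. On the event $\{|I| \le k\}$ one has the deterministic bound $\|S_\ell\| \le \sum_{i \in I}\|X_i\| \le \sum_{r \le k} Y_r$, so a union bound reduces \eqref{eq:crucial-inequality} to two subgoals: (i) $\p(|I| > k) \le 4/q^{k+1}$, and (ii) $\p(\|S_s\| \ge q^2 M + u) \le \exp(-u^2/(16 q^3 M^2))$.

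For (i), Markov's inequality yields $\p(\|S\| \ge qM/2) \le 2/q$, and Levy's maximal inequality for symmetric $F$-valued partial sums (combined with $\max_i \|X_i\| \le 2\max_j \|\sum_{i \le j}\varepsilon_i X_i\|$) gives $\p(\max_i \|X_i\| > qM) \le 4/q$. For $q$ large enough that this probability is $\le 1/2$, the lemma stated just before Lemma \ref{le:Orlicz-Chebyshev} yields $\sum_i \p(\|X_i\| > qM) \le 8/q$; independence of the indicators then gives $\p(|I| > k) \le \E\binom{|I|}{k+1} \le (8/q)^{k+1}/(k+1)!$, which is at most $4/q^{k+1}$ after inflating the truncation level by an absolute constant (the small-$q$ regime being trivial since $4/q^{k+1} \ge 1$ there). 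For (ii), condition on $(X_i)_i$: the map $\varepsilon \mapsto \|S_s(\varepsilon)\|$ is convex on $[-1,1]^{|I^c|}$ and Lipschitz in the Euclidean metric with constant $L := (\sum_{i \in I^c}\|X_i\|^2)^{1/2} \le (qM \cdot \sum_{i \in I^c}\|X_i\|)^{1/2}$, so Theorem \ref{thm:convex-Ledoux} applied to $L^{-1}\|S_s\|$ gives $\p_\varepsilon(\|S_s\| \ge \E_\varepsilon\|S_s\| + v) \le \exp(-v^2/(8L^2))$. The contraction principle bounds $\E_\varepsilon\|S_s\| \le \E_\varepsilon\|S\|$ (hence of mean at most $M$ in $X$), and choosing $v := q^2 M - qM + u \ge u$ delivers the subgaussian estimate after integrating against the law of $X$.

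The main obstacle is the simultaneous upgrade of the first-moment bounds on the random quantities $\E_\varepsilon\|S_s\|$ and $L$ to tail estimates of order $1/q^{k+1}$, needed to fit within the $4/q^{k+1}$ budget of (ii). A vanilla Markov inequality only produces $1/q$; the sharper order is obtained either by iterating the truncation on dyadic scales $2^j M$ (exploiting independence of the level-set indicators on each scale to concentrate both $L^2$ and $\E_\varepsilon\|S_s\|$), or, as in Talagrand's original proof of (2.5) in \cite{MR1048946}, by invoking his isoperimetric inequality for product measures directly, which automatically packages control of the conditional mean and the Lipschitz constant at the required rate. Assembling the three contributions — the probabilities of $\{|I| > k\}$, of $\{\sum_{r \le k}Y_r \ge u'\}$, and of the subgaussian tail event for $\|S_s\|$ — then yields \eqref{eq:crucial-inequality}.
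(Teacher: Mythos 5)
Your decomposition $I=\{i:\|X_i\|>qM\}$ is not the one the paper uses, and the gap you flag at the end is real and cannot be patched at this level of the argument. The paper does not split the index set by the size of $\|X_i\|$. Instead, it applies Talagrand's approximation-by-$q$-points theorem (Theorem~\ref{thm:q-point}) to the \emph{good} set $A=\{\omega:\E_\varepsilon\|\sum_i\varepsilon_iX_i(\omega)\|\le qM\}$. On the high-probability event $H(A,q,k)$ one finds $q$ points $x^1,\dots,x^q\in A$ and a partition $\{1,\dots,N\}=I\cup J$ with $|J|\le k$ and $X_i(\omega)=X_i(x^\ell)$ for every $i\in I_\ell\subset I$. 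Because each $x^\ell$ lies in $A$, one gets simultaneously $\E_\varepsilon\|\sum_{i\in I}\varepsilon_iX_i(\omega)\|\le q^2M$ and, via Khintchine applied to norming functionals of points in $A$, $\sup_{\|\phi\|\le1}\sum_{i\in I}\phi^2(X_i(\omega))\le 2q^3M^2$. Both the conditional mean and the Lipschitz constant are therefore pinned \emph{deterministically} on $H(A,q,k)$, and the isoperimetric theorem supplies $\p(H(A,q,k)^c)\le 4/q^{k+1}$ in one stroke.

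In your scheme the analogous deterministic control is simply unavailable. For $S_s=\sum_{i\notin I}\varepsilon_iX_i$ the quantities $\E_\varepsilon\|S_s\|$ and $L=\big(\sum_{i\notin I}\|X_i\|^2\big)^{1/2}$ are genuine random variables of $X$ whose only a priori moment bound is $\E\,\E_\varepsilon\|S_s\|\le M$ and, by Khintchine plus contraction, $\E L\le\sqrt2\,M$. Markov then only gives $\p(\E_\varepsilon\|S_s\|>qM)\le1/q$ and $\p(L>\sqrt2\,qM)\le1/q$, so after integrating over $X$ your subgoal~(ii) would come with an additive error of order $1/q$, which destroys the $4/q^{k+1}$ budget (and hence the entire point of the lemma when $q$ is taken large as a function of $t$ in the main proof). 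Truncating at a larger level $CqM$, or iterating over dyadic scales, does not fix this: the obstruction is not in controlling $|I|$ but in controlling the conditional mean and Lipschitz constant on the complement, and those are not functions of the level sets $\{\|X_i\|>2^jqM\}$ alone. The isoperimetric theorem is doing irreplaceable work here; it is not an optional shortcut but the only mechanism in the argument that upgrades a crude $1-1/q$ bound on the good set $A$ to a $1-4/q^{k+1}$ bound on the set where the relevant conditional statistics are under deterministic control. Your subgoal~(i), on the other hand, is fine up to constants (via the lemma preceding Lemma~\ref{le:Orlicz-Chebyshev}), but it plays no role in the paper's proof.
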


The proof of the above lemma is based heavily on the isoperimetric ideas due to Talagrand. We will provide the full argument for completeness. Let us note that a slightly weaker variant of Lemma \ref{le:crucial-lemma}, still sufficient for our purposes, could be derived from the original arguments from \cite{MR1048946}, starting with the isoperimetric inequality given in Theorem 7 therein, often referred to as \emph{approximation by $q$ points}. We however prefer to use instead a version of Theorem 7 stated in one of subsequent papers by Talagrand, i.e., \cite[Theorem 8.1]{MR1387624}, which is proved by simpler arguments and corresponds more closely to the way approximation by $q$ points has been presented since the publication of Talagrand's three seminal papers \cite{MR1361756,MR1419006,MR1387624} on concentration inequalities in the mid-90s.

Before we proceed, let us explain the main differences with the Basic Estimate \cite[eq. (2.5)]{MR1048946} of Talagrand.  In the Basic Estimate both $q^2$ and $q^3$ are replaced by $q$, however the inequality holds under a restriction $k \ge q$ and the second term on the right-hand side of \eqref{eq:crucial-inequality} is slightly worse. This is enough to provide a characterization of functions $\Psi(x) = e^{\psi(x)} - 1$, satisfying \eqref{eq:HJTal} under the assumption $\psi(x) = x\xi(x)$ for an increasing function $\xi$ as well as to obtain the inequality for other special cases considered in \cite{MR1048946}, i.e., for $\Psi(x) = x^p$ and $\Psi(x) = e^{x^\alpha} -1$. However, to provide a characterization in full generality, one needs to adjust $q$ and $k$ to the \emph{geometry} of $\Psi$ and it may happen that for the right choice of parameters $q,k$ one has $q \ge k$. At the same time, as will be seen in the proof of Theorem \ref{thm:characterization}, the worse exponents of $q$ in Lemma \ref{le:crucial-lemma} do not pose a problem in the argument.

Let us first introduce the main result by Talagrand, the proof of Lemma \ref{le:crucial-lemma} will be based on, namely \cite[Theorem 8.1]{MR1387624} (see also \cite[Theorem 4.12]{MR1849347}). Since Radon random variables can be approximated by variables taking only finitely many values, below we will not pay attention to measurability issues. Passing from a version of Lemma \ref{le:crucial-lemma} for discrete variables to the general one is standard. Below by $|\cdot|$ we denote the cardinality of a set.

\begin{theorem}\label{thm:q-point}
Let $(\Omega,\mathcal{F})$ be a measurable space and let $\p$ be a product probability measure on $\Omega^N$. For $A \subset \Omega^N$ and $\omega \in \Omega^N$ define
\begin{displaymath}
f_q(A,\omega) = \inf \{ | \{ i \le N : \omega_i \not \in \{x_i^1,...,x_i^q\} \}| : x^1,...,x^q \in A\}.
\end{displaymath}
Then for any integer $q \ge 2$,
\begin{displaymath}
\p( f_q(A,\cdot) \ge k) \le \frac{1}{q^k \p(A)^q}.
\end{displaymath}

\end{theorem}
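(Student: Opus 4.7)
My plan is to prove Talagrand's classical $q$-point isoperimetric inequality by induction on $N$, following the argument of \cite{MR1387624}. First, I would reduce the tail bound to a moment statement via Markov's inequality: since $q^{f_q(A,\omega)} \ge q^k$ on the event $\{f_q(A,\omega) \ge k\}$, it suffices to establish a bound of the form $\int q^{f_q(A,\cdot)}\, d\p \le \p(A)^{-q}$, from which the theorem follows. The base case $N=1$ is immediate, since $f_q(A,\cdot)$ takes only the values $0$ on $A$ and $1$ on $A^c$, so the moment bound reduces to the scalar inequality $p + q(1-p) \le p^{-q}$ for $p = \p(A) \in (0,1]$. This follows by noting that $\varphi(p) := p^{-q} - p - q(1-p)$ is strictly convex on $(0,1]$ with $\varphi(1) = 0$ and $\varphi'(1) = -1$, so $\varphi(p) \ge 1 - p > 0$ for $p \in (0,1)$ by the tangent-line property.

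For the inductive step, I would slice $A \subseteq \Omega^{N+1}$ at the last coordinate, setting $A(\omega_{N+1}) = \{y \in \Omega^N : (y,\omega_{N+1}) \in A\}$ and letting $B \subseteq \Omega^N$ denote the projection of $A$ onto the first $N$ coordinates. A short counting argument yields two pointwise bounds: $f_q(A,(\omega,\omega_{N+1})) \le f_q(A(\omega_{N+1}),\omega)$ whenever the slice is non-empty (by choosing all $q$ approximating points inside $A \cap (\Omega^N \times \{\omega_{N+1}\})$, which automatically matches the $(N+1)$-st coordinate), and $f_q(A,(\omega,\omega_{N+1})) \le f_q(B,\omega) + 1$ always (by lifting $q$ approximating points from $B$ to $A$ and paying at most one mismatch in the last coordinate). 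Exponentiating both bounds to base $q$, taking a convex combination with weight $\lambda(\omega_{N+1}) \in [0,1]$, and invoking the inductive hypothesis on $A(\omega_{N+1})$ and $B$ reduces the $(N+1)$-step, after integrating in $\omega$ and then in $\omega_{N+1}$, to the one-variable inequality
\begin{equation*}
\int \bigl[\lambda(\omega_{N+1})\, a(\omega_{N+1})^{-q} + (1 - \lambda(\omega_{N+1}))\, q\, b^{-q}\bigr]\, d\p(\omega_{N+1}) \le \Bigl(\int a\, d\p\Bigr)^{-q},
\end{equation*}
where $a(\omega_{N+1}) = \p(A(\omega_{N+1}))$ and $b = \p(B)$.

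The main obstacle is the choice of $\lambda$. A pointwise-optimal choice (effectively taking the minimum of the two bounds on $q^{f_q}$) is insufficient: it leads to the calculus inequality $\int \min(a^{-q}, q b^{-q})\, d\p \le (\int a\, d\p)^{-q}$, which is easily seen to fail even when $a$ takes just two close but unequal values (for instance, with $q=2$ and $a \in \{b, b/\sqrt 2\}$ equally likely). One must instead use a measure-aware choice of $\lambda$ that depends non-trivially on the ratio $a(\omega_{N+1})/b$, together with a Hölder-type estimate with an $\omega_{N+1}$-dependent exponent. This optimization is the technical heart of Talagrand's original argument in \cite[Theorem 8.1]{MR1387624}, which I would follow verbatim; once the one-variable inequality is secured the induction closes mechanically and the desired tail bound on $f_q(A,\cdot)$ follows.
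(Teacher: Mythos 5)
The paper does not give its own proof of Theorem~\ref{thm:q-point}: it is quoted directly from Talagrand \cite[Theorem~8.1]{MR1387624} (see also \cite[Theorem~4.12]{MR1849347}) and then used as a black box in the proof of Lemma~\ref{le:crucial-lemma}. So there is no internal argument to compare against; the comparison is with those references, and on that score your sketch has a genuine gap.

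The Markov reduction, the base case $N=1$, and the two pointwise bounds $f_q(A,(\omega,\omega_{N+1}))\le f_q(A(\omega_{N+1}),\omega)$ (when the slice is nonempty) and $f_q(A,(\omega,\omega_{N+1}))\le f_q(B,\omega)+1$ are all correct and standard. The problem is the way you propose to combine them. Taking an arithmetic convex combination $q^{f_q(A,\cdot)}\le \lambda\, q^{f_q(A(\omega_{N+1}),\omega)}+(1-\lambda)\,q\,q^{f_q(B,\omega)}$ and invoking the inductive hypothesis twice leads, as you write, to the target
\begin{displaymath}
\int\bigl[\lambda(\omega_{N+1})\,a(\omega_{N+1})^{-q}+(1-\lambda(\omega_{N+1}))\,q\,b^{-q}\bigr]\,d\p(\omega_{N+1})\le\Bigl(\int a\,d\p\Bigr)^{-q}.
\end{displaymath}
You correctly observe, with a valid counterexample, that the pointwise-optimal weights ($\lambda\in\{0,1\}$, i.e.\ the integral of $\min(a^{-q},qb^{-q})$) fail. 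But you do not draw the full consequence: since $\lambda a^{-q}+(1-\lambda)qb^{-q}\ge\min(a^{-q},qb^{-q})$ for \emph{every} $\lambda\in[0,1]$, the minimum is the infimum of your left-hand side over all admissible $\lambda$, so once it exceeds $(\int a\,d\p)^{-q}$ \emph{no} ``measure-aware'' choice of $\lambda$ can rescue the displayed inequality. The same objection kills the geometric / H\"older-type interpolation you allude to: for each fixed $\omega_{N+1}$ the quantity $a^{-q\theta}q^{1-\theta}b^{-q(1-\theta)}$ is log-linear in $\theta$, hence minimized at an endpoint, and so is again bounded below by $\min(a^{-q},qb^{-q})$ for every $\theta\in[0,1]$. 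Thus no $\omega_{N+1}$-dependent exponent closes the argument either.

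In other words, what you defer to as ``the technical heart'' of Talagrand's proof is not an optimization inside the framework you set up; it is a structurally different step. Interpolating the two crude bounds and applying the inductive hypothesis to the slice and the projection cannot give a sharp enough intermediate estimate for $\int_{\Omega^N}q^{f_q(A,(\omega,\omega_{N+1}))}\,d\p^N(\omega)$ — one needs a genuinely better bound, and the one-variable lemma to which the induction is ultimately reduced in Ledoux's exposition (roughly $\int\min(g^{-1},q)\,d\mu\le(\int g\,d\mu)^{-q}$ for $g\colon\Omega\to(0,1]$, which does pass your test example) cannot be reached from your reduction. As written, ``I would follow Talagrand verbatim'' does not save the proof, because Talagrand's argument does not fit into the scaffolding you have built; that part must be redone, not just filled in.
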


In other words, if the set $A$ is relatively large, then for large $k$ with high probability we can cover all but at most $k$ of the coordinates of $x$ by coordinates from some ($x$-dependent) set of $q$ points from $A$. The strength of this result stems from the fact that the bound on probability does not deteriorate with growing $N$.

\begin{proof}[Proof of Lemma \ref{le:crucial-lemma}]
Without loss of generality we may assume that the variables $X_i$, $i\le N$, are defined on some product space $\Omega^{N}\times \Omega_\varepsilon$, equipped with a product measure $\p\otimes \p_\varepsilon$, where $\p$ is itself a product measure on $\Omega^N$, moreover $X_i$ depend only on the $i$-th coordinate, while $\varepsilon_i$ on the $(N+1)$-th coordinate. By $\E_\varepsilon$ we will denote integration with respect to $\p_\varepsilon$. An element of $\Omega^N$ will be denoted by $\omega = (\omega_1,\ldots,\omega_N)$.

Fix $q \in \mathbb N_+$ and define
\begin{displaymath}
A = \Big\{\omega \in \Omega^N\colon \E_{\varepsilon}\Big\|\sum_{i \le N} \varepsilon_i X_i(\omega) \Big\| \le qM\Big\}.
\end{displaymath}
By Markov's inequality,
\begin{align}\label{eq:Markov}
\p(A) \ge 1 - \frac{1}{q}.
\end{align}
Consider $F^\ast$ -- the dual of $F$. For simplicity we will denote the norm in $F^\ast$ also by $\|\cdot\|$.

For any element $\phi \in F^*$ with $\|\phi\| \le 1$ and $\omega \in A$, we get $ \E_{\varepsilon} \Big | \sum_{i \le N} \varepsilon_i \phi( X_i(\omega)) \Big| \le qM$ and so, by Khintchine's inequality (see, e.g., \cite{MR0430667} for the version with the optimal constant),

\begin{displaymath}
\sum_{i \le N}\phi^2(X_i(\omega)) = \E_\varepsilon \Big | \sum_{i \le N} \varepsilon_i \phi(X_i(\omega))\Big|^2 \le 2 \Big(\E_\varepsilon \Big | \sum_{i \le N} \varepsilon_i \phi(X_i(\omega))\Big|\Big)^2 
\le 2q^2M^2.
\end{displaymath}
Consider now any $k \in \mathbb N_+$ and $\omega \in H(A,q,k)$, where
\begin{displaymath}
H(A,q,k) = \{ \omega \in \Omega^N : \exists_{x^1,...,x^q \in A}\  | \{ i \le N : \omega_i \not \in \{x_i^1,...,x_i^q\}| \le k\}.
\end{displaymath}
By definition, we can find disjoint sets $I_\ell \subset \{i \le N : \omega_i = x_i^\ell\}$, $\ell = 1,\ldots, q$, and $J$ with $|J| \le k$ such that $I \cup J = \{1,...,N\}$, where $I = \bigcup_{\ell \le q}I_\ell$. Note that by Jensen's inequality and independence of $\varepsilon_i$'s, $I \mapsto \E_\varepsilon \| \sum_{i \in I} \varepsilon_i X_i(\omega)\|$ is an increasing function with respect to inclusion ordering on subsets $I$ of $\{1,...,N\}$. Since $x^\ell \in A$, this implies that
\begin{displaymath}
\E_\varepsilon \Big\| \sum_{i \in I_\ell} \varepsilon_i X_i(\omega)\Big\| = \E_\varepsilon \Big\|\sum_{i \in I_{\ell}} \varepsilon_i X_i(x^\ell)\Big\| \le \E_{\varepsilon} \Big\|\sum_{i \le N}\varepsilon_i X_i(x^\ell)\Big\| \le qM,
\end{displaymath}
and hence, by the triangle inequality,
\begin{displaymath}
\E_{\varepsilon} \Big\|\sum_{i \in I} \varepsilon_i X_i(\omega)\Big\| \le q^2M.
\end{displaymath}
Similarly, if $\phi \in F^*$ satisfies $\|\phi\| \le 1$, then
\begin{displaymath}
\sum_{i \in I_\ell} \phi^2 (X_i(\omega)) = \sum_{i \in I_{\ell}}\phi^2(X_i(x^\ell)) \le 2q^2M^2,
\end{displaymath}
so $\sum_{i \in I}\phi^2(X_i(\omega)) \le 2q^3M^2$.  Observe that $\sup_{\|\phi\|\le 1}(\sum_{i \in I}\phi^2(X_i(\omega)))^{1/2}$ is the Lipschitz constant of the convex function
$(\varepsilon_i)_{i \in I} \mapsto \|\sum_{i\in I} \varepsilon_i X_i(\omega)\|$. Therefore, by Theorem \ref{thm:convex-Ledoux}, for any $u > 0$,
\begin{displaymath}
\p_\varepsilon\Big( \Big\|\sum_{i \in I} \varepsilon_i X_i(\omega) \Big\| \ge q^2M + u\Big) \le \exp\Big( - \frac{u^2}{16q^3M^2}\Big).
\end{displaymath}

Note that due to $I \cup J = \{1,...,N\}, |J| \le k$ and the triangle inequality,
\begin{displaymath}
\Big\|\sum_{i \le N} \varepsilon_i X_i(\omega)\Big\| \le \Big\|\sum_{i \in I}\varepsilon_i X_i(\omega)\Big\| + \sum_{r \le k}Y_r(\omega),
\end{displaymath}
hence
\begin{displaymath}
\p_\varepsilon \Big( \Big\|\sum_{i \le N}\varepsilon_i X_i(\omega)\Big\| \ge q^2M + \sum_{r \le k}Y_r(\omega) + u\Big) \le \exp\Big( - \frac{u^2}{16q^3M^2}\Big)
\end{displaymath}
whenever $\omega \in H(A,q,k)$ and $u > 0$.
Since, $H(A,q,k)^c  = \{f_q(A,\cdot) \ge k+1\}$, Theorem \ref{thm:q-point} combined with \eqref{eq:Markov} gives
\begin{displaymath}
\p(H(A,q,k)^c) = \p( f_q(A,\cdot) \ge k+1) \le \frac{1}{q^{k+1} \p(A)^q} \le \frac{4}{q^{k+1}},
\end{displaymath}

The lemma follows now by Fubini's theorem and the union bound.
\end{proof}

\subsection{Proof of the main results\label{sec:proofs-main-results}}

Let us start with the proof of the necessity of the condition \eqref{eq:nsc}.

\begin{proof}[Proof of the first part of Theorem \ref{thm:characterization}]
Assume that $\Psi$ is an Orlicz function satisfying \eqref{eq:HJTal} for all sequences of i.i.d., real valued, symmetric, bounded random variables $X_1,\ldots,X_N$.

Let $u$ be such that $\Psi(u) > 1$. For $N \in \mathbb N_+$, consider a sequence $(X_j)_{j \le N}$ of i.i.d. random variables such that $\p  (X_j= \pm u) = (2N\Psi(u))^{-1}$, $\p  (X_j=0) = 1 - (N\Psi(u))^{-1}$. Clearly, $\max_{j \le N}|X_j| \le u$ and

\begin{displaymath}
\p  ( \max_{j \le N}|X_j| \neq 0) = \p\Big( \bigcup_{j=1}^N \{ |X_j| \neq 0\}\Big) \le N(1 - \p  (|X_1|=0)) = (\Psi(u))^{-1}.
\end{displaymath}

In particular,

\begin{displaymath}
\E \Psi\Big( \frac{\max_{j \le N}|X_j|}{\lambda}\Big) \le \Psi\Big(\frac{u}{\lambda}\Big) \frac{1}{\Psi(u)}\quad \text{for any}\ \lambda > 0,
\end{displaymath}
hence $\|\max_{j \le N}|X_j|\|_{\Psi} \le 1$. Moreover, by the triangle inequality, $\|\sum_{j=1}^N X_j\|_{1} \le \frac{u}{\Psi(u)}$. Since $\Psi(u) \ge cu$ for some $c>0$ and all $u$ large enough, we have $\frac{u}{\Psi(u)} \le \frac{1}{c}$. It follows from \eqref{eq:HJTal} that $\|\sum_{j=1}^N X_j\|_{\Psi} \le (1+\frac{1}{c})D$ if only $u$ is large enough.

Hence, there is some constant $C=C_\Psi > 1$ such that $\|\sum_{j=1}^N X_j \|_{\Psi} \le C$ for $u$ large enough. Now, observe that
\begin{displaymath}
1 \ge \E \Psi \Big( \frac{1}{C}\Big|\sum_{j=1}^NX_j\Big|\Big) \ge \Psi\Big(\frac{N u}{C}\Big)\p  \Big( \Big|\sum_{j=1}^N X_j\Big| = Nu\Big) \ge \Psi\Big(\frac{N u}{C}\Big) (2N\Psi(u))^{-N}.
\end{displaymath}
 It follows that for $u$ large enough and all $N \in \mathbb{N}_+$, we get $\Psi( \frac{Nu}{C}) \le (2N\Psi(u))^N$. Now, recall $\Psi(u) = e^{\psi(u)} - 1$, so that for some constant $a \in (0,1)$ and all $u$ large enough, $e^{a\psi(u)} \le \Psi(u)$. As a consequence,
 \begin{displaymath}
 a \psi\Big(\frac{Nu}{C}\Big) \le N\ln(2N) + N\psi(u).
\end{displaymath}
For $t$ of the form $t=\frac{N}{C}$, where $N \in \mathbb{N}_+$, and all $u$ large enough we obtain
\begin{displaymath}
\psi(tu) \le \frac{C}{a}t\ln(1+2tC) + \frac{C}{a}t\psi(u) \le \frac{C(1+\ln(1+2C))}{a}\Big( t\ln(1+t) + t\psi(u)\Big).
\end{displaymath}
Given any $u,s$ large enough, we can find $t = \frac{N}{C}$ such that $s\le t<s+1$ (recall that $C > 1$). Hence, $$ \psi(su) \le \tilde{C}\Big(2s\ln(1+2s) + 2s\psi(u)\Big) \le 2\tilde{C}(1+\ln(3))\Big(s\ln(1+s) + s\psi(u)\Big)$$ for some constant $\tilde{C}$ depending only on $\Psi$. It suffices to set $K=2\tilde{C}(1+\ln(3))$.
\end{proof}

We now proceed with the much more involved proof of sufficiency of the condition \eqref{eq:nsc}.

\begin{proof}[Proof of the second part of Theorem \ref{thm:characterization}]

In this proof by $C,c$ we shall denote positive constants depending only on $\Psi$ which may vary in value between occurrences, even within the same line.

Assume that $\Psi$ satisfies the condition \eqref{eq:nsc}. Denote
\begin{displaymath}
Z = \Big \|\sum_{j=1}^N X_j\Big\|.
\end{displaymath}

We will start by a standard reduction to the case of symmetric random variables, which we will present for the sake of completeness.
First, by the triangle inequality and Lemma \ref{le:mean-Orlicz},
\begin{displaymath}
\| Z\|_{\Psi} \le \Big \| \sum_{i=1}^N (X_i- \E X_i)\Big\|_{\Psi} + C\|Z\|_1.
\end{displaymath}
If we assume that \eqref{eq:HJTal} is satisfied for symmetric random variables then, using Lemma \ref{le:symmetrization}, we further get
\begin{multline*}
\|Z\|_{\Psi} \le 2 \Big\|\sum_{i=1}^N \varepsilon_i(X_i-\E X_i)\Big\|_{\Psi} + C\|Z\|_1 \\
\le 2D\Big(\Big\|\sum_{i=1}^N \varepsilon_i(X_i-\E X_i)\Big\|_1 + \Big\|\max_{i\le N} \|X_i-\E X_i\| \Big\|_{\Psi}\Big) + C\|Z\|_1\\
\le  2D\Big(2 \Big\|\sum_{i=1}^N (X_i-\E X_i)\Big\|_1 + \Big\|\max_{i\le N} \|X_i-\E X_i\| \Big\|_{\Psi}\Big) + C\|Z\|_1\\
\le  (8D+C)\|Z\|_1 + 4D\Big\|\max_{i\le N} \|X_i\| \Big\|_{\Psi},
\end{multline*}
where in the last estimate we again used the triangle inequality and $\|\E X_i\|_{\Psi} \le \|X_i\|_{\Psi}$, which follows from Jensen's inequality.
In the remaining part of the proof we will thus assume that $X_1,\ldots,X_N$ are symmetric.
By homogenity we can also assume that
\begin{align}\label{eq:normalization-assumption}
\|Z\|_{1} + \Big\|\max_{j \le N}\|X_j\| \Big\|_{\Psi} = 1.
\end{align}

Hence, the theorem will be proven if we show that
\begin{align}\label{eq:goal}
\E \Psi\Big(\frac{Z}{\rho}\Big) \le 1
\end{align}
for some constant $\rho=\rho_\Psi>0$. Consider a large constant $a$, the value of which will be set later on ($a$ will depend on $\Psi$ only). Our goal is to bound
\begin{displaymath}
  \E \Psi\Big(\frac{Z}{3a}\Big) = \int_0^\infty \p\Big( \Psi\Big(\frac{Z}{3a}\Big) \ge t\Big)dt
\end{displaymath}
by a finite constant, depending only $\Psi$.

We have
\begin{displaymath}
\p\Big( \Psi\Big(\frac{Z}{3a}\Big) \ge t\Big) = \p\Big( Z \ge 3a\Psi^{-1}(t)\Big) = \p\Big( Z \ge 3a\psi^{-1}(\ln(1+t))\Big).
\end{displaymath}

To estimate the above expression we will use Lemma \ref{le:crucial-lemma} with an appropriate choice of parameters, depending on $t$. We will work under the assumption that $t > t_\Psi$ for a suitable choice of $t_\Psi$, depending only on $\Psi$. The requirements on $t_\Psi$ will become clear in the course of the proof.

In order to make the argument more transparent let us adopt another convention regarding constants and parameters in the rest of the proof and provide a short glimpse into its structure. We will encounter three parameters, depending only on $\Psi$: the parameter $a$ we are after and auxiliary parameters $R, p_0$, which we will use to show the existence of $a$ (their role will be explained later on). We will still use $C$, $c$ for respectively large and small positive constants depending only on $\Psi$. As already mentioned, the values of $C,c$ may change between occurrences. The additional convention we introduce now is that the constants $C,c$ will not depend on the parameters $a,R,p_0$. The inequalities involving those constants will hold for $t > t_\Psi$ and $t_\Psi$ may depend on $a,R,p_0$, it is only important that $C,c$ do not. We are going to first define $R$ (in \eqref{eq:assumption-on-R}), then $p_0$ (in \eqref{eq:assumptions-on-p-naught}), and finally $a$ (in \eqref{eq:assumptions-on-a}). In particular $p_0$ and $a$ will be defined with use of the constants $C,c$.

Let
\begin{align}\label{eq:q-k-definition}
q(t) = \Big\lfloor \Big(\frac{(\psi^{-1}(\ln(1+t)))}{\ln \ln(1+t)}\Big)^{\frac{1}{3}} \Big\rfloor \textrm{ and } k(t) = 4 \Big\lceil\frac{\ln(1+t)}{\ln q(t)}\Big \rceil.
\end{align}
Note that by Lemma \ref{le:phi-inverse}, for $s$ large enough,
\begin{align}\label{eq:bound-on-phi-inverse}
\psi^{-1}(s) \ge c \frac{s}{\ln s}
\end{align}
and as a consequence, for $t> t_\Psi$,
\begin{align}\label{eq:lower-bound-on-q}
q(t) \ge c \frac{(\ln t)^{1/3}}{\ln \ln t} \ge 4
\end{align}
and
\begin{align}\label{eq:log-of-q}
\ln q(t) \ge c \ln \psi^{-1}(\ln(1+t)).
\end{align}

The inequality $q^2 \le a\psi^{-1}(\ln(1+t))$, which holds for $t > t_\Psi$, together with \eqref{eq:normalization-assumption} imply that $q^2 \E Z \le a\psi^{-1}(\ln(1+t))$, and so, by Lemma \ref{le:crucial-lemma} applied with $u = u' = a\psi^{-1}(\ln(1+t))$, one gets for $t > t_\Psi$,
\begin{multline}\label{eq:q-points-inequality}
\p  \Big( \Psi\Big(\frac{Z}{3a}\Big) \ge t\Big) = \p   \Big( Z \ge 3a\Psi^{-1}(t)\Big) = \p\Big( Z \ge 3a\psi^{-1}(\ln(1+t)) \Big)\\
\le \exp\Big(-\frac{a^2}{16q(t)^3} (\psi^{-1}(\ln(1+t)) )^2\Big)  + e^{-k(t)\ln(q(t))} + \p\Big( \sum_{r \le k(t)} Y_r > a\psi^{-1}(\ln(1+t)) \Big)\\
\le \exp(-c a^2\ln(1+t)) + \exp(-4 \ln(1+t)) + \p\Big( \sum_{r \le k(t)} Y_r > a\psi^{-1}(\ln(1+t)) \Big),
\end{multline}
where we used that by the definition of $q(t)$ and \eqref{eq:bound-on-phi-inverse},
\begin{displaymath}
  \frac{(\psi^{-1}(\ln(1+t)) )^2}{q(t)^{3}} \ge \psi^{-1}(\ln(1+t))\ln \ln (1+t) \ge c \ln(1+t).
\end{displaymath}

The first and second terms on the right-hand side of \eqref{eq:q-points-inequality} are integrable, provided $a$ is large enough. It thus remains to bound the last term, i.e., $\p  ( \sum_{r \le k(t)}Y_r > a\psi^{-1}(\ln(1+t)))$ by an integrable function.

Let $m$ be the largest integer such that $2^m \le k(t)$, so that $m \le C \ln k(t)$. Let $p\le m$ be the largest nonnegative integer such that $Y_{2^p} \ge R$, where $R=R_\Psi$ is a positive constant to be determined later. If such an integer does not exist, we set $p=-1$. Note that $p$ is a random variable.

If $a$ is large enough with respect to $R$, then
\begin{align}\label{eq:1st-condition-on-a}
 Y_{2^{p+1}}+...+Y_{k(t)} \le R k(t) \le CR \frac{\ln t}{\ln \ln t} \le \frac{a}{2} \psi^{-1}(\ln(t+1)),
\end{align}
where in the second inequality we used \eqref{eq:q-k-definition} and \eqref{eq:lower-bound-on-q}, while in the last one the inequality \eqref{eq:bound-on-phi-inverse}.

It follows that on the event
\begin{align}\label{eq:definition-of-A-t}
A_t :=\Big\{ \sum_{r \le k(t)} Y_r \ge a \psi^{-1}(\ln(t+1))\Big\}
\end{align}
we have $p \ge 0$ and
\begin{displaymath}
Y_1 + ... +Y_{2^{p+1}} \ge \frac{a}{2}\psi^{-1}(\ln(t+1)).
\end{displaymath}

Let us now fix a constant $p_0 \in \mathbb N_+$ also to be determined later and set
\begin{displaymath}
B_t := \Big\{\sum_{r \le 2^{p_0}}Y_r \ge \frac{a}{4}\psi^{-1}(\ln(1+t))\Big\}.
\end{displaymath}
Note that
\begin{displaymath}
  B_t \subset \Big\{ \max_{j \le N}\|X_j\| \ge \frac{a}{4 \cdot 2^{p_0}}\psi^{-1}(\ln(1+t))\Big\} = \Big\{ \Psi\Big( \frac{4\cdot 2^{p_0}}{a} \max_{j \le N}\|X_j\|\Big) \ge t \Big\}.
\end{displaymath}
Hence, by \eqref{eq:normalization-assumption}, for $a > 4\cdot 2^{p_0}$,
\begin{align}\label{eq:bound-on-B_t}
\int_0^\infty \p  (B_t)dt \le 1.
\end{align}

We shall now estimate $\p  (A_t \cap B_t^c)$. On this set, we have
\begin{align}\label{eq:integral-with-set-B}
\sum_{i=2^{p_0}+1}^{2^{p+1}}Y_{i} \ge \frac{a}{4}\psi^{-1}(\ln(1+t)),
\end{align}
so due to monotonicity of $(Y_r)_r$,
\begin{align}\label{eq:powers-of-two}
\sum_{i=p_0}^{p}2^iY_{2^i} \ge \sum_{i=p_0}^{p} \sum_{r=2^i+1}^{2^{i+1}}Y_r = \sum_{i=2^{p_0}+1}^{2^{p+1}}Y_i \ge \frac{a}{4}\psi^{-1}(\ln(1+t)).
\end{align}

Note that this implies in particular that on $A_t\cap B_t^c$ the above sums are not empty (which can in principle happen outside this event).

It follows that there exist positive integers $m(p_0),...,m(p)$ such that setting $a_i = 2^{m(i)}$ for $i\in\{p_0,...,p\}$ we have
\begin{align}\label{eq:restrictions}
\begin{cases} \sum_{i=p_0}^p a_i 2^i \ge \frac{a}{8}\psi^{-1}(\ln(1+t)), \\
a_i \ge \frac{R}{2},\ \\
Y_{2^i} \ge a_i, \\
m(i) \le \log_2 (\frac{a}{8}\psi^{-1}(\ln(1+t))) \le C\ln(a \psi^{-1}(\ln(1+t))).\end{cases}
\end{align}

Indeed, one can take $m(i)$ such that $2^{m(i)} \le \min(Y_{2^i},\frac{a}{8}\psi^{-1}(\ln(1+t))) < 2^{m(i) + 1}$. The first inequality of \eqref{eq:restrictions} follows then from \eqref{eq:powers-of-two}. By the definitions of $p$ and $m(i)$ the second inequality holds for $t > t_\Psi$, provided that $t_\Psi$ is sufficiently large with respect to $R$ (recall that we are going to choose $R$ depending only on $\Psi$, in particular independent of $t$). The remaining inequalities follow directly from the definition of $m(i)$.

Thus, by the union bound,
\begin{align}\label{eq:union-bound}
\p(A_t\cap B_t^c) \le \sum \p(\forall_{p_0 \le i \le p}\, Y_{2^i} \ge a_i),
\end{align}
where the summation is over all choices of numbers $p,a_{p_0},...,a_p$, satisfying \eqref{eq:restrictions} (note that we slightly abuse the notation and treat now $p \in \{p_0,\ldots,m\}$ as a parameter, rather than a random variable).

The number of such choices is at most
\begin{displaymath}
m \cdot \Big( C\ln(a\psi^{-1}(\ln(1+t))\Big)^m = \exp\Big( \ln m + m \ln \bigr(C \ln \bigr(a \psi^{-1}\bigr(\ln(1+t)\bigr)\bigr)\bigr)\Big).
\end{displaymath}
Recall that by the convexity of $\Psi$, we have $\psi(t) \ge \ln(1+ct)$ for some $c>0$ and $t$ large enough. Letting $t = \psi^{-1}(s)$ we get $s \ge \ln(1+c\psi^{-1}(s))$ and, in particular, $\frac{1}{c}(e^s-1) \ge \psi^{-1}(s)$. As a consequence,
\begin{displaymath}C \ln(a \psi^{-1}(\ln(1+t))) \le C \ln\Big(\frac{a}{c}t\Big) \le C\ln(\frac{a}{c}) + C\ln t,
\end{displaymath}
hence for $t \ge \frac{a}{c}$, the number of choices of $p,a_{p_0},...,a_p$, satisfying \eqref{eq:restrictions}, is at most
\begin{align}\label{eq:number-of-choices}
\exp\Big( \ln m + mC \ln \ln (t+1)\Big) \le \exp \Big( C (\ln \ln (t+1))^2\Big),
\end{align}
where in the last inequality we used the estimate $m \le \log_2 k(t) \le C\ln\ln(t+1)$ which follows from the definition of $m$ and \eqref{eq:q-k-definition}.

Let us now estimate the probability that for fixed $p_0 \le p \le m$ and $m(p_0),...,m(p)$ as above, we have $Y_{2^i} \ge a_i$ for every $i \in \{p_0,...,p\}$. We may assume that $p_0 \ge 2$. Using Lemma \ref{le:Orlicz-Chebyshev} we may now choose $R=R_\Psi$ large enough so that $R \ge 2$ and
\begin{align}\label{eq:assumption-on-R}
\p  ( \max_{j \le N}\|X_j\| \ge R/2)\le 2e^{-\psi(R/2)} \le \frac{1}{16}.
\end{align}
Then
\begin{multline*}
\p  (\forall_{p_0 \le i \le p}:\  Y_{2^i} \ge a_i) \le \sum_{\substack{I_{p_0},...,I_p \subset \{1,...,N\} \\ (I_{i})_{p_0 \le i \le p}\  \text{pairwise disjoint} \\ |I_i| = 2^{i-1},\  p_0 \le i \le p}} \prod_{i=p_0}^p \prod_{j \in I_i}\p  (\|X_j\| \ge a_i) \\
\le \sum_{\substack{I_{p_0},...,I_p \subset \{1,...,N\} \\ |I_i| = 2^{i-1},\  p_0 \le i \le p}} \prod_{i=p_0}^p \prod_{j \in I_i}\p  (\|X_j\| \ge a_i) \le \prod_{i=p_0}^p \Big( \sum_{j \le N}\p  (\|X_j\| \ge a_i)\Big)^{2^{i-1}}.
\end{multline*}

Invoking Lemma 3.2 and \eqref{eq:assumption-on-R} together with the trivial inequality $x \le \frac{1}{4}\sqrt{x}$ which holds for every $x \in [0,1/16]$, we may bound the right-hand side of the above inequality by
\begin{displaymath} \prod_{i=p_0}^p \Big(2 \p  (\max_{j \le N}\|X_j\| \ge a_i) \Big)^{2^{i-1}} \le \prod_{i=p_0}^p \Big( 2 \cdot \frac{1}{4} \p  ^{\frac{1}{2}}(\max_{j \le N}\|X_j\| \ge a_i)\Big)^{2^{i-1}}.
\end{displaymath}

By \eqref{eq:normalization-assumption} and Lemma \ref{le:Orlicz-Chebyshev}, $\p  (\max_{j \le N}\|X_j\| \ge t) \le 2e^{-\psi(t)}$, hence if $t \ge t_a$, then
\begin{multline}\label{eq:bound-on-intersection}\p  (\forall_{p_0 \le i \le p}\, Y_{2^i} \ge a_i) \le \prod_{i=p_0}^p \Big( e^{-\frac{1}{2}\psi(a_i)}\Big)^{2^{i-1}} = \exp\Big( -\sum_{i=p_0}^p 2^{i-2}\psi(a_i)\Big)\\
= \exp\Big( - 2^{p_0-2} \sum_{i=p_0}^p 2^{i-p_0} \frac{a_i}{\sum_{j=p_0}^p 2^{j-p_0}a_j} \frac{\sum_{j=p_0}^p 2^{j-p_0}a_j}{a_i} \psi(a_i) \Big) =: (*).
\end{multline}

Note that $ \frac{1}{a_i}\sum_{j=p_0}^p 2^{j-p_0}a_j \ge 1$ and, moreover, $\frac{1}{a_i}\sum_{j=p_0}^p 2^{j}a_j \le \max_j\{a_j\} 2^{p+1} = 2^{p+1} 2^{\max\{m(j)\}}$. Recall that
\begin{displaymath}
m(j) \le C\ln(a \psi^{-1}(\ln(1+t))) \le 2C\ln(\psi^{-1}(\ln(1+t)))
\end{displaymath}
if $t$ is large enough, say $t \ge t_a$, where $t_a$ depends only on $a$.
Thus for $t \ge t_a$ we have $2^{p+1} 2^{\max\{m(j)\}} \le 2k(t) (\psi^{-1}(\ln(1+t)))^C$, and
\begin{align}\label{eq:auxiliary-estimate}
\frac{1}{a_i}\sum_{j=p_0}^p 2^{j-p_0}a_j \le \frac{1}{2^{p_0}}k(t)( \psi^{-1}(\ln(1+t)))^C \le (\psi^{-1}(\ln(1+t)))^{C+2},
\end{align}
where the last inequality is a crude estimate based on \eqref{eq:q-k-definition}, \eqref{eq:bound-on-phi-inverse} and \eqref{eq:lower-bound-on-q}.

Note that by adjusting the constant $K$ in the condition \eqref{eq:nsc} we may assume that it holds for any $s,u \ge 1$ (for simplicity we will call the adjusted constant also by $K$). Using this inequality with $s = \frac{1}{a_i}\sum_{j=p_0}^p 2^{j-p_0}a_j \ge 1$ and $u = a_i \ge R/2 \ge 1$, we obtain
\begin{multline*} (*) \le \exp\Big(-\frac{2^{p_0-2}}{K} \Big( \psi\Big(\sum_{j=p_0}^p 2^{j-p_0}a_j\Big) - K \sum_{i=p_0}^p 2^{i-p_0}\ln\Big(1+ \frac{1}{a_i}\sum_{j=p_0}^p 2^{j-p_0}a_j\Big)\Big)\Big)\\
\le \exp\Big( - \frac{2^{p_0-2}}{K}\Big( \psi\Big(\frac{a}{8\cdot 2^{p_0}}\psi^{-1}(\ln(1+t))\Big) - \frac{K}{2^{p_0-1}}k(t) \ln(1 + (\psi^{-1}(\ln(1+t)))^{C+2})\Big) \Big) \\
\le \exp\Big( - \frac{2^{p_0-2}}{K}\Big( \psi\Big(\frac{a}{8 \cdot 2^{p_0}}\psi^{-1}(\ln(1+t))\Big) - \frac{K(C+2)}{2^{p_0-1}}\frac{\ln(1+t)}{c\ln(\psi^{-1}(\ln(1+t)))}\ln(\psi^{-1}(\ln(1+t))) \Big) \Big),
\end{multline*}
where in the second inequality we also used $2^p \le 2^m \le k(t)$, the inequality \eqref{eq:auxiliary-estimate}, and the first condition in \eqref{eq:restrictions}, while in the third inequality the definition \eqref{eq:q-k-definition} and the estimate \eqref{eq:log-of-q}.

Going back to \eqref{eq:bound-on-intersection} we obtain that, if $p_0$ is large enough so that
\begin{align}\label{eq:assumptions-on-p-naught}
\frac{K(C+2)}{c2^{p_0-1}} < \frac{1}{2} \textrm{ and } \frac{2^{p_0-2}}{K} \ge 8,
\end{align}
then for all $a \ge 8 \cdot 2^{p_0}$ and $t \ge t_a$,
\begin{displaymath}
\p(\forall_{p_0 \le i \le p}:\ Y_{2^i} \ge a_i)\le \exp\Big(-8 \Big( \psi(\psi^{-1}(\ln(1+t))) - \frac{1}{2}\ln(1+t)\Big)\Big) = \exp\Big(-4\ln(1+t)\Big).
\end{displaymath}
Note that the choice of $p_0$ and $a$ can be made in such a way that they depend only on $\Psi$, as a consequence $t_a$ also depends only on $\Psi$.

Taking into account the bound \eqref{eq:number-of-choices} on the number of choices of $p,a_{p_0},...,a_{p}$ and \eqref{eq:union-bound}, we get
\begin{displaymath}
\p(A_t \cap B_t^c) \le \exp(C(\ln \ln (1+t))^2 - 4\ln(1+t)) \le \exp(-3\ln(1+t)) = \frac{1}{(1+t)^3}
\end{displaymath}
for $t\ge t_\Psi$. This means that for some constant $\kappa_\Psi$, depending only on $\Psi$,
\begin{displaymath}\int_0^\infty\p  (A_t)dt \le \int_0^\infty \p  (A_t \cap B_t^c) dt + \int_0^\infty \p  (B_t)dt \le  \kappa_\Psi
\end{displaymath}
where we used \eqref{eq:bound-on-B_t}.
Combining this with the definition \eqref{eq:definition-of-A-t} of the event $A_t$ and \eqref{eq:q-points-inequality} we obtain that
for
\begin{align}\label{eq:assumptions-on-a}
a \ge \max(8\cdot 2^{p_0},CR),
\end{align}
we have
\begin{displaymath}
\E \Psi\Big(\frac{Z}{3a}\Big)  = \int_0^\infty \p  \Big( \Psi\Big(\frac{Z}{3a}\Big) \ge t\Big)dt  < \tilde{\kappa}_{\Psi},
\end{displaymath}
where $\tilde{\kappa}_\Psi$ is another constant, depending only on $\Psi$.
Recall that the condition $a \ge 8\cdot2^{p_0}$ was used in order to get integrability of the function $t\mapsto \p(A_t)$, in turn the condition $a \ge CR$ was used in \eqref{eq:1st-condition-on-a}. The inequality $R \ge 2$ ensures also (for $C$ sufficiently large) the integrability of the first summand on the right-hand side of \eqref{eq:q-points-inequality}.

By convexity of $\Psi$ the above estimate implies \eqref{eq:goal} for some $\rho$, depending only on $a$ and $C$ and thus only on $\Psi$ (one may take $\rho = 3a\max(\tilde{\kappa}_\Psi,1)$). This ends the proof of the theorem.
\end{proof}

Let us now pass to the proof of Proposition \ref{prop:counterexample}.

\begin{proof}[Proof of Proposition \ref{prop:counterexample}]

For a convex function $F$ by $F'(x+)$ and $F'(x-)$ we will denote respectively its right and left derivatives at $x$.

Note that by the convexity of $\Phi$, $\Phi'(x+) \ge \frac{\Phi(x)-\Phi(0)}{x-0} = \frac{\Phi(x)}{x}$, hence we also have
\begin{align}\label{eq:superpolynomial}
\liminf_{x \to +\infty} \frac{\Phi'(x+)}{x^p} = \infty
\end{align}
for all $p>0$.
By Remark \eqref{re:nsc1} the condition \eqref{eq:nsc} is equivalent to the inequality
\begin{align}\label{eq:HJ-in-terms-of-Psi}
\Psi(tu) \le K' t^{K't} \Psi(u)^{K't}
\end{align}
for some constant $K'>0$ and all $t,u \ge K'$.

To prove the proposition it is therefore enough to construct an Orlicz function $\Psi$, dominated by $\Phi$ far away from zero, together with a sequence of numbers $u_1,u_2,\ldots$ such that $u_n\to \infty$ and
\begin{align}\label{eq:to-fulfil}
\Psi(nu_n) > n n^{n^2} \Psi(u_n)^{n^2}
\end{align}
for large $n$.

We shall define the sequence $u_n$, inductively, in such a way that $u_{n+1} \ge nu_n$. Along the construction we will also define a function $\Psi$, affine on $[u_n,u_{n+1}]$ and satisfying \eqref{eq:to-fulfil}.

Set first $u_1 = 0$ and let $u_2$ be any number greater than 1, such that
\begin{displaymath}
  \Phi(u_2) > 1, \; \Phi'(u_2+) > \frac{1}{u_2}, \; \Phi'(u_2+)u_2 + 1 > 2 \cdot 2^{2^2}.
\end{displaymath}

Define then $\Psi(x) = \frac{x}{u_2}$ on $[u_1,u_2]$. In terms of $\Psi$ the above inequalities translate into
\begin{displaymath}
  \Phi(u_2)> \Psi(u_2), \; \Phi'(u_2+) > \Psi'(u_2-),\; \Phi'(u_2+)(2u_2 - u_2) + 1 > 2 \cdot 2^{2^2} \Psi(u_2)^{2^2}.
\end{displaymath}

Assume now that for some $n\ge 2$, we have already defined a sequence $0=u_1 < 1 < u_2< \ldots<u_n$, and a convex increasing function $\Psi\colon[u_1,u_n] \to [0,\infty)$, such that $\Psi \le \Phi$ on $[u_2,u_n]$ and
\begin{itemize}
\item[(i)] $\Psi$ is affine on $[u_{k-1},u_{k}]$ for $k=2,\ldots,n$;
\item[(ii)] $ku_k < u_{k+1}$ for $k=2,\ldots,n-1$;
\item[(iii)] $\Psi(ku_{k}) > k\cdot k^{k^2}\Psi(u_k)^{k^2}$ for $k=2,\ldots,n-1$;
\item[(iv)] $\Phi'(u_k+) > \Psi'(u_k-)$ for $k=2,\ldots,n$;
\item[(v)] $\Phi'(u_n+)(nu_n - u_n) + \Psi(u_n) > n\cdot n^{n^2} \Psi(u_n)^{n^2}$.
\end{itemize}
Note that for $n=2$ some of the conditions above are empty.

We can then find a number $u_{n+1} > nu_n$ such that if for every $x \in (u_n,u_{n+1}]$ we define $\Psi(x) = \Phi'(u_n+)(x-u_n) + \Psi(u_n)$, then $\Psi$ is convex increasing on $[u_1,u_{n+1}]$, $\Psi \le \Phi$ on $[u_2,u_{n+1}]$ and moreover all the above conditions hold for $n+1$ instead of $n$. It is enough to find $u_{n+1}$ such that $u_{n+1} > nu_n$ and
\begin{displaymath}
\Phi'(u_{n+1}+) > (n+1)\cdot (n+1)^{(n+1)^2} \Big(\Phi'(u_n+)(u_{n+1}-u_n) + \Psi(u_n)\Big)^{(n+1)^2} + \Phi'(u_n+).
\end{displaymath}
which is possible by \eqref{eq:superpolynomial}, since the right hand side above is a polynomial function of $u_{n+1}$.
Indeed, the above inequality together with the condition $u_{n+1} > 1$,  implies the inequality (v) with $n$ replaced by $n+1$.

Together with the induction assumption and the definition of $\Psi$ on $(u_n,u_{n+1}]$ (v) for $n+1$ implies (iii) with $n+1$ instead of $n$. Since $\Psi'(u_{n+1}-) = \Phi'(u_n+) < \Phi'(u_{n+1}+)$, we also get (iv) with $n+1$ instead of $n$. The conditions (i) and (ii) are trivial by the construction.

Now, by the induction assumption (iv), we have $\Psi'(x+) \ge \Psi(u_n-)$ for $x \in [u_n,u_{n+1})$, which together with convexity and monotonicity of $\Psi$ on $[u_1,u_n]$ gives convexity and monotonicity on $[u_1,u_{n+1}]$. Moreover, the convexity of $\Phi$, together with $\Psi(u_n) \le \Phi(u_n)$ implies $\Psi \le \Phi$ on $[u_n,u_{n+1}]$ and thus, by the induction assumption, on $[u_2,u_{n+1}]$.

Thus, the inductive construction produces an Orlicz function $\Psi$ such that $\Psi(x) \le \Phi(x)$, together with a sequence $u_n\to \infty$ such that \eqref{eq:to-fulfil} holds, which ends the proof.
\end{proof}

\subsection{Proof of results from Section \ref{sec:weak-variance}\label{sec:proofs-weak-variance}}

Similarly as in \cite{MR2424985} in the $\Psi_\alpha$ case, we will combine the inequality by Talagrand for bounded empirical processes with a truncation argument based on the Hoffmann-Jørgensen inequality.

\begin{proof}[Proof of Theorem \ref{thm:weak-variance}]
As noted in Remark \ref{re:Bennett-Bernstein}, it is enough to prove \eqref{eq:Bennett}.  We can assume that the class $\mathcal{A}$ is finite. In what follows by $C$ we will denote constants which depend only on $\Psi$. Their value may change between occurrences, even within the same line.

For $x \in \mathcal{S}$ let $F(x) = \sup_{f\in \mathcal{A}} |f(x)|$ and $R = 8 \E\max_{i\le N} F(X_i)$. Denote
\begin{displaymath}
  S_1 = \sup_{f\in \mathcal{A}} \sum_{i=1}^N f(X_i)\ind{F(X_i) \le R}, \; S_2 = \sup_{f\in \mathcal{A}} \Big| \sum_{i=1}^N f(X_i)\ind{F(X_i) > R}\Big|.
\end{displaymath}

We have
\begin{align}\label{eq:truncation}
|S - S_1| \le S_2,\; |\E S - \E S_1| \le \E |S - S_1| \le \E S_2.
\end{align}

Observe that by Markov's inequality and the definition of $R$,
\begin{displaymath}
  \p\Big(\max_{n\le N} \sup_{f\in \mathcal{A}} \Big|\sum_{i=1}^n f(X_i)\ind{F(X_i) > R}\Big| > 0\Big) \le \p(\max_{n \le N} F(X_n) > R) \le \frac{1}{8},
\end{displaymath}
so by the original Hoffmann-Jørgensen inequality \eqref{eq:HJ-quantile} with $p=1$ and $t_0 = 0$, together with Lemma \ref{le:embedding}, we have
\begin{displaymath}
\E S_2 \le C\E \max_{i\le n} F(X_i) \le C \|\max_{i\le N} F(X_i)\|_{\Psi}
\end{displaymath}
(formally, we apply \eqref{eq:HJ-quantile} to the finite-dimensional Banach space $\ell_\infty(\mathcal{A})$ of bounded functions on $\mathcal{A}$ and random vectors
$\widehat{X}_i = (f(X_i)\ind{F(X_i) > R})_{f\in \mathcal{A}}$).

By Theorem \ref{thm:characterization}, $\Psi$ satisfies \eqref{eq:HJTal}, so we also obtain
\begin{multline*}
  \|S_2 + \E S_2\|_\Psi
  \le  \|S_2\|_{\Psi} + \|\E S_2\|_{\Psi} \le D(\E S_2 + \|\max_{i\le N} F(X_i)\|_{\Psi}) + \|\E S_2\|_{\Psi}\\
  \le C \|\max_{i\le N} F(X_i)\|_{\Psi} = CU.
\end{multline*}
where we also used that $\| \E S_2\|_{\Psi} \le C\E S_2$ (see Lemma \ref{le:mean-Orlicz}).

Thus, by Lemma \ref{le:Orlicz-Chebyshev} we get
\begin{align}\label{eq:unbounded-part-tail}
\p(S_2 + \E S_2 \ge t) \le \frac{2}{\Psi\big(\frac{t}{CU}\big)+1}
\end{align}
for all $t > 0$.

Now, Theorem \ref{thm:Talagrand-inequality} applied to the class $\widehat{A} = \{f(\cdot)\ind{F(\cdot) \le R}\colon f\in \mathcal{A}\}$, implies that for a sufficiently large constant $C$ and all $t > 0$,
\begin{multline}\label{eq:bounded-part-tail}
\p(|S_1 - \E S_1| \ge t) \le C\exp\Big(-\frac{t}{C R}\log\Big(1+\frac{tR}{\E\sup_{f\in A} \sum_{i=1}^N f(X_i)^2\ind{F(X_i)\le R}}\Big)\Big)\\
\le C \exp\Big(-\frac{t}{C^2 U}\log\Big(1+\frac{C tU}{\Sigma^2}\Big)\Big)\\
\le C \exp\Big(-\frac{t}{C^2 U}\log\Big(1+\frac{tU}{\Sigma^2}\Big)\Big),
\end{multline}
where in the second inequality we used the fact that $R \le C U$ and that for $a > 0$ the function
\begin{displaymath}
  R \mapsto \frac{a}{R}\log\big(1+aR)
\end{displaymath}
is decreasing on $(0,\infty)$. Indeed, differentiating with respect to $R$ gives
\begin{displaymath}
  \frac{a}{R^2} \Big(\frac{aR}{aR+1} - \log(1+aR)\Big) < 0
\end{displaymath}
thanks to the well known inequality $\log(1+x) \ge \frac{x}{x+1}$.

Note now that due to \eqref{eq:truncation},
\begin{displaymath}
  |S - \E S| \le |S_1 - \E S_1| + |S - S_1| + |\E S - \E S_1| \le |S_1 - \E S_1| + S_2 + \E S_2
\end{displaymath}
and so by \eqref{eq:unbounded-part-tail} and \eqref{eq:bounded-part-tail} applied to $t/2$ instead of $t$ we get
\begin{multline*}
  \p(|S - \E S| \ge t) \le \p(|S_1 - \E S_1| \ge t/2) + \p(S_2 + \E S_2 \ge t/2)\\
  \le C \exp\Big(-\frac{t}{2C U}\log\Big(1+\frac{tU}{2\Sigma^2}\Big)\Big) + \frac{2}{\Psi\big(\frac{t}{2CU}\big)+1}.
\end{multline*}

To finish the proof it remains to adjust the constants.
\end{proof}

Let us now pass to the proof of Proposition \ref{prop:weak-variance-necessity}

\begin{proof}[Proofs of Proposition \ref{prop:weak-variance-necessity}] Both parts of the proposition will be proved using the same testing variables. Fix $u > \Psi^{-1}(1)$ and consider a triangular array $(Y^{N}_i)_{N\ge 1,1\le i\le N}$ of independent random variables, such that $\p(Y^N_i = 1) = 1 - \p(Y^N_i = 0) = 1/(\Psi(u)N)$. Denote also by $Z$ a Poisson random variable with parameter $1/\Psi(u)$. Below $C$ stands for constants depending only on $\Psi$, which may change between occurrences.

Let $X_{N,i} = Y_{N,i} - \E Y_{N,i}$. Then, by Poisson's theorem, $S_N = \sum_{i=1}^N X_{N,i}$ converges in the total variation distance to $Z - \E Z$. In particular,
\begin{displaymath}
  \lim_{N\to \infty} \p(S_N \ge s) = \p(Z \ge s + 1/\Psi(u)).
\end{displaymath}
Thus for $s,u$ such that $s > 1 > 1/\Psi(u)$ and $N$ large enough
\begin{displaymath}
  \p(S_N\ge s) \ge \p(Z \ge \lceil 2s\rceil ).
\end{displaymath}
For $s,u$ large enough the above inequality gives
\begin{align}\label{eq:Poisson-lower-bound}
  \p(S_N\ge s) \ge \frac{1}{\lceil 2s\rceil!  \Psi(u)^{\lceil 2s\rceil }}e^{-1/\Psi(u)} \ge e^{-Cs\ln s - Cs\ln \Psi(u) - 1/\Psi(u)} \ge e^{-C^2(s\ln (1+s) + s \psi(u))},
\end{align}
where $C$ is some large constant, depending only on $\Psi$.

To bound $\p(S_N\ge s)$ from above, let us calculate the parameters appearing in \eqref{eq:Bennett-real-valued} and \eqref{eq:Bernstei-real-valued}.
We have $N\E X_{N,1}^2 \le \frac{1}{\Psi(u)}$.

Moreover,
\begin{displaymath}
\E \Psi\Big( u \max_{i\le N} Y_{N,i}\Big) = \Psi(u)\p(\max_{i\le N} Y_{N,i} = 1) \le 1,
\end{displaymath}
so $\|\max_{i\le N} Y_{N,i} \|_{\Psi} \le 1/u$, which due to Lemma \ref{le:mean-Orlicz} implies that $\|\max_{i\le N} |X_{N,i}|\|_{\Psi} \le C/u$.

Thus, an application of \eqref{eq:Bennett-real-valued} gives
\begin{align*}
\p(S_N\ge s) \le C\exp(-  c su \ln(1 + Cs\Psi(u)/u) + C\exp(-\psi(csu)),
\end{align*}
while \eqref{eq:Bernstei-real-valued} under the assumption that $\Psi$ is dominated by the exponential function leads to
\begin{align*}
\p(S_N\ge s) \le
C\exp( - c s^2\Psi(u)) + C\exp(-\psi(csu))
\end{align*}
(see Remark \ref{re:Bennett-Bernstein}, in particular \eqref{eq:Bernstein-equivalent}).

Comparing now the right hand-sides of the above two inequalities with the right-hand side of \eqref{eq:Poisson-lower-bound} and adjusting the constants, we obtain that under the assumptions of the first part of the proposition, for $s,u$ large enough and some positive constant $C$,
\begin{align}\label{eq:first-comparison}
C\Big(s\ln(1+s) + s\psi(u)\Big) \ge \min\Big(su\ln(1+s\Psi(u)/u), \psi(csu)\Big),
\end{align}
whereas in the setting of the second part, we get
\begin{align}\label{eq:second-comparison}
C\Big(s\ln(1+s) + s\psi(u)\Big) \ge \min\Big(s^2\Psi(u), \psi(csu)\Big)
\end{align}

Note that for $s,u$ sufficiently large, both $su\ln(1+s\Psi(u)/u)$ and $s^2\Psi(u)$ are greater than $C\Big(s\ln(1+s) + s\psi(u)\Big)$, thus both \eqref{eq:first-comparison} and \eqref{eq:second-comparison} imply that
\begin{displaymath}
  \psi(csu) \le C\Big(s\ln(1+s) + s\psi(u)\Big).
\end{displaymath}
To get \eqref{eq:nsc} it is now enough to adjust the constants.
\end{proof}

\subsection{Proofs of results from Section \ref{sec:convex-functions}\label{sec:proofs-convex-functions}}
The argument leading to Proposition \ref{prop:convex} will be similar to the one used for Theorem \ref{thm:weak-variance}, i.e. it will rely on splitting the variables into the bounded and unbounded part and using the Hoffmann-Jørgensen inequality to handle the unbounded one. In a similar form for specific Orlicz functions it appeared in \cite{MR2391154,MR4073683,10.1007/978-3-031-26979-0_7}.

\begin{proof}[Proof of Proposition \ref{prop:convex}]

In the proof, by $c,C$ we shall denote constants depending only on $\Phi$, which may vary from line to line. We will use the notation $|\cdot|$ to denote both the absolute value and the standard Euclidean norm on $\R^N$.

Let
\begin{displaymath}
M = \E \max_{i \le N}|X_i| \le  C\|\max_{i \le N}|X_i|\|_{\Phi},
\end{displaymath}
where the inequality follows from Lemma \ref{le:embedding}.

Define for $i\le N$, $Y_i = X_i 1_{\{|X_i| < 8M\}}$ and $Z_i = X_i - Y_i$. Set also $Y = (Y_i)_{i\le N}$, $Z = (Z_i)_{i\le N}$.

By the triangle inequality,
\begin{align}\label{eq:triangle}
 \p  (|f(X) - \E f(X)| \ge 2t) \le \p  (|f(Y) - \E f(X)| \ge t) + \p  (|f(X) - f(Y)| \ge t).
\end{align}

We shall first estimate $\||f(X)-f(Y)|\|_{\Phi}$. This will help us bound the second term on the right-hand side above as well as compare $\E f(X)$ and $\E f(Y)$.

Since $f$ is 1-Lipschitz, we have $|f(Y) - f(X)| \le |Z|$. Let us define $S_n = \sum_{i=1}^n Z_i^2$. Observe that by Markov's inequality,
\begin{displaymath}
\p  (\max_{n \le N}|S_n| > 0) = \p  (\max_{i \le N}Z_i^2 > 0) = \p  ( \max_{i \le N}|X_i| > 8M) \le \frac{1}{8}.
\end{displaymath}
Hence, by \eqref{eq:HJ-quantile} with $p = 1$, $\E |Z|^2 = \E S_N \le 8 \E \max_{i \le N}Z_i^2 \le C \|\max_{i \le N}Z_i^2\|_{\Psi}$

By \eqref{eq:nsc} and Theorem \ref{thm:characterization}, we know that \eqref{eq:HJTal} holds. Thus,
\begin{displaymath}
\| |Z|^2\|_{\Psi} \le D \Big(  \E |Z|^2 + \| \max_{i \le N} Z_i^2\|_{\Psi}\Big) \le C
\| \max_{i\le N} Z_i^2\|_{\Psi} \le C\|\max_{i\le N} |X_i|^2\|_{\Psi} = C\|\max_{i\le N} |X_i|\|_{\Phi}^2
\end{displaymath}
and so
\begin{align}\label{eq:Phi-norm-of-Z}
\||f(X) - f(Y)|\|_{\Phi} \le \||Z|\|_{\Phi} = \||Z|^2\|_{\Psi}^{1/2} \le C\|\max_{i\le N} |X_i|\|_{\Phi}.
\end{align}

In particular, for $t > 0$,
\begin{align}\label{eq:tail-of-Z}
\p  (|f(X) - f(Y)| \ge t) = \frac{2}{\Phi\Big(c \frac{t}{\|\max_{i\le N}|X_i|\|_{\Phi}}\Big) + 1}.
\end{align}

We shall now bound the first term on the right-hand side of \eqref{eq:triangle}. Let us start by estimating
\begin{displaymath}
  |\E f(X) - \E f(Y)| \le \E |f(X) - f(Y)| \le C\E \max_{i\le N}|Z_i| \le C\E \max_{i\le N} |X_i|,
\end{displaymath}
where in the second inequality we again used \eqref{eq:HJ-quantile} but this time in the space $\ell_2^n$ for the random variables $X_ie_i$, where $e_1,\ldots,e_n$ is the standard basis.

Applying Theorem \ref{thm:convex-Ledoux}, we thus get
\begin{multline*}
\p(|f(Y) - \E f(X)| \ge C\E \max_{i\le N} |X_i| + s) \\
\le \p(|f(Y) - \E f(Y)|\ge s) \le 2\exp\Big(-c\frac{s^2}{(\E \max_{i\le N} |X_i|)^2}\Big).
\end{multline*}

For $t > 2C\E \max_{i\le N} |X_i|$, applying this with $s = t - C\E \max_{i\le N} |X_i| \ge t/2$ yields
\begin{displaymath}
  \p(|f(Y) - \E f(X)| \ge t) \le 2\exp\Big(-c\frac{t^2}{(\E \max_{i\le N} |X_i|)^2}\Big)
\end{displaymath}
and together with \eqref{eq:tail-of-Z} and \eqref{eq:triangle} gives \eqref{eq:convex-unbounded}. On the other hand  for $t<2C\E \max_{i\le N} |X_i|$ and sufficiently small $c_\Psi$, the right-hand side of \eqref{eq:convex-unbounded} exceeds 1, and therefore \eqref{eq:convex-unbounded} holds trivially.

This ends the proof of the proposition.
\end{proof}

\subsection{Proofs of results from Section \ref{sec:boundedness}\label{sec:proofs-boundedness}}

\begin{proof}[Proof of Theorem \ref{thm:boundedness}]
The implication $(iii) \implies (ii)$ is trivial. Moreover, since $L_{\Psi}$ embeds into $L_1$ continuously (see Lemma \ref{le:embedding}), if $(ii)$ holds, by the original Hoffmann-Jørgensen's results (Theorem \ref{thm:HJ} or \eqref{eq:HJ-quantile}), we obtain that $\|S_n\|$ is bounded in $L_1$. By the assumption \eqref{eq:nsc} and Theorem \ref{thm:characterization} we thus obtain
\begin{displaymath}
  \sup_n \|S_n\|_{\Psi} \le C\Big(\sup_{n} \|S_n\|_1 + \Big\|\sup_n \|X_n\|\Big\|_{\Psi}\Big) < \infty,
\end{displaymath}
i.e., $(ii)$ implies $(i)$. It remains  to prove that $(i)$ implies $(iii)$. To this end let us first assume that $X_i$'s are symmetric. For $N \le \infty$ denote $M_N = \sup_{n< N} \|S_n\|$. By L\'{e}vy's inequality, for $N < \infty$ and any $t>0$,
\begin{displaymath}
\p(M_N \ge t) \le 2 \p  (\|S_N\| \ge t).
\end{displaymath} In particular, for any $\lambda > \sup_n \|S_n\|_{\Psi}$,
\begin{multline*}
 \E \Psi\Big(\frac{M_N}{\lambda}\Big) = \int_0^\infty \p  (M_N \ge \lambda \Psi^{-1}(t))dt \\
 \le 2\int_0^\infty \p  (\|S_N\| \ge \lambda \Psi^{-1}(t))dt = 2 \E \Psi\Big(\frac{\|S_N\|}{\lambda}\Big) \le 2
\end{multline*}
and so, by Lebesgue's monotone convergence theorem, $\E \Psi(M_\infty/\lambda) \le 2$, which by convexity gives $\E \Psi(M_\infty/(2\lambda)) \le 1$, i.e.,
\begin{displaymath}
\|M_\infty\|_\Psi \le 2\sup_n \|S_n\|_{\Psi} < \infty.
\end{displaymath}.

So far our proof followed very closely the original argument by Hoffmann-Jørgensen, with \eqref{eq:HJ-quantile} in $L_p$ replaced by \eqref{eq:HJTal} and \eqref{eq:HJ-quantile} for $p = 1$. In the remaining part we prefer to provide a more quantitative argument, which does not require additional tools other than those already introduced, and whose simple modification justifies our claim made below the equation \eqref{eq:HJ-quantile} about the (up to constant) formal equivalence of this inequality with seemingly weaker \eqref{eq:HJ-mean}.

Let us now consider the general case of not necessarily symmetric variables $X_n$. Let $(\varepsilon_n)_{n\ge 1}$ be a Rademacher sequence independent of  $(X_n)_{n\ge 1}$ and denote $\widetilde{S}_n = \sum_{i=1}^n \varepsilon_i (X_i - \E X_i)$. If $(S_n)_{n\ge 1}$ is bounded in $L_\Psi$, then by Lemmas \ref{le:symmetrization}, \ref{le:embedding} and \ref{le:mean-Orlicz}, so is $(\widetilde{S}_n)_{n\ge 1}$,
and since $\varepsilon_n X_n$'s are symmetric, we obtain
\begin{displaymath}
  \sup_{n\ge 1} \|\widetilde{S}_n\| \in L_{\Psi}.
\end{displaymath}
For a positive integer $N$, consider now the space $\ell_\infty^N(F) = F^N$, equipped with the maximum norm $\|x\|_\infty = \max_{i\le N} \|x_i\|$, together with independent random variables  $Z_i = (Z_{i1},\ldots,Z_{iN})$, where $Z_{ij} = 0$ for $j < i$ and $Z_{ij} = X_i - \E X_i$ for $j \ge i$. Note that
\begin{displaymath}
\Big\| \sum_{i=1}^N \varepsilon_i Z_i\Big\|_{\infty} = \sup_{n\le N} \|\widetilde{S}_n\|,\; \Big\|\sum_{i=1}^N Z_i\Big\|_{\infty} = \sup_{n\le N} \|S_n - \E S_n\|.
\end{displaymath}
Therefore, another application of Lemma \ref{le:symmetrization}, this time in $\ell_\infty^N(F)$ gives
\begin{displaymath}
  \Big\| \sup_{n\le N} \|S_n - \E S_n\|\Big\|_\Psi \le 2 \Big\|\sup_{n\le N} \|\widetilde{S}_n\|\Big\|_\Psi.
\end{displaymath}
Using the triangle inequality and Lemma \ref{le:mean-Orlicz}, we obtain that
\begin{displaymath}
  \Big\| \sup_{n\le N} \|S_n\|\Big\|_\Psi < C
\end{displaymath}
for some constant $C$, independent of $N$, which by Lebesgue's monotone convergence theorem  allows to conclude $(iii)$ and finish the proof.
\end{proof}

\begin{proof}[Proof of Proposition \ref{prop:boundedness-necessity}] For any $k \in \mathbb N_+$, consider numbers $N_k \in \mathbb N_+, u_k > 0$ which will be specified later on in such a way that $N_k\Psi(u_k) > 1$.

Similarly as in the proof of the first part of Theorem \ref{thm:characterization}, let us define $Y_{i,k}, k \in \mathbb N_+, i \le N_k$ as independent random variables with distribution
\begin{displaymath}
\p (Y_{i,k} = \pm u) = \frac{1}{2N_k\Psi(u_k)}, \;  \p(Y_{i,k}=0)=1 - \frac{1}{N_k\Psi(u_k)}
\end{displaymath}

From the elementary calculations in the proof of the first part of Theorem \ref{thm:characterization} we know that
\begin{align}\label{eq:bound-on-maximum}
\p  (\max_{i \le N_k}|Y_{i,k}| \neq 0) \le \frac{1}{\Psi(u_k)} \textrm{  and } \|\max_{i \le N_k} |Y_{i,k}|\|_{\Psi} \le 1.
\end{align}

Moreover by the reasoning therein we know that if \eqref{eq:nsc} does not hold, we can find sequences $u_k, N_k$, tending to $\infty$ (in particular we may assume that $N_k\Psi(u_k) > 1$), such that
\begin{displaymath}
  \Big\|\sum_{i=1}^{N_k} Y_{i,k}\Big \|_{\Psi} \ge k^3.
\end{displaymath}
We may moreover assume, by passing to a subsequence if necessary, that
\begin{align}\label{eq:convergent-series}
\sum_{k=1}^\infty \frac{1}{\Psi(u_k)} < \infty.
\end{align}

Now define
\begin{displaymath}
Z_{i,k} = \frac{Y_{i,k}}{k^2}, \;  m_k = \sum_{i=1}^{k-1} N_i \textrm{ for } k=1,2,\ldots,
\end{displaymath}
and then
\begin{displaymath}
X_j = Z_{j - m_{k},k}
\end{displaymath}
for $j \in \{m_{k}+1,...,m_{k+1}\}$, $k\ge 1$. It remains to check that $(X_j)_j$ satisfies all the conditions of Proposition \ref{prop:boundedness-necessity}

Firstly, $\p( X_j \neq 0 \textrm{ for infinitely many }j) = \p ( \max_{i \le N_k} Z_{i,k} \neq 0 \textrm{ for infinitely many } k)$. Moreover,
\begin{displaymath}
\sum_{k=1}^\infty \p  (\max_{i \le N_k} Z_{i,k} \neq 0) = \sum_{k=1}^\infty \p  (\max_{i \le N_k} |Y_{i,k}| \neq 0) < \infty
\end{displaymath}
by \eqref{eq:bound-on-maximum} and \eqref{eq:convergent-series}. Hence, by the Borel--Cantelli Lemma, with probability one, only finitely many $X_j$'s are nonzero, and so $(S_n)_n$ converges almost surely.

Further, by the triangle inequality and \eqref{eq:bound-on-maximum},
\begin{displaymath}
\Big\| \sup_n |X_n| \Big\|_{\Psi} \le \sum_{k=1}^\infty \frac{1}{k^2}\Big\|\max_{i \le N_k} |Y_{i,k}| \Big\|_{\Psi} \le \sum_{k=1}^\infty \frac{1}{k^2} < \infty
\end{displaymath} and so $\sup_n |X_n| \in L_\Psi(\mathbb R)$. It remains to note that by Jensen's inequality,
\begin{displaymath}
\Big\|\sum_{j=1}^{m_{k+1}} X_i \Big\|_{\Psi} \ge \Big\| \sum_{j = m_k+1}^{m_{k+1}} X_j \Big\|_{\Psi} = \frac{1}{k^2}\Big\| \sum_{i=1}^{N_k} Y_{i,k} \Big\|_{\Psi} \ge \frac{k^3}{k^2} = k,
\end{displaymath}
which means that $(S_n)_n$ is not bounded in $L_\Psi$.
\end{proof}

\bibliographystyle{amsplain}
\bibliography{HJTal}

\providecommand{\bysame}{\leavevmode\hbox to3em{\hrulefill}\thinspace}
\providecommand{\MR}{\relax\ifhmode\unskip\space\fi MR }
\providecommand{\MRhref}[2]{%
  \href{http://www.ams.org/mathscinet-getitem?mr=#1}{#2}
}
\providecommand{\href}[2]{#2}
\begin{thebibliography}{10}

\bibitem{MR2424985}
Rados{\l}aw Adamczak, \emph{A tail inequality for suprema of unbounded
  empirical processes with applications to {M}arkov chains}, Electron. J.
  Probab. \textbf{13} (2008), no. 34, 1000--1034. \MR{2424985}

\bibitem{MR2591905}
\bysame, \emph{A few remarks on the operator norm of random {T}oeplitz
  matrices}, J. Theoret. Probab. \textbf{23} (2010), no.~1, 85--108.
  \MR{2591905}

\bibitem{MR3423302}
Rados{\l}aw Adamczak and Witold Bednorz, \emph{Exponential concentration
  inequalities for additive functionals of {M}arkov chains}, ESAIM Probab.
  Stat. \textbf{19} (2015), 440--481. \MR{3423302}

\bibitem{MR0576407}
Aloisio Araujo and Evarist Gin\'{e}, \emph{The central limit theorem for real
  and {B}anach valued random variables}, John Wiley \& Sons, New
  York-Chichester-Brisbane, 1980. \MR{576407}

\bibitem{MR2123200}
St\'{e}phane Boucheron, Olivier Bousquet, G\'{a}bor Lugosi, and Pascal Massart,
  \emph{Moment inequalities for functions of independent random variables},
  Ann. Probab. \textbf{33} (2005), no.~2, 514--560. \MR{2123200}

\bibitem{MR3185193}
St\'{e}phane Boucheron, G\'{a}bor Lugosi, and Pascal Massart,
  \emph{Concentration inequalities}, Oxford University Press, Oxford, 2013, A
  nonasymptotic theory of independence, With a foreword by Michel Ledoux.
  \MR{3185193}

\bibitem{MR1890640}
Olivier Bousquet, \emph{A {B}ennett concentration inequality and its
  application to suprema of empirical processes}, C. R. Math. Acad. Sci. Paris
  \textbf{334} (2002), no.~6, 495--500. \MR{1890640}

\bibitem{chamakh:hal-03175697}
Linda Chamakh, Emmanuel Gobet, and Wenjun Liu, \emph{{Orlicz norms and
  concentration inequalities for $\beta$-heavy tailed random variables}},
  hal-03175697, June 2021.

\bibitem{MR4138129}
Linda Chamakh, Emmanuel Gobet, and Zolt\'{a}n Szab\'{o}, \emph{Orlicz random
  {F}ourier features}, J. Mach. Learn. Res. \textbf{21} (2020), Paper No. 145,
  37. \MR{4138129}

\bibitem{MR2434306}
Uwe Einmahl and Deli Li, \emph{Characterization of {LIL} behavior in {B}anach
  space}, Trans. Amer. Math. Soc. \textbf{360} (2008), no.~12, 6677--6693.
  \MR{2434306}

\bibitem{MR1857312}
Evarist Gin\'{e}, Rafa{\l} Lata{\l}a, and Joel Zinn, \emph{Exponential and
  moment inequalities for {$U$}-statistics}, High dimensional probability, {II}
  ({S}eattle, {WA}, 1999), Progr. Probab., vol.~47, Birkh\"{a}user Boston,
  Boston, MA, 2000, pp.~13--38. \MR{1857312}

\bibitem{MR2749436}
Nathael Gozlan, Cyril Roberto, and Paul-Marie Samson, \emph{From concentration
  to logarithmic {S}obolev and {P}oincar\'{e} inequalities}, J. Funct. Anal.
  \textbf{260} (2011), no.~5, 1491--1522. \MR{2749436}

\bibitem{MR3825894}
Nathael Gozlan, Cyril Roberto, Paul-Marie Samson, Yan Shu, and Prasad Tetali,
  \emph{Characterization of a class of weak transport-entropy inequalities on
  the line}, Ann. Inst. Henri Poincar\'{e} Probab. Stat. \textbf{54} (2018),
  no.~3, 1667--1693. \MR{3825894}

\bibitem{MR3706606}
Nathael Gozlan, Cyril Roberto, Paul-Marie Samson, and Prasad Tetali,
  \emph{Kantorovich duality for general transport costs and applications}, J.
  Funct. Anal. \textbf{273} (2017), no.~11, 3327--3405. \MR{3706606}

\bibitem{MR0356155}
J{\o}rgen Hoffmann-J{\o}rgensen, \emph{Sums of independent {B}anach space
  valued random variables}, Studia Math. \textbf{52} (1974), 159--186.
  \MR{356155}

\bibitem{MR4583676}
Han Huang and Konstantin Tikhomirov, \emph{On dimension-dependent concentration
  for convex {L}ipschitz functions in product spaces}, Electron. J. Probab.
  \textbf{28} (2023), Paper No. 63, 23. \MR{4583676}

\bibitem{MR0770640}
W.~B. Johnson, G.~Schechtman, and J.~Zinn, \emph{Best constants in moment
  inequalities for linear combinations of independent and exchangeable random
  variables}, Ann. Probab. \textbf{13} (1985), no.~1, 234--253. \MR{770640}

\bibitem{MR2135312}
T.~Klein and E.~Rio, \emph{Concentration around the mean for maxima of
  empirical processes}, Ann. Probab. \textbf{33} (2005), no.~3, 1060--1077.
  \MR{2135312}

\bibitem{MR4073683}
Yegor Klochkov and Nikita Zhivotovskiy, \emph{Uniform {H}anson--{W}right type
  concentration inequalities for unbounded entries via the entropy method},
  Electron. J. Probab. \textbf{25} (2020), Paper No. 22, 30. \MR{4073683}

\bibitem{MR0126722}
M.~A. Krasnoselskii and Ja.~B. Rutickii, \emph{Convex functions and {O}rlicz
  spaces}, russian ed., P. Noordhoff Ltd., Groningen, 1961. \MR{126722}

\bibitem{MR3263097}
Johannes Lederer and Sara van~de Geer, \emph{New concentration inequalities for
  suprema of empirical processes}, Bernoulli \textbf{20} (2014), no.~4,
  2020--2038. \MR{3263097}

\bibitem{MR1399224}
Michel Ledoux, \emph{On {T}alagrand's deviation inequalities for product
  measures}, ESAIM Probab. Statist. \textbf{1} (1995/97), 63--87. \MR{1399224}

\bibitem{MR1849347}
\bysame, \emph{The concentration of measure phenomenon}, Mathematical Surveys
  and Monographs, vol.~89, American Mathematical Society, Providence, RI, 2001.
  \MR{1849347}

\bibitem{MR1102015}
Michel Ledoux and Michel Talagrand, \emph{Probability in {B}anach spaces},
  Ergebnisse der Mathematik und ihrer Grenzgebiete (3) [Results in Mathematics
  and Related Areas (3)], vol.~23, Springer-Verlag, Berlin, 1991, Isoperimetry
  and processes. \MR{1102015}

\bibitem{MR1782276}
Pascal Massart, \emph{About the constants in {T}alagrand's concentration
  inequalities for empirical processes}, Ann. Probab. \textbf{28} (2000),
  no.~2, 863--884. \MR{1782276}

\bibitem{MR2391154}
Shahar Mendelson and Nicole Tomczak-Jaegermann, \emph{A subgaussian embedding
  theorem}, Israel J. Math. \textbf{164} (2008), 349--364. \MR{2391154}

\bibitem{MR1113700}
M.~M. Rao and Z.~D. Ren, \emph{Theory of {O}rlicz spaces}, Monographs and
  Textbooks in Pure and Applied Mathematics, vol. 146, Marcel Dekker, Inc., New
  York, 1991. \MR{1113700}

\bibitem{10.1007/978-3-031-26979-0_7}
Holger Sambale, \emph{Some notes on concentration for $\alpha$-subexponential
  random variables}, High Dimensional Probability IX (Cham) (Rados{\l}aw
  Adamczak, Nathael Gozlan, Karim Lounici, and Mokshay Madiman, eds.), Springer
  International Publishing, 2023, pp.~167--192.

\bibitem{MR1756011}
Paul-Marie Samson, \emph{Concentration of measure inequalities for {M}arkov
  chains and {$\Phi$}-mixing processes}, Ann. Probab. \textbf{28} (2000),
  no.~1, 416--461. \MR{1756011}

\bibitem{MR0430667}
S.~J. Szarek, \emph{On the best constants in the {K}hinchin inequality}, Studia
  Math. \textbf{58} (1976), no.~2, 197--208. \MR{430667}

\bibitem{MR1048946}
Michel Talagrand, \emph{Isoperimetry and integrability of the sum of
  independent {B}anach-space valued random variables}, Ann. Probab. \textbf{17}
  (1989), no.~4, 1546--1570. \MR{1048946}

\bibitem{MR1361756}
\bysame, \emph{Concentration of measure and isoperimetric inequalities in
  product spaces}, Inst. Hautes \'{E}tudes Sci. Publ. Math. (1995), no.~81,
  73--205. \MR{1361756}

\bibitem{MR1419006}
\bysame, \emph{New concentration inequalities in product spaces}, Invent. Math.
  \textbf{126} (1996), no.~3, 505--563. \MR{1419006}

\bibitem{MR1387624}
\bysame, \emph{A new look at independence}, Ann. Probab. \textbf{24} (1996),
  no.~1, 1--34. \MR{1387624}

\bibitem{MR3101846}
Sara van~de Geer and Johannes Lederer, \emph{The {B}ernstein-{O}rlicz norm and
  deviation inequalities}, Probab. Theory Related Fields \textbf{157} (2013),
  no.~1-2, 225--250. \MR{3101846}

\bibitem{MR1385671}
Aad~W. van~der Vaart and Jon~A. Wellner, \emph{Weak convergence and empirical
  processes}, Springer Series in Statistics, Springer-Verlag, New York, 1996,
  With applications to statistics. \MR{1385671}

\bibitem{MR3707425}
Jon~A. Wellner, \emph{The {B}ennett--{O}rlicz norm}, Sankhya A \textbf{79}
  (2017), no.~2, 355--383. \MR{3707425}

\bibitem{MR1464686}
Klaus Ziegler, \emph{On {H}offmann-{J}{\o}rgensen-type inequalities for outer
  expectations with applications}, Results Math. \textbf{32} (1997), no.~1-2,
  179--192. \MR{1464686}

\end{thebibliography}
\end{document}